\title{Blow-up vs.~global existence for a\break Fujita-type Heat exchanger system} 
\date{\monthyeardate\today} 
\author[1]{\href{https://www.samueltreton.fr/english/}{Samuel {\sc Tréton}}}
\affil[1]{\footnotesize%
	Univ Rouen Normandie, %
	CNRS UMR 6085, %
	\href{https://lmrs.univ-rouen.fr/en}
	{Laboratoire de Mathématiques Raphaël Salem (LMRS)},\break%
	Avenue de l'Université, %
	BP 12, %
	F-76801 %
	Saint-Étienne-du-Rouvray, %
	France.
	}
\begin{document}

\maketitle

\begin{abstract} 
We analyze a reaction-diffusion system on $\mathbb{R}^{N}$
which models the dispersal of individuals between two exchanging environments for its diffusive component
and incorporates a Fujita-type growth for its reactive component.
The originality of this model lies in the coupling of the equations through diffusion, which, to the best of our knowledge, has not been studied in Fujita-type problems.
We first consider the underlying diffusive problem,
demonstrating that the solutions converge exponentially fast towards those of two uncoupled equations, featuring a dispersive operator that is somehow a combination of Laplacians.
By subsequently adding Fujita-type reaction terms to recover the entire system, we identify the critical exponent that separates systematic blow-up from possible global existence.
\end{abstract}

\vspace{3pt}

\noindent{\textbf{Key words:} %
		reaction-diffusion system, %
		Fujita blow-up phenomena, %
		critical exponent, %
		global solutions, %
		Heat exchanger system.%
		}

\vspace{3pt}

\noindent{\textbf{MSC2020:} %
		\pdftooltip{35K40}{PDEs/Parabolic/Second order parabolic systems},
		\pdftooltip{35B44}{PDEs/Qualitative properties/Blow-up},
		\pdftooltip{35B33}{PDEs/Qualitative properties/Critical exponent},
		\pdftooltip{92D25}{Biology and other natural sciences/Population dynamics (general)}.
		}

\tableofcontents

\section{Introduction}\label{S1_intro}

In this work, we consider the semi-linear Cauchy problem
\renewcommand{\gap}{2.155mm}
\begin{equation}\label{SYS_heat_exchanger}
\left\lbrace \begin{array}{llll}
	\partial_{t}u = c\Delta u - \mu u + \nu v + \hspace{\gap}u^{1+\p}, & \qquad &
	t>0, & x\in\mathbb{R}^{N},\\
	\partial_{t}v = d\Delta v + \mu u - \nu v + \kappa v^{1+\q}, & \qquad &
	t>0, & x\in\mathbb{R}^{N},\\
	\prth{u,v}|_{t=0} = \prth{u_{0},v_{0}}, & \qquad &&
	x\in\mathbb{R}^{N},\\\end{array} \right.
\end{equation}
in any dimension $N\geq 1$. Here the parameters $c,d,\mu,\nu,\p,\q$ are positive constants,
$$
\kappa = 0
\qquad
\text{or}
\qquad 
\kappa = 1,
$$
and the data $u_{0}$ and $v_{0}$ are taken non-both-trivial and non-negative in
$L^{1}\prth{\mathbb{R}^{N}}\cap L^{\infty}\prth{\mathbb{R}^{N}}$.
This system is motivated by biological issues, particularly the field-road reaction-diffusion system
---
see
\reff{SYS_field_road_diff}{%
$\left\lbrace \begin{array}{l}
	\partial_{t}\v = d\Delta \v,\\
	-d\partial_{y}\v|_{y=0} = \mu\u - \nu\v|_{y=0},\\
	\partial_{t}\u = D\Delta \u - \mu\u + \nu\v|_{y=0}.
\end{array}\right.$
}
---
for which no Fujita-type result has been established yet.

It is well-known that
\reff{SYS_heat_exchanger}{%
\renewcommand{\gap}{2.155mm}
$
\left\lbrace \begin{array}{l}
	\partial_{t}u = c\Delta u - \mu u + \nu v + \hspace{\gap}u^{1+\p},\\
	\partial_{t}v = d\Delta v + \mu u - \nu v + \kappa v^{1+\q},\\
	\prth{u,v}|_{t=0} = \prth{u_{0},v_{0}}.
\end{array} \right.
$
}
owns a unique maximal solution
$$
\prth{u,v} \in \prth{C^{1}\prth{\intervalleoo{0}{T},L^{1}\prth{\mathbb{R}^{N}}\cap L^{\infty}\prth{\mathbb{R}^{N}}}}^{2},
$$
such that
$\prth{u, v}$
converges to
$\prth{u_{0}, v_{0}}$
in
$\prth{L^{1}\prth{\mathbb{R}^{N}}}^{2}$
as $t\to 0$.
Furthermore, by parabolic regularity,
$\prth{u,v}$
also belongs to
$\prth{C^{\infty}\prth{\prth{0,T}\times \mathbb{R}^{N}}}^{2}$.
Importantly, the cooperative nature of the system ensures the validity of the comparison principle.
These results are classical and can be found in
\citeee{WeisslerLocal80},
\citeee{FriedmanPartial64}
and
\citeee{FifeComparison81}
for instance.
The main objective of this paper is to determine whether $T$ is finite or not.
If $T=\infty$ we refer to $\prth{u,v}$ as a global solution, while $T<\infty$ enforces
\begin{equation}\label{BLOW_UP_L_infty}
\lim\limits_{t \rightarrow T^{-}}
\Prth{
\vertii{u\prth{t}}{L^{\infty}\prth{\mathbb{R}^{N}}} +
\vertii{v\prth{t}}{L^{\infty}\prth{\mathbb{R}^{N}}}
}
= \infty,
\end{equation}
and we say that $\prth{u,v}$ blows-up in finite time.

\bigskip

Back in $1966$, Fujita conducted the pioneering work
\citeee{FujitaBlowing66}
on the initial value problem
\begin{equation}\label{EQ_Fujita}
\left\lbrace \begin{array}{llll}
	\partial_{t}u = \Delta u + u^{1+\p}, & \qquad &
	t>0, & x\in\mathbb{R}^{N},\\
	u|_{t=0} = u_{0}, & \qquad &&
	x\in\mathbb{R}^{N}.
\end{array} \right.
\end{equation}
He showed that for non-negative and non-trivial datum $u_{0}$, there is a critical exponent $\p_{F} : = 2/N$ (later referred to as the Fujita exponent) that separates systematic blow-up of the solutions from possible global existence.
More precisely, if $0<\p<\p_{F}$, any solution to
\reff{EQ_Fujita}{%
$\left\lbrace \begin{array}{l}
	\partial_{t}u = \Delta u + u^{1+\p},\\
	u|_{t=0} = u_{0}.
\end{array} \right.$}
blows-up in finite time.
Conversely, if $\p>\p_{F}$,
problem
\reff{EQ_Fujita}{%
$\left\lbrace \begin{array}{l}
	\partial_{t}u = \Delta u + u^{1+\p},\\
	u|_{t=0} = u_{0}.
\end{array} \right.$}
admits global solutions
as long as $u_{0}$ is chosen \glmt{sufficiently small}.
Additionally, the global solutions provided by this case
asymptotically vanish in
$L^{\infty}\prth{\mathbb{R}^{N}}$.
The critical case $\p=\p_{F}$ has subsequently been investigated in various works, including
\citeee{HayakawaNonexistence73},
\citeee{KobayashiGrowing77},
\citeee{AronsonMultidimensional78},
\citeee{WeisslerExistence81},
and falls within the systematic blow-up regime.
%

The regime transition that occurs at $1/\p=N/2$ can intuitively be explained by observing that the equation in
\reff{EQ_Fujita}{%
$\left\lbrace \begin{array}{l}
	\partial_{t}u = \Delta u + u^{1+\p},\\
	u|_{t=0} = u_{0}.
\end{array} \right.$}
is a combination of the growth phenomenon resulting from the non-linearity and of the dispersive effect arising from the Laplacian.
The growth of the solutions to
\reff{EQ_Fujita}{%
$\left\lbrace \begin{array}{l}
	\partial_{t}u = \Delta u + u^{1+\p},\\
	u|_{t=0} = u_{0}.
\end{array} \right.$}
is related to the underlying ODE
$U'=U^{1+\p}$
whose solutions have the form
$U\prth{t} = C/\prth{T-t}^{1/\p}$
and blow-up with \textit{algebraic blow-up rate} $1/\p$.
On the other hand, solutions to the Heat equation
$\partial_{t} \u = \Delta \u$
vanish for integrable initial datum,
and the sharp uniform control
$\vertii{\u\prth{t}}{L^{\infty}\prth{\mathbb{R}^{N}}}\leq C/t^{N/2}$
indicates that this vanishing occurs with \textit{algebraic decay rate} $N/2$.
By formally combining the growth and diffusive actions into
\reff{EQ_Fujita}{%
$\left\lbrace \begin{array}{l}
	\partial_{t}u = \Delta u + u^{1+\p},\\
	u|_{t=0} = u_{0}.
\end{array} \right.$},
Fujita's result states that
global solutions can only exist if the diffusion decay rate exceeds the growth blow-up rate, which is $N/2 > 1/\p$.
Otherwise the dispersive effect of the diffusion is never strong enough to prevent the blowing-up driven by the reaction term.

Since the critical case was classified, many works have shown significant interest in
\reff{EQ_Fujita}{%
$\left\lbrace \begin{array}{l}
	\partial_{t}u = \Delta u + u^{1+\p},\\
	u|_{t=0} = u_{0}.
\end{array} \right.$},
either by deepening the established results, or by exploring variations of the problem or additional effects.
A wide range of references can be found in the survey articles by
Levine
\citeee{LevineRole90}
and
Deng and Levine
\citeee{DengRole00},
as well as in the books by
Hu
\citeee{HuBlowup11}
and
Quittner and Souplet
\citeee{QuittnerSuperlinear19}.

The first variation of
\reff{EQ_Fujita}{%
$\left\lbrace \begin{array}{l}
	\partial_{t}u = \Delta u + u^{1+\p},\\
	u|_{t=0} = u_{0}.
\end{array} \right.$}
that takes the form of a system appeared in $1991$ in the work of Escobedo and Herrero
\citeee{EscobedoBoundedness91}.
In their study, they considered
\begin{equation}\label{SYS_Escobedo}
\left\lbrace \begin{array}{llll}
	\partial_{t}u = \Delta u + v^{1+\p}, & \qquad &
	t>0, & x\in\mathbb{R}^{N},\\
	\partial_{t}v = \Delta v + u^{1+\q}, & \qquad &
	t>0, & x\in\mathbb{R}^{N}.\\
\end{array} \right.
\end{equation}
Similarly to the Fujita's problem
\reff{EQ_Fujita}{%
$\left\lbrace \begin{array}{l}
	\partial_{t}u = \Delta u + u^{1+\p},\\
	u|_{t=0} = u_{0}.
\end{array} \right.$},
the system
\reff{SYS_Escobedo}{%
$\left\lbrace \begin{array}{l}
	\partial_{t}u = \Delta u + v^{1+\p},\\
	\partial_{t}v = \Delta v + u^{1+\q}.
\end{array} \right.$
}
may be interpreted as the result of the interplay between a growth phenomenon described by the coupled ODE system
\begin{equation}\label{SYS_Escobedo_growth}
\left\lbrace \begin{array}{lll}
	U' = V^{1+\p}, & \qquad & t>0,\\
	V' = U^{1+\q}, & \qquad & t>0,\\
\end{array} \right.
\end{equation}
and an uncoupled diffusion process governed by
$$
\begin{array}{llll}
	\partial_{t}\u = \Delta \u, & \qquad &
	t>0, & x\in\mathbb{R}^{N},\\[2mm]
	\partial_{t}\v = \Delta \v, & \qquad &
	t>0, & x\in\mathbb{R}^{N}.\\
\end{array}
$$
The results presented in
\citeee{EscobedoBoundedness91}
highlight that the transition
from systematic blow-up from possible global existence
is once again determined by the balance between the growth blow-up rate and the diffusion decay rate.
Specifically, for non-negative and non-both-trivial initial values
$U_{0}, V_{0}$,
both components of the solution $\prth{U,V}$ to
\reff{SYS_Escobedo_growth}{%
$\left\lbrace \begin{array}{l}
	U' = V^{1+\p},\\
	V' = U^{1+\q}.
\end{array} \right.$
}
blows-up in finite time with the blow-up rates
$$
a : = \frac{2+\p}{\p+\q+\p\q}\;\;\, \text{for }U,
\qquad\quad
\text{and}
\qquad\quad
b  : = \frac{2+\q}{\p+\q+\p\q}\;\;\, \text{for }V.
$$
Consequently, for system
\reff{SYS_Escobedo}{%
$\left\lbrace \begin{array}{l}
	\partial_{t}u = \Delta u + v^{1+\p},\\
	\partial_{t}v = \Delta v + u^{1+\q}.
\end{array} \right.$
}
to have global solutions, $N/2$ must be larger than both $a$ and $b$, while systematic blow-up occurs as soon as $a$ or $b$ exceeds $N/2$.

The work of Escobedo and Herrero have been extended in several ways.
First, the introduction of different diffusion coefficients
has been investigated in
\citeee{FilaFujitatype94}
by Fila, Levine and Uda.
Although they reached the same conclusions as in the case of identical diffusions,
it should be noted that considering different diffusion rates significantly increases the complexity of the problem.
This draws attention to one of the key challenges involved in working with system
\reff{SYS_heat_exchanger}{%
\renewcommand{\gap}{2.155mm}
$
\left\lbrace \begin{array}{l}
	\partial_{t}u = c\Delta u - \mu u + \nu v + \hspace{\gap}u^{1+\p},\\
	\partial_{t}v = d\Delta v + \mu u - \nu v + \kappa v^{1+\q},\\
	\prth{u,v}|_{t=0} = \prth{u_{0},v_{0}}.
\end{array} \right.
$
}
which is discussed in more details in Section \ref{S2_main_results}.
More recently, in
\citeee{YangFujitatype15},
problem
\reff{SYS_Escobedo}{%
$\left\lbrace \begin{array}{l}
	\partial_{t}u = \Delta u + v^{1+\p},\\
	\partial_{t}v = \Delta v + u^{1+\q}.
\end{array} \right.$
}
has also been studied by taking some compactly supported non-local diffusion operators instead of the Laplacians.
As for the non-linear effects,
the case of time-weighted reactions
$\prth{f\prth{t}v^{1+\p},g\prth{t}u^{1+\q}}$
was tackled in
\citeee{UdaCritical95},
\citeee{CaoCritical14},
\citeee{CastilloCritical15}, 
and that of space-weighted reactions
$\prth{\verti{x}^{\sigma_{1}}v^{1+\p},\verti{x}^{\sigma_{2}}u^{1+\q}}$
was discussed in
\citeee{MochizukiExistence98}.
An expansion of
\reff{SYS_Escobedo}{%
$\left\lbrace \begin{array}{l}
	\partial_{t}u = \Delta u + v^{1+\p},\\
	\partial_{t}v = \Delta v + u^{1+\q}.
\end{array} \right.$
}
with a \glmt{chain coupling} of more than two unknowns was also handled in
\citeee{RenclawowiczGlobal00}.

Various other types of systems have been developed based on Fujita's problem
\reff{EQ_Fujita}{%
$\left\lbrace \begin{array}{l}
	\partial_{t}u = \Delta u + u^{1+\p},\\
	u|_{t=0} = u_{0}.
\end{array} \right.$}.
We mention for instance systems for which the reaction terms take the form
$\prth{u^{\alpha}v^{\beta},u^{\gamma}v^{\delta}}$
studied in
\citeee{QiGlobal94},
\citeee{EscobedoCritical95},
\citeee{LuGlobal95},
\citeee{SnoussiGlobal02},
\citeee{DicksteinLife07}
or
$\prth{u^{\alpha}+v^{\beta},u^{\gamma}+v^{\delta}}$
considered in
\citeee{CuiGlobal98}, 
\citeee{SoupletOptimal04}, 
\citeee{CastilloCritical15}. 

\bigskip

We would like to stress that in all the semi-linear systems mentioned above, the interaction between the unknowns occurs through the non-linear growth terms. However, this is not the case for problem
\reff{SYS_heat_exchanger}{%
\renewcommand{\gap}{2.155mm}
$
\left\lbrace \begin{array}{l}
	\partial_{t}u = c\Delta u - \mu u + \nu v + \hspace{\gap}u^{1+\p},\\
	\partial_{t}v = d\Delta v + \mu u - \nu v + \kappa v^{1+\q},\\
	\prth{u,v}|_{t=0} = \prth{u_{0},v_{0}}.
\end{array} \right.
$
}
investigated in this paper. Indeed, as for
\reff{EQ_Fujita}{%
$\left\lbrace \begin{array}{l}
	\partial_{t}u = \Delta u + u^{1+\p},\\
	u|_{t=0} = u_{0}.
\end{array} \right.$}
and
\reff{SYS_Escobedo}{%
$\left\lbrace \begin{array}{l}
	\partial_{t}u = \Delta u + v^{1+\p},\\
	\partial_{t}v = \Delta v + u^{1+\q}.
\end{array} \right.$
},
we can extract from
\reff{SYS_heat_exchanger}{%
\renewcommand{\gap}{2.155mm}
$
\left\lbrace \begin{array}{l}
	\partial_{t}u = c\Delta u - \mu u + \nu v + \hspace{\gap}u^{1+\p},\\
	\partial_{t}v = d\Delta v + \mu u - \nu v + \kappa v^{1+\q},\\
	\prth{u,v}|_{t=0} = \prth{u_{0},v_{0}}.
\end{array} \right.
$
}
a \glmt{diffusive component}:
\begin{equation}\label{SYS_heat_exchanger_diff}
\left\lbrace
\begin{array}{llll}
	\partial_{t}\u = c\Delta \u - \mu\u + \nu\v, & \qquad &
	t>0, & x\in\mathbb{R}^{N},\\
	\partial_{t}\v = d\Delta \v + \mu \u - \nu\v, & \qquad &
	t>0, & x\in\mathbb{R}^{N},\\
\end{array}
\right.
\end{equation}
that we call \textit{Heat exchanger} and which contains all the coupling parts of
\reff{SYS_heat_exchanger}{%
\renewcommand{\gap}{2.155mm}
$
\left\lbrace \begin{array}{l}
	\partial_{t}u = c\Delta u - \mu u + \nu v + \hspace{\gap}u^{1+\p},\\
	\partial_{t}v = d\Delta v + \mu u - \nu v + \kappa v^{1+\q},\\
	\prth{u,v}|_{t=0} = \prth{u_{0},v_{0}}.
\end{array} \right.
$
}.
We interpret system
\reff{SYS_heat_exchanger_diff}{%
$\left\lbrace \begin{array}{l}
	\partial_{t}\u = c\Delta \u - \mu \u + \nu\v,\\
	\partial_{t}\v = d\Delta \v + \mu \u - \nu\v.
\end{array}\right.$
}
as a population dynamics model for a single species dispersing on two parallel environments
and switching from one to the other.
Observe that
\reff{SYS_heat_exchanger_diff}{%
$\left\lbrace \begin{array}{l}
	\partial_{t}\u = c\Delta \u - \mu \u + \nu\v,\\
	\partial_{t}\v = d\Delta \v + \mu \u - \nu\v.
\end{array}\right.$
}
preserves the mass of its initial datum over time, that is
\begin{equation}\label{EQ_mass_preseved}
\vertii{\u\prth{t} + \v\prth{t}}{L^{1}\prth{\mathbb{R}^{N}}}
\equiv
\vertii{\u\prth{0} + \v\prth{0}}{L^{1}\prth{\mathbb{R}^{N}}},
\qquad
\text{for all }t>0.
\end{equation}
A detailed description of solutions to
\reff{SYS_heat_exchanger_diff}{%
$\left\lbrace \begin{array}{l}
	\partial_{t}\u = c\Delta \u - \mu \u + \nu\v,\\
	\partial_{t}\v = d\Delta \v + \mu \u - \nu\v.
\end{array} \right.$
}
is given in Theorem
\ref{TH_asymptotic_linear_Heat_exchanger}, Corollary \ref{CORO_decay_linear_Heat_exchanger} and
their surrounding comments.
These results are necessary preliminaries to address the systematic blow-up versus possible global existence for the problem
\reff{SYS_heat_exchanger}{%
\renewcommand{\gap}{2.155mm}
$
\left\lbrace \begin{array}{l}
	\partial_{t}u = c\Delta u - \mu u + \nu v + \hspace{\gap}u^{1+\p},\\
	\partial_{t}v = d\Delta v + \mu u - \nu v + \kappa v^{1+\q},\\
	\prth{u,v}|_{t=0} = \prth{u_{0},v_{0}}.
\end{array} \right.
$
}.

Such a model which exchanges individuals between two domains is not new in the literature.
In particular, the linear Heat exchanger system
\reff{SYS_heat_exchanger_diff}{%
$\left\lbrace \begin{array}{l}
	\partial_{t}\u = c\Delta \u - \mu \u + \nu\v,\\
	\partial_{t}\v = d\Delta \v + \mu \u - \nu\v.
\end{array}\right.$
}
is influenced by the field-road diffusive model
\begin{equation}\label{SYS_field_road_diff}
\left\lbrace
\begin{array}{lllll}
	\partial_{t}\v = d\Delta \v, & \qquad &
	t>0, & x\in\mathbb{R}^{N-1}, & y>0,\\
	-d\partial_{y}\v|_{y=0} = \mu\u - \nu\v|_{y=0}, & \qquad &
	t>0, & x\in\mathbb{R}^{N-1},\\
	\partial_{t}\u = D\Delta \u - \mu\u + \nu\v|_{y=0}, & \qquad &
	t>0, & x\in\mathbb{R}^{N-1},\\
\end{array}
\right.
\end{equation}
where
$\prth{\v,\u}=\prth{\v\prth{t,x,y},\u\prth{t,x}}$,
introduced in $2013$ by Berestycki, Roquejoffre and Rossi in
\citeee{BerestyckiInfluence13}
and whom both fundamental solutions and asymptotic decay rate were recently studied in
\citeee{AlfaroFieldroad23}.
Traditionally, the field-road diffusive model is supplemented with a reaction term $f$ in the field which is represented by the first line of
\reff{SYS_field_road_diff}{%
$\left\lbrace \begin{array}{l}
	\partial_{t}\v = d\Delta \v,\\
	-d\partial_{y}\v|_{y=0} = \mu\u - \nu\v|_{y=0},\\
	\partial_{t}\u = D\Delta \u - \mu\u + \nu\v|_{y=0}.
\end{array}\right.$
}.
As far as we know, no Fujita-type reaction $f$ such as $f\prth{v} = v^{1+\p}$ has been considered in the literature.
In this case
(as with system
\reff{SYS_heat_exchanger}{%
\renewcommand{\gap}{2.155mm}
$
\left\lbrace \begin{array}{l}
	\partial_{t}u = c\Delta u - \mu u + \nu v + \hspace{\gap}u^{1+\p},\\
	\partial_{t}v = d\Delta v + \mu u - \nu v + \kappa v^{1+\q},\\
	\prth{u,v}|_{t=0} = \prth{u_{0},v_{0}}.
\end{array} \right.
$
}),
it is worth noting that
the equations are coupled through the diffusion process.
In this paper, we therefore propose to study problem
\reff{SYS_heat_exchanger}{%
\renewcommand{\gap}{2.155mm}
$
\left\lbrace \begin{array}{l}
	\partial_{t}u = c\Delta u - \mu u + \nu v + \hspace{\gap}u^{1+\p},\\
	\partial_{t}v = d\Delta v + \mu u - \nu v + \kappa v^{1+\q},\\
	\prth{u,v}|_{t=0} = \prth{u_{0},v_{0}}.
\end{array} \right.
$
}
as a first step towards investigating these \glmt{coupled by diffusion} Fujita-type systems.

\bigskip

This work is structured as follows.
In Section \ref{S2_main_results}, we start with an introduction of the notations used throughout.
Subsequently, we present our results on the diffusive system
\reff{SYS_heat_exchanger_diff}{%
$\left\lbrace \begin{array}{l}
	\partial_{t}\u = c\Delta \u - \mu \u + \nu\v,\\
	\partial_{t}\v = d\Delta \v + \mu \u - \nu\v.
\end{array}\right.$
}
and the semi-linear problem
\reff{SYS_heat_exchanger}{%
\renewcommand{\gap}{2.155mm}
$
\left\lbrace \begin{array}{l}
	\partial_{t}u = c\Delta u - \mu u + \nu v + \hspace{\gap}u^{1+\p},\\
	\partial_{t}v = d\Delta v + \mu u - \nu v + \kappa v^{1+\q},\\
	\prth{u,v}|_{t=0} = \prth{u_{0},v_{0}}.
\end{array} \right.
$
}
in Subsection \ref{SS2_1_linear} and {Subsection \ref{SS2_2_non_linear}} respectively.
The remaining sections are devoted to providing the proofs for the statements outlined in Section \ref{S2_main_results}.
Specifically, in Section \ref{S3_asymptotic_behaviour_of_the_diffusion} we prove the results related to linear system
\reff{SYS_heat_exchanger_diff}{%
$\left\lbrace\begin{array}{l}
	\partial_{t}\u = c\Delta \u - \mu \u + \nu\v,\\
	\partial_{t}\v = d\Delta \v + \mu \u - \nu\v.
\end{array}\right.$
}.
In Section \ref{S4_possible_global_existence} we address problem
\reff{SYS_heat_exchanger}{%
\renewcommand{\gap}{2.155mm}
$
\left\lbrace \begin{array}{l}
	\partial_{t}u = c\Delta u - \mu u + \nu v + \hspace{\gap}u^{1+\p},\\
	\partial_{t}v = d\Delta v + \mu u - \nu v + \kappa v^{1+\q},\\
	\prth{u,v}|_{t=0} = \prth{u_{0},v_{0}}.
\end{array} \right.
$
}
when global existence is achievable.
Lastly, we discuss the systematic blow-up case in Section \ref{S5_systematic_blow_up}.

\section{Notations and main results}\label{S2_main_results}


The conventions and notations used in this paper are gathered below.

To avoid confusion, we use bold font
(\eg $\u$, $\v$, $\sigmaSR$, $\deltaSR$)
to denote the solutions to diffusive problems, while we use regular font
(\eg $u$, $v$, $\sigma$, $\delta$)
for the solutions to non-linear problems. Additionally, if
$w = w\prth{t,x}$,
the abbreviation $w\prth{t}$ obviously stands for the function $w\prth{t,\point}$.
We also indicate functions that depend solely on time --- such as ODE solutions --- in capital letters (\eg $U$, $V$, $S$, $D$).

In the whole document the function
$G = G\prth{t,x} : = \prth{4\pi t}^{-N/2}e^{-\verti{x}^{2}/4t}$ denotes the Heat kernel in $\mathbb{R}^{N}$ with unit diffusion rate, and $\ast$ is the classical convolution product.
For any subset $\Omega\subset\mathbb{R}^{N}$, we denote $\partial\Omega$ its topological frontier.
In addition, we use $\pazocal{B}\prth{0,R}$ to denote the ball
$\{x\in\mathbb{R}^{N}\text{ such that }\verti{x}<R\}$,
and 
$\indicatrice{\pazocal{B}\prth{0,R}}$
to represent its indicator function.

For any $f \in L^{1}\prth{\mathbb{R}^{N}}$, the Fourier transform we employ is defined by
$$
{\Fb}\crochets{f}\prth{\xi} =
\hat{f}\prth{\xi} : =
\int_{\mathbb{R}^{N}}^{}
f\prth{x}
e^{-i \xi \cdot x}
dx,
$$
for which holds the inversion formula
$f = \prth{2\pi}^{-N}{\Fb}\circ{\Fb}\crochets{f\prth{-\,\point}}$
as soon as $\hatt{f} \in L^{1}\prth{\mathbb{R}^{N}}$
and the Hausdorff-Young inequalities
\begin{equation}\label{INEQUALITY_Hausdorff_Young}
\vertii{f}{L^{\infty}\prth{\mathbb{R}^{N}}} \leq
\prth{2\pi}^{-N}
\vertii{\hat{f}}{L^{1}\prth{\mathbb{R}^{N}}},
\qquad
\text{and}
\qquad
\vertii{\hat{f}}{L^{\infty}\prth{\mathbb{R}^{N}}} \leq
\vertii{f}{L^{1}\prth{\mathbb{R}^{N}}}.
\end{equation}

We adopt the notation $\pazocal{S}\prth{\mathbb{R}^{N}}$ to represent the Schwarz space of rapidly decreasing functions that consists of smooth functions
$\varphi : \mathbb{R}^{N} \to \mathbb{R}$
that, along with all their derivatives, exhibit faster decay than any polynomial as
$\verti{x}\to\infty$.

\subsection{The linear problem \reff{SYS_heat_exchanger_diff}{$\left\lbrace\begin{array}{l}
	\partial_{t}\u = c\Delta \u - \mu \u + \nu\v,\\
	\partial_{t}\v = d\Delta \v + \mu \u - \nu\v.
\end{array}\right.$
}}\label{SS2_1_linear}

To start with, let us consider the Heat exchanger system
\reff{SYS_heat_exchanger_diff}{%
$\left\lbrace\begin{array}{l}
	\partial_{t}\u = c\Delta \u - \mu \u + \nu\v,\\
	\partial_{t}\v = d\Delta \v + \mu \u - \nu\v.
\end{array}\right.$
}
in the case where all the parameters
$c,d,\mu,\nu$ are equal to $1$.
Then the linear transformation
$\prth{\sigmaSR,\deltaSR} : = \prth{\u+\v,\u-\v}$
enables the uncoupling of the unknowns, and thus,
\begin{equation}\label{SYS_heat_exchanger_diff_same_diff_solutions}
\begin{array}{l}
\u\prth{t} =
\dfrac{1+e^{-2t}}{2}
\Prth{G\prth{t} \ast u_{0}} +
\dfrac{1-e^{-2t}}{2}
\Prth{G\prth{t} \ast v_{0}},\\[4mm]
\v\prth{t} =
\dfrac{1-e^{-2t}}{2}
\Prth{G\prth{t} \ast u_{0}} +
\dfrac{1+e^{-2t}}{2}
\Prth{G\prth{t} \ast v_{0}}.
\end{array}
\end{equation}
From
\reff{SYS_heat_exchanger_diff_same_diff_solutions}{%
$\begin{array}{l}
\u\prth{t} =
\dfrac{1+e^{-2t}}{2}
\Prth{G\prth{t} \ast u_{0}} +
\dfrac{1-e^{-2t}}{2}
\Prth{G\prth{t} \ast v_{0}},\\[4mm]
\v\prth{t} =
\dfrac{1-e^{-2t}}{2}
\Prth{G\prth{t} \ast u_{0}} +
\dfrac{1+e^{-2t}}{2}
\Prth{G\prth{t} \ast v_{0}}.
\end{array}$
},
observe that
$\prth{\u,\v}$
separates into an evanescent part
$\prth{\u_{e},\v_{\!e}}$
which decays exponentially fast, and a persistent part
$\prth{\u_{\infty},\v_{\!\infty}}$,
which is the solution to the uncoupled Heat equations
\begin{equation*}
\begin{array}{llll}
	\partial_{t}\u_{\infty} = \Delta \u_{\infty}, & \qquad &
	t>0, & x\in\mathbb{R}^{N},\\[2mm]
	\partial_{t}\v_{\!\infty} = \Delta \v_{\!\infty}, & \qquad &
	t>0, & x\in\mathbb{R}^{N}.\\
\end{array}
\end{equation*}
both starting from the averaged datum
$\prth{u_{0}+v_{0}}/2$.
In particular, it becomes clear that both $\u$ and $\v$ converge to zero with algebraic decay rate $N/2$.
Our first primary contribution is to demonstrate that a similar phenomenon occurs in the general case where $c,d,\mu,\nu$ may differ.

\begin{theorem}[Asymptotic behavior of the solutions to the diffusive Heat exchanger]\label{TH_asymptotic_linear_Heat_exchanger}
Let
$\prth{\u,\v}$
be the solution to system
\reff{SYS_heat_exchanger_diff}{%
$\left\lbrace\begin{array}{l}
	\partial_{t}\u = c\Delta \u - \mu \u + \nu\v,\\
	\partial_{t}\v = d\Delta \v + \mu \u - \nu\v.
\end{array}\right.$
}
supplemented with
non-negative
initial datum $\prth{u_{0},v_{0}}$,
such that
$u_{0}$, $v_{0}$, $\hatt{u}_{0}$, $\hatt{v}_{\skp\skp\skp 0}$ are all in $L^{1}\prth{\mathbb{R}^{N}}$.
Define, for $\xi\in\mathbb{R}^{N}$, the radial functions
\begin{equation}\label{DEF_r_and_s_preuve_th_linear}
r\prth{\xi} : = \frac{c-d}{2}\verti{\xi}^{2} + \frac{\mu-\nu}{2},
\qquad\qquad
s\prth{\xi} : = \mu\nu + \crochets{r\prth{\xi}}^{2},
\end{equation}
\begin{equation}\label{DEF_L}
L\prth{\xi} : =
\sqrt{s\prth{\xi}} - \Prth{\frac{c+d}{2}\verti{\xi}^{2} + \frac{\mu+\nu}{2}},
\end{equation}
and the dispersal operator
$\mathcal{L} : \pazocal{S}\prth{\mathbb{R}^{N}} \to \pazocal{S}\prth{\mathbb{R}^{N}}$ through the relation
\begin{equation}\label{DEF_cal_L}
{\Fb} \crochets{\mathcal{L}f} = L \times {\Fb} \crochets{f},
\qquad 
\text{for all }f \in \pazocal{S}\prth{\mathbb{R}^{N}}.
\end{equation}
Finally, let
$\prth{\u_{\infty},\v_{\!\infty}}$
be the solution to the uncoupled diffusion equations
\renewcommand{\gap}{-3mm}
\begin{equation}\label{SYS_diff_non_loc_uncoupled}
\begin{array}{lllll}
	\partial_{t}\u_{\infty} &\hspace{\gap} = \mathcal{L}\?\u_{\infty}, & \qquad &
	t>0, & x\in\mathbb{R}^{N},\\[2mm]
	\partial_{t}\v_{\!\infty} &\hspace{\gap} = \mathcal{L}\?\v_{\infty}, & \qquad &
	t>0, & x\in\mathbb{R}^{N},\\
\end{array}
\end{equation}
respectively starting from the data
$u_{\infty,0}$ and $v_{\infty,0}$
defined via their Fourier transforms:
\begin{equation}\label{DEF_u0v0_diff_non_loc_uncoupled}
\begin{matrix} 
\hat{u}_{\infty,0} : =
\dfrac{1}{2}
\crochets{
\Prth{1 - \dfrac{r\prth{\xi}}{\sqrt{s\prth{\xi}}}} \hat{u}_{0} +
\dfrac{\nu}{\sqrt{s\prth{\xi}}} \, \hat{v}_{\skp\skp\skp 0}},\\[6mm]
\hat{v}_{\skp\infty,0} : =
\dfrac{1}{2}
\crochets{\dfrac{\mu}{\sqrt{s\prth{\xi}}} \, \hat{u}_{0} +
\Prth{1 + \dfrac{r\prth{\xi}}{\sqrt{s\prth{\xi}}}} \hat{v}_{\skp\skp\skp 0}}. \end{matrix}
\end{equation}
Then there are two
positive constants $k$ and $k'$ which depend on
$N,c,d,\mu,\nu$
such that
\renewcommand{\gap}{-3mm}
\begin{equation}\label{CONTROL_conv_expo_diff_non_loc_uncoupled}
\begin{array}{ll}
\vertii{\u\prth{t}-\u_{\infty}\prth{t}}{L^{\infty}\prth{\mathbb{R}^{N}}}
\leq
k
&\hspace{\gap}\Prth{\vertii{u_{0}}{L^{1}\prth{\mathbb{R}^{N}}}+\vertii{v_{0}}{L^{1}\prth{\mathbb{R}^{N}}}}
e^{-t \frac{\Prth{\sqrt{\mu}+\sqrt{\nu}\hspace{0.3mm}}^{2}}{2}}, \\[3mm]
\vertii{\v\prth{t}-\v_{\!\infty}\prth{t}}{L^{\infty}\prth{\mathbb{R}^{N}}}
\leq
k'
&\hspace{\gap}\Prth{\vertii{u_{0}}{L^{1}\prth{\mathbb{R}^{N}}}+\vertii{v_{0}}{L^{1}\prth{\mathbb{R}^{N}}}}
e^{-t \frac{\Prth{\sqrt{\mu}+\sqrt{\nu}\hspace{0.3mm}}^{2}}{2}},
\end{array}
\end{equation}
for all $t>1$.
\end{theorem}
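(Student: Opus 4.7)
My plan is to Fourier-diagonalise the diffusive Heat exchanger. Applying the Fourier transform to the system turns it into the family of $2\times 2$ linear ODEs
\begin{equation*}
\partial_{t}\begin{pmatrix}\hat{\u}\\\hat{\v}\end{pmatrix}=M(\xi)\begin{pmatrix}\hat{\u}\\\hat{\v}\end{pmatrix},\qquad M(\xi)=\begin{pmatrix}-c\verti{\xi}^{2}-\mu & \nu\\ \mu & -d\verti{\xi}^{2}-\nu\end{pmatrix},
\end{equation*}
parameterised by $\xi\in\mathbb{R}^{N}$. A direct computation yields the two eigenvalues $\lambda_{\pm}(\xi)=-\prth{\frac{c+d}{2}\verti{\xi}^{2}+\frac{\mu+\nu}{2}}\pm\sqrt{s(\xi)}$, with $r$ and $s$ as introduced in the theorem statement. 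In particular $L(\xi)=\lambda_{+}(\xi)$ is exactly the Fourier symbol of the dispersal operator $\mathcal{L}$. The associated spectral projector $P_{+}(\xi)=(M(\xi)-\lambda_{-}(\xi)I)/(2\sqrt{s(\xi)})$ can be written out explicitly, and its entries reduce precisely to the coefficients defining $(\hat{u}_{\infty,0},\hat{v}_{\infty,0})$; hence $(\hat{u}_{\infty,0},\hat{v}_{\infty,0})=P_{+}(\xi)(\hat{u}_{0},\hat{v}_{0})$. Setting $P_{-}=I-P_{+}$, the persistent and evanescent parts therefore split on the Fourier side as
\begin{equation*}
(\hat{\u}_{\infty},\hat{\v}_{\!\infty})(t,\xi)=e^{t\lambda_{+}(\xi)}P_{+}(\xi)(\hat{u}_{0},\hat{v}_{0}),\qquad(\hat{\u}-\hat{\u}_{\infty},\hat{\v}-\hat{\v}_{\!\infty})(t,\xi)=e^{t\lambda_{-}(\xi)}P_{-}(\xi)(\hat{u}_{0},\hat{v}_{0}).
\end{equation*}

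To obtain the claimed $L^{\infty}$ estimate, I would combine both Hausdorff-Young inequalities. The pointwise bound $\verti{\hat{u}_{0}(\xi)}\leq\vertii{u_{0}}{L^{1}\prth{\mathbb{R}^{N}}}$, together with the uniform-in-$\xi$ boundedness of the entries of $P_{-}(\xi)$ --- coming from the elementary inequalities $\verti{r}/\sqrt{s}\leq 1$ and $\mu/\sqrt{s},\,\nu/\sqrt{s}\leq\max(\sqrt{\mu/\nu},\sqrt{\nu/\mu})$, both stemming from $s\geq\mu\nu$ --- shows that the first component of $e^{t\lambda_{-}(\xi)}P_{-}(\xi)(\hat{u}_{0},\hat{v}_{0})$ is pointwise dominated by $C_{\mu,\nu}\prth{\vertii{u_{0}}{L^{1}}+\vertii{v_{0}}{L^{1}}}\,e^{t\lambda_{-}(\xi)}$. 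The $L^{\infty}$-$L^{1}$ Hausdorff-Young inequality then reduces the problem to estimating $\int_{\mathbb{R}^{N}}e^{t\lambda_{-}(\xi)}\,d\xi$.

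The heart of the argument is the algebraic identity
\begin{equation*}
\lambda_{-}(\xi)+\frac{\prth{\sqrt{\mu}+\sqrt{\nu}}^{2}}{2}=-\frac{c+d}{2}\verti{\xi}^{2}-\prth{\sqrt{s(\xi)}-\sqrt{\mu\nu}},
\end{equation*}
which, together with $\sqrt{s(\xi)}\geq\sqrt{\mu\nu}$ (since $s=\mu\nu+r^{2}$), yields the uniform upper bound $\lambda_{-}(\xi)\leq -\prth{\sqrt{\mu}+\sqrt{\nu}}^{2}/2-(c+d)\verti{\xi}^{2}/2$. A Gaussian integration then produces a factor $(2\pi/(t(c+d)))^{N/2}$, which is bounded by $(2\pi/(c+d))^{N/2}$ for $t>1$, and the announced bound on $\u-\u_{\infty}$ follows; the bound on $\v-\v_{\!\infty}$ is obtained symmetrically. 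The main subtlety lies in identifying the correct exponential rate: the pointwise-sharper value $-\lambda_{-}(0)=\mu+\nu$ is not uniformly extractable while preserving Gaussian integrability, because $\sqrt{s(\xi)}-(\mu+\nu)/2$ turns negative at the zeros of $r$ (where $\sqrt{s}=\sqrt{\mu\nu}\leq(\mu+\nu)/2$ by AM--GM). The rate $\prth{\sqrt{\mu}+\sqrt{\nu}}^{2}/2$ is precisely the largest one for which $\lambda_{-}(\xi)+\prth{\sqrt{\mu}+\sqrt{\nu}}^{2}/2+(c+d)\verti{\xi}^{2}/2\leq 0$ holds uniformly in $\xi\in\mathbb{R}^{N}$.
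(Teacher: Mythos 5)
Your proposal is correct and follows essentially the same route as the paper: Fourier diagonalisation of the $2\times2$ symbol, the splitting into $e^{t\lambda_{+}}P_{+}$ and $e^{t\lambda_{-}}P_{-}$ parts (your spectral projector $P_{+}=(M-\lambda_{-}I)/(2\sqrt{s})$ reproduces exactly the matrices the paper writes out entrywise), the uniform bound $\lambda_{-}\leq-\prth{\sqrt{\mu}+\sqrt{\nu}}^{2}/2-(c+d)\verti{\xi}^{2}/2$ via $\sqrt{s}\geq\sqrt{\mu\nu}$, and the $L^{1}_{\xi}$/Gaussian-integral estimate converted to $L^{\infty}_{x}$ by Hausdorff--Young. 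Your closing observation on the sharpness of the rate $\prth{\sqrt{\mu}+\sqrt{\nu}}^{2}/2$ is a nice addition not made explicit in the paper.
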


\begin{remarks}[on Theorem \ref{TH_asymptotic_linear_Heat_exchanger}]
~\\[-0.7cm]
\begin{itemize}[label=\textbullet , leftmargin=0mm]
\item[]\hspace{-0.5mm}\textbullet\,\textit{The operator $\mathcal{L}$ ---
\reff{DEF_L}{%
$L\prth{\xi} : =
\sqrt{s\prth{\xi}} - \Prth{\frac{c+d}{2}\verti{\xi}^{2} + \frac{\mu+\nu}{2}}.$
}-\reff{DEF_cal_L}{%
${\Fb} \crochets{\mathcal{L}f} = L \times {\Fb} \crochets{f},
\qquad 
\text{for all }f \in \pazocal{S}\prth{\mathbb{R}^{N}}.$
}.}
By observing the behavior of $L$ at high and low frequencies, we can discern similarities between $\mathcal{L}$ and the Laplacian.
Specifically, recalling the identity
\begin{equation*}
{\Fb} \crochets{c \Delta f} = -c\verti{\xi}^{2} \times {\Fb} \crochets{f},
\qquad 
\text{for all }f \in \pazocal{S}\prth{\mathbb{R}^{N}},
\end{equation*} 
an analysis of $L$ near the null frequency and as
$\verti{\xi}$ approaches infinity
reveals that $\mathcal{L}$ behaves like
$\prth{c\nu+d\mu}\prth{\mu+\nu}^{-1}\Delta$
at low frequencies and
$\min\prth{c,d}\Delta$
at high frequencies.
Note also that $L$ becomes independent on the exchange parameters $\mu$ and $\nu$ when the diffusion rates $c$ and $d$ are equal.
In this case, $\mathcal{L} \equiv c\Delta \equiv d\Delta$.

\hspace*{\myindent}Figure \ref{FIG_L} below presents radial profiles of $L$ for various combinations of the parameters
$c,d,\mu,\nu$.
The first line is the simple case $c=d=\mu=\nu=1$, for which
$L\prth{\xi} = -\verti{\xi}^{2}$.
Upon examining the different profiles of $L$ in Figure \ref{FIG_L}, we observe two distinct types of shapes that depend on how $L$ changes its behavior from zero to infinity.
More precisely,
we see
\glmt{soft transitions} in the first and second lines of the table, and \glmt{sharp transitions} in the third and last lines.
These differences in the shapes of the decay may indicate varying degrees of homogeneity/heterogeneity in how $\u$ and $\v$ interact with individuals in their respective environments.
Indeed, recalling that $\mu$ (resp. $\nu$) represents the outgoing exchange rate of $\u$ (resp. $\v$), we can observe that in the sharp transitions cases only, one of the two unknowns has both a large diffusion rate and an outgoing exchange rate less than or equal to the ingoing one.
In this way, the strongest diffuser that primarily conserves its individuals mostly affects the low frequencies of $L$, its contribution quickly vanishing at high frequencies.
\newpage
\refstepcounter{FIGURE}\label{FIG_L}
\begin{center}
\includegraphics[scale=1]{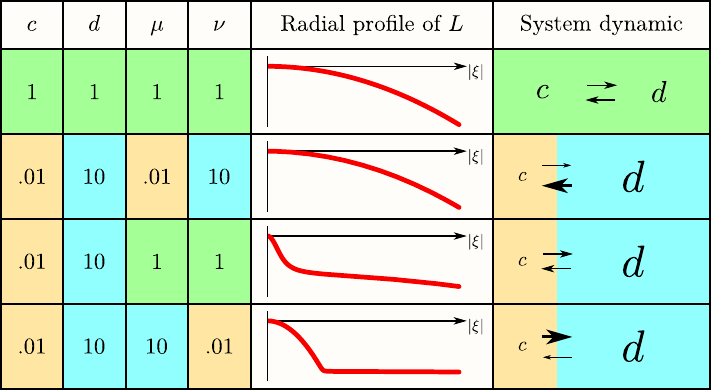}\\[3mm]
\begin{minipage}[c]{14.6079cm}
\textsl{\textbf{Figure \theFIGURE} --- Representation of $L\prth{\xi}$
for different combinations of $c,d,\mu,\nu$. The curves are shown at different scales for readability reasons. The \glmt{soft transition cases} (resp. \glmt{sharp transition cases}) --- see the remarks on $\mathcal{L}$ above --- are displayed in the first and second (resp. third and fourth) lines. The final column provides a visual representation of how $\u$ and $\v$ exchange and spread the individuals.}
\end{minipage}
\end{center}
\item[]\hspace{-0.5mm}\textbullet\,\textit{The persistent data $u_{\infty,0}$ and $v_{\infty,0}$ ---
\reff{DEF_u0v0_diff_non_loc_uncoupled}{%
$\begin{matrix} 
\hat{u}_{\infty,0} : =
\dfrac{1}{2}
\crochets{
\Prth{1 - \dfrac{r}{\sqrt{s}}} \hat{u}_{0} +
\dfrac{\nu}{\sqrt{s}} \, \hat{v}_{\skp\skp\skp 0}},\\[5mm]
\hat{v}_{\skp\infty,0} : =
\dfrac{1}{2}
\crochets{\dfrac{\mu}{\sqrt{s}} \, \hat{u}_{0} +
\Prth{1 + \dfrac{r}{\sqrt{s}}} \hat{v}_{\skp\skp\skp 0}}. \end{matrix}$
}.}
By analyzing
\reff{DEF_r_and_s_preuve_th_linear}{%
$\begin{array}{l}
	r\prth{\xi} : = \frac{c-d}{2}\verti{\xi}^{2} + \frac{\mu-\nu}{2},\\[2mm]
	s\prth{\xi} : = \mu\nu + \crochets{r\prth{\xi}}^{2}.
\end{array}$
},
observe that
$\verti{r/\sqrt{s}}$ and $1/\sqrt{s}$
stay bounded between $0$ and $1$, ensuring the integrability of $\hatt{u}_{\infty,0}$ and $\hatt{v}_{\skp\infty,0}$.
Therefore, $u_{\infty,0}$ and $v_{\infty,0}$ can be defined correctly by
\reff{DEF_u0v0_diff_non_loc_uncoupled}{%
$\begin{matrix} 
\hat{u}_{\infty,0} : =
\dfrac{1}{2}
\crochets{
\Prth{1 - \dfrac{r}{\sqrt{s}}} \hat{u}_{0} +
\dfrac{\nu}{\sqrt{s}} \, \hat{v}_{\skp\skp\skp 0}},\\[5mm]
\hat{v}_{\skp\infty,0} : =
\dfrac{1}{2}
\crochets{\dfrac{\mu}{\sqrt{s}} \, \hat{u}_{0} +
\Prth{1 + \dfrac{r}{\sqrt{s}}} \hat{v}_{\skp\skp\skp 0}}. \end{matrix}$
}
and written using the inversion formula given at the beginning of Section \ref{S2_main_results}.

\hspace*{\myindent}When the diffusion rates $c$ and $d$ are equal, expressions of $u_{\infty,0}$ and $v_{\infty,0}$ become
straightforward linear combinations of $u_{0}$ and $v_{0}$,
and the Fourier transform is no longer required to explain them.
When the diffusion rates are different
--- let us say $c<d$ for simplicity ---
the positive functions $\prth{1+r/\sqrt{s}}/2$, $\mu/2\sqrt{s}$ and $\nu/2\sqrt{s}$ collapse algebraically fast in the high frequencies, acting as low-pass filters.
Similarly, the positive function $\prth{1-r/\sqrt{s}}/2$
acts as a high-pass filter
by remaining below the value $1$ towards which it converges as $\verti{\xi}$ goes to infinity.
Therefore, we can understand the persistent data $u_{\infty,0}$ and $v_{\infty,0}$ as linear combinations of $u_{0}$ and $v_{0}$ that have been altered by high- and low-pass frequency filters.
\end{itemize}
\end{remarks}

\bigskip

We now examine the decay rate of $\prth{\u,\v}$ which is expected to be algebraic.
To reach this,
it suffices to know how $\u_{\infty}\prth{t}$ and $\v_{\!\infty}\prth{t}$ decay in $L^{\infty}\prth{\mathbb{R}^{N}}$,
given the exponential convergence towards the persistent part $\prth{\u_{\infty},\v_{\!\infty}}$ ensured by Theorem
\ref{TH_asymptotic_linear_Heat_exchanger}.
The corollary that follows reveals that this decaying occurs with magnitude $N/2$, which is tightly related to how the operator
$\mathcal{L}$ affects the low frequencies of its argument.

\renewcommand{\gap}{-2.5mm}
\begin{corollary}[Decay rate of the solutions to the diffusive Heat exchanger]\label{CORO_decay_linear_Heat_exchanger} 
Under the hypothesis of Theorem \ref{TH_asymptotic_linear_Heat_exchanger},
there are two 
positive constants $\ell$ and $\ell'$ depending on
$N,c,d,\mu,\nu$,
such that
\renewcommand{\gap}{-3.5mm}
\begin{equation}\label{CONTROL_uv_decay_rate_diff}
\!\!\!\!\!\!\begin{array}{ll}
	\vertii{\u\prth{t}}{L^{\infty}\prth{\mathbb{R}^{N}}}
	\leq
	\ell
	&\hspace{\gap}\Big(
	\vertii{u_{0}}{L^{1}\prth{\mathbb{R}^{N}}}\skp+\skp
	\vertii{v_{0}}{L^{1}\prth{\mathbb{R}^{N}}}\skp+\skp
	\vertii{\?\?\?\?\?\hat{u}_{0}}{L^{1}\prth{\mathbb{R}^{N}}}\skp+\skp
	\vertii{\?\?\?\?\hat{v}_{\skp\skp\skp 0}}{L^{1}\prth{\mathbb{R}^{N}}}\left.\Big)\right/\prth{1+t}^{N/2},\\[2.5mm] 
	\vertii{\v\prth{t}}{L^{\infty}\prth{\mathbb{R}^{N}}}
	\leq
	\ell'
	&\hspace{\gap}\Big(
	\vertii{u_{0}}{L^{1}\prth{\mathbb{R}^{N}}}\skp+\skp
	\vertii{v_{0}}{L^{1}\prth{\mathbb{R}^{N}}}\skp+\skp
	\vertii{\?\?\?\?\?\hat{u}_{0}}{L^{1}\prth{\mathbb{R}^{N}}}\skp+\skp
	\vertii{\?\?\?\?\hat{v}_{\skp\skp\skp 0}}{L^{1}\prth{\mathbb{R}^{N}}}\left.\Big)\right/\prth{1+t}^{N/2},
\end{array}
\end{equation}
for all $t\geq 0$.
\end{corollary}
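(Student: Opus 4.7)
The plan is to establish the decay bound separately on $[0,1]$ and on $[1,\infty)$ and then combine them through the uniform weight $\prth{1+t}^{N/2}$. On $[1,\infty)$, the triangle inequality together with the exponential control \eqref{CONTROL_conv_expo_diff_non_loc_uncoupled} from Theorem \ref{TH_asymptotic_linear_Heat_exchanger} reduces the task to bounding $\u_{\infty}\prth{t}$ and $\v_{\!\infty}\prth{t}$ in $L^{\infty}\prth{\mathbb{R}^{N}}$, since $\u-\u_{\infty}$ and $\v-\v_{\!\infty}$ decay faster than any polynomial and are already controlled by $\vertii{u_{0}}{L^{1}\prth{\mathbb{R}^{N}}}+\vertii{v_{0}}{L^{1}\prth{\mathbb{R}^{N}}}$. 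On $[0,1]$, since $\prth{1+t}^{N/2}$ is bounded, it is enough to produce a uniform-in-$t$ bound of $\vertii{\u\prth{t}}{L^{\infty}\prth{\mathbb{R}^{N}}}+\vertii{\v\prth{t}}{L^{\infty}\prth{\mathbb{R}^{N}}}$ by the sum of the four $L^{1}$-norms on the right-hand side of \eqref{CONTROL_uv_decay_rate_diff}.

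The short-time bound proceeds on the Fourier side. Transforming \eqref{SYS_heat_exchanger_diff} yields a $2{\times}2$ linear ODE system for $\prth{\hat{\u},\hat{\v}}$ whose symbol matrix has the real eigenvalues $L\prth{\xi}$ and $L\prth{\xi}-2\sqrt{s\prth{\xi}}$; both are non-positive since the identity
$$\Prth{\tfrac{c+d}{2}\verti{\xi}^{2}+\tfrac{\mu+\nu}{2}}^{2} - s\prth{\xi} \;=\; \bigl(c\verti{\xi}^{2}+\mu\bigr)\bigl(d\verti{\xi}^{2}+\nu\bigr) - \mu\nu \;\geq\; 0$$
forces $\sqrt{s\prth{\xi}}\leq\tfrac{c+d}{2}\verti{\xi}^{2}+\tfrac{\mu+\nu}{2}$, and since $s\prth{\xi}\geq\mu\nu>0$ the two eigenvalues remain uniformly separated. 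Diagonalizing the symbol therefore yields a pointwise bound $\verti{\hat{\u}\prth{t,\xi}}+\verti{\hat{\v}\prth{t,\xi}} \leq C\bigl(\verti{\hat{u}_{0}\prth{\xi}}+\verti{\hat{v}_{0}\prth{\xi}}\bigr)$ valid for every $t\geq 0$, and the Hausdorff-Young inequality \eqref{INEQUALITY_Hausdorff_Young} converts this into the uniform $L^{\infty}$-bound we need, controlled by $\vertii{\hat{u}_{0}}{L^{1}\prth{\mathbb{R}^{N}}}+\vertii{\hat{v}_{0}}{L^{1}\prth{\mathbb{R}^{N}}}$.

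For the long-time bound I would exploit the Fourier representation $\Fb\crochets{\u_{\infty}\prth{t}}\prth{\xi}=e^{t L\prth{\xi}}\hat{u}_{\infty,0}\prth{\xi}$ coming from \eqref{SYS_diff_non_loc_uncoupled}, apply Hausdorff-Young to obtain
$$\vertii{\u_{\infty}\prth{t}}{L^{\infty}\prth{\mathbb{R}^{N}}} \;\leq\; \prth{2\pi}^{-N}\int_{\mathbb{R}^{N}} e^{t L\prth{\xi}}\verti{\hat{u}_{\infty,0}\prth{\xi}}\,d\xi,$$
and split the integral at a fixed radius $R>0$. The decisive input is a two-sided analysis of $L$: near $\xi=0$, a second-order Taylor expansion of $\sqrt{s\prth{\xi}}$ gives $L\prth{\xi}=-\tfrac{c\nu+d\mu}{\mu+\nu}\verti{\xi}^{2}+O\prth{\verti{\xi}^{4}}$ --- matching the low-frequency Laplacian behavior advertised in the remarks --- so shrinking $R$ forces $L\prth{\xi}\leq-\gamma\verti{\xi}^{2}$ on $\{\verti{\xi}\leq R\}$ for some $\gamma>0$; away from the origin, the factorization above together with $L\prth{0}=0$, continuity and coercivity at infinity ($L\prth{\xi}\sim-\min\prth{c,d}\verti{\xi}^{2}$) gives $L\prth{\xi}\leq-\beta<0$ on $\{\verti{\xi}>R\}$. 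On the low-frequency piece, the uniform bounds $\verti{r/\sqrt{s}}\leq 1$ and $\nu/\sqrt{s}\leq\sqrt{\nu/\mu}$ applied to \eqref{DEF_u0v0_diff_non_loc_uncoupled} together with Hausdorff-Young yield $\vertii{\hat{u}_{\infty,0}}{L^{\infty}\prth{\mathbb{R}^{N}}} \leq C\bigl(\vertii{u_{0}}{L^{1}\prth{\mathbb{R}^{N}}}+\vertii{v_{0}}{L^{1}\prth{\mathbb{R}^{N}}}\bigr)$, after which the standard Gaussian integral $\int_{\verti{\xi}\leq R}e^{-\gamma t\verti{\xi}^{2}}\,d\xi=O\prth{t^{-N/2}}$ produces the required $t^{-N/2}$ rate; on the high-frequency piece, the pointwise estimate $\verti{\hat{u}_{\infty,0}}\leq C\bigl(\verti{\hat{u}_{0}}+\verti{\hat{v}_{0}}\bigr)$ combined with the factor $e^{-\beta t}$ delivers an exponential contribution controlled by $\vertii{\hat{u}_{0}}{L^{1}\prth{\mathbb{R}^{N}}}+\vertii{\hat{v}_{0}}{L^{1}\prth{\mathbb{R}^{N}}}$. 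The $\v_{\!\infty}$ estimate follows by the same argument with the roles of $u_{0}$ and $v_{0}$ swapped. I expect the main obstacle to be the clean extraction of the quadratic coefficient $\prth{c\nu+d\mu}/\prth{\mu+\nu}$ through the square root in $L$ with an effective remainder bound; everything else is a routine assembly of Hausdorff-Young and a Gaussian integral.
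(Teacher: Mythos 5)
Your proposal is correct, and for the heart of the matter --- the algebraic decay of the persistent part --- it follows exactly the paper's route: write $\hat{\u}_{\infty}\prth{t}=e^{tL}\hat{u}_{\infty,0}$, split at a fixed frequency radius, use a quadratic upper bound $L\prth{\xi}\leq-\gamma\verti{\xi}^{2}$ near the origin together with the uniform bounds $\verti{1\pm r/\sqrt{s}}\leq 2$, $\nu/\sqrt{s}\leq\sqrt{\nu/\mu}$, $\mu/\sqrt{s}\leq\sqrt{\mu/\nu}$ and a Gaussian integral to extract $t^{-N/2}$, and use a uniform negative bound $L\leq-\beta$ at high frequencies to get an exponentially small remainder controlled by $\vertii{\hat{u}_{0}}{L^{1}\prth{\mathbb{R}^{N}}}+\vertii{\hat{v}_{0}}{L^{1}\prth{\mathbb{R}^{N}}}$; the paper obtains the high-frequency bound from the radial monotonicity of $L$, you from continuity plus coercivity at infinity, and both work. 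The worry you flag about extracting the coefficient $\prth{c\nu+d\mu}/\prth{\mu+\nu}$ through the square root is not an obstacle: any negative-definite quadratic upper bound for $L$ on a small ball suffices, and the exact constant only affects $\ell$ and $\ell'$.

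The one place you genuinely depart from the paper is the short-time bound on $\intervalleff{0}{1}$. The paper invokes the comparison principle: $\prth{\u,\v}$ is dominated by the spatially homogeneous solution issued from $\prth{\vertii{u_{0}}{L^{\infty}\prth{\mathbb{R}^{N}}},\vertii{v_{0}}{L^{\infty}\prth{\mathbb{R}^{N}}}}$, whose sum is conserved in time, and the resulting constant is then bounded via Hausdorff--Young by the $L^{1}$-norms of $\hat{u}_{0}$ and $\hat{v}_{0}$. You instead stay entirely on the Fourier side: your identity
$$
\Prth{\tfrac{c+d}{2}\verti{\xi}^{2}+\tfrac{\mu+\nu}{2}}^{2}-s\prth{\xi}
=\prth{c\verti{\xi}^{2}+\mu}\prth{d\verti{\xi}^{2}+\nu}-\mu\nu\geq 0
$$
is correct and makes both eigenvalues $\lambda_{\pm}$ non-positive, the entries of the diagonalizing matrices are uniformly bounded, hence $\verti{\hat{\u}\prth{t,\xi}}+\verti{\hat{\v}\prth{t,\xi}}\leq C\prth{\verti{\hat{u}_{0}\prth{\xi}}+\verti{\hat{v}_{0}\prth{\xi}}}$ for every $t\geq 0$, and Hausdorff--Young closes the estimate. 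This variant is self-contained within the spectral framework and avoids importing a PDE tool, at the cost of the extra eigenvalue-sign verification; the paper's variant is shorter but relies on the comparison principle. Both give constants with the stated dependence, and the final patching of the two regimes through $\prth{1+t}^{N/2}\leq 2^{N/2}\max\prth{1,t^{N/2}}$ is identical.
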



\renewcommand{\gap}{2.155mm}

\subsection{The semi-linear problem \reff{SYS_heat_exchanger}{$\left\lbrace \begin{array}{l}\partial_{t}u = c\Delta u - \mu u + \nu v + \hspace{\gap}u^{1+\p},\\ \partial_{t}v = d\Delta v + \mu u - \nu v + \kappa v^{1+\q},\\ \prth{u,v}|_{t=0} = \prth{u_{0},v_{0}}. \end{array} \right.
$}}\label{SS2_2_non_linear}

Having examined the behavior of the pure diffusive Heat exchanger
\reff{SYS_heat_exchanger_diff}{%
$\left\lbrace\begin{array}{l}
	\partial_{t}\u = c\Delta \u - \mu \u + \nu\v,\\
	\partial_{t}\v = d\Delta \v + \mu \u - \nu\v.
\end{array}\right.$
}
in the previous subsection,
we are now prepared to address the question of global existence versus blow-up for our main problem
\reff{SYS_heat_exchanger}{%
\renewcommand{\gap}{2.155mm}
$
\left\lbrace \begin{array}{l}
	\partial_{t}u = c\Delta u - \mu u + \nu v + \hspace{\gap}u^{1+\p},\\
	\partial_{t}v = d\Delta v + \mu u - \nu v + \kappa v^{1+\q},\\
	\prth{u,v}|_{t=0} = \prth{u_{0},v_{0}}.
\end{array} \right.
$
}.
Before proceeding with the analysis, it is worth recalling that the comparison principle ensures that, given the same initial datum $\prth{u_{0},v_{0}}$, the solution to
\reff{SYS_heat_exchanger}{%
\renewcommand{\gap}{2.155mm}
$
\left\lbrace \begin{array}{l}
	\partial_{t}u = c\Delta u - \mu u + \nu v + u^{1+\p},\\
	\partial_{t}v = d\Delta v + \mu u - \nu v,\\
	\prth{u,v}|_{t=0} = \prth{u_{0},v_{0}}.
\end{array} \right.
$
}$|_{\kappa=0}$
always remains lower than the solution to
\reff{SYS_heat_exchanger}{%
\renewcommand{\gap}{2.155mm}
$
\left\lbrace \begin{array}{l}
	\partial_{t}u = c\Delta u - \mu u + \nu v + u^{1+\p},\\
	\partial_{t}v = d\Delta v + \mu u - \nu v + v^{1+\q},\\
	\prth{u,v}|_{t=0} = \prth{u_{0},v_{0}}.
\end{array} \right.
$
}$|_{\kappa=1}$.
Consequently, to demonstrate blow-up, it is sufficient to consider
\reff{SYS_heat_exchanger}{%
\renewcommand{\gap}{2.155mm}
$
\left\lbrace \begin{array}{l}
	\partial_{t}u = c\Delta u - \mu u + \nu v + u^{1+\p},\\
	\partial_{t}v = d\Delta v + \mu u - \nu v,\\
	\prth{u,v}|_{t=0} = \prth{u_{0},v_{0}}.
\end{array} \right.
$
}$|_{\kappa=0}$,
while to establish global existence, we only need to examine
\reff{SYS_heat_exchanger}{%
\renewcommand{\gap}{2.155mm}
$
\left\lbrace \begin{array}{l}
	\partial_{t}u = c\Delta u - \mu u + \nu v + u^{1+\p},\\
	\partial_{t}v = d\Delta v + \mu u - \nu v + v^{1+\q},\\
	\prth{u,v}|_{t=0} = \prth{u_{0},v_{0}}.
\end{array} \right.
$
}$|_{\kappa=1}$.

Corollary \ref{CORO_decay_linear_Heat_exchanger} states that the solutions to
\reff{SYS_heat_exchanger_diff}{%
$\left\lbrace\begin{array}{l}
	\partial_{t}\u = c\Delta \u - \mu \u + \nu\v,\\
	\partial_{t}\v = d\Delta \v + \mu \u - \nu\v.
\end{array}\right.$
}
decay uniformly to zero at the algebraic rate $N/2$. With regard to the uncoupled reaction component of
\reff{SYS_heat_exchanger}{%
\renewcommand{\gap}{2.155mm}
$
\left\lbrace \begin{array}{l}
	\partial_{t}u = c\Delta u - \mu u + \nu v + \hspace{\gap}u^{1+\p},\\
	\partial_{t}v = d\Delta v + \mu u - \nu v + \kappa v^{1+\q},\\
	\prth{u,v}|_{t=0} = \prth{u_{0},v_{0}}.
\end{array} \right.
$
},
namely,
\renewcommand{\gap}{-3.5mm}
\begin{equation}\label{SYS_heat_exchanger_growth}
\begin{array}{llll}
	U' = &\hspace{\gap}U^{1+\p}, & \qquad & t>0,\\[2mm]
	V' = \kappa &\hspace{\gap}V^{1+\q}, & \qquad & t>0,
\end{array}
\end{equation}
it is apparent that, given any positive initial data $U_{0}$, $V_{0}$,
\begin{itemize}[label=\textbullet]
	\item $U$ blows-up at the rate $1/\p$ for $\kappa\in\acco{0,1}$, and
	\item $V$ blows-up at the rate $1/\q$ only when $\kappa = 1$.
\end{itemize}
As observed in introduction, we anticipate distinguishing between systematic blow-up and possible global existence by comparing the decay rate of
\reff{SYS_heat_exchanger_diff}{%
$\left\lbrace\begin{array}{l}
	\partial_{t}\u = c\Delta \u - \mu \u + \nu\v,\\
	\partial_{t}\v = d\Delta \v + \mu \u - \nu\v.
\end{array}\right.$
}
with the blow-up rates of\renewcommand{\gap}{-3.5mm}
\reff{SYS_heat_exchanger_growth}{%
$\begin{array}{ll}
	U' = &\hspace{\gap}U^{1+\p},\\
	V' = \kappa &\hspace{\gap}V^{1+\q}.
\end{array}$
}.
To be more specific, the regime transition is expected at
$N/2 = 1/\p$ when $\kappa=0$ and $N/2 = \max\prth{1/\p,1/\q}$ when $\kappa=1$.

These critical exponents can simply be confirmed when the rates $c,d,\mu,\nu$ are identical.
To see this, let us assume $c=d=\mu=\nu=1$, and define
$\prth{\sigma,\delta} : = \prth{u+v,u-v}$.
Observe that the blowing-up of $\sigma$ is equivalent to that of $\prth{u,v}$.
We first examine the solutions to
\reff{SYS_heat_exchanger}{%
\renewcommand{\gap}{2.155mm}
$
\left\lbrace \begin{array}{l}
	\partial_{t}u = c\Delta u - \mu u + \nu v + u^{1+\p},\\
	\partial_{t}v = d\Delta v + \mu u - \nu v + v^{1+\q},\\
	\prth{u,v}|_{t=0} = \prth{u_{0},v_{0}}.
\end{array} \right.
$
}$|_{\kappa=1}$
to explore the possible global existence.
In an attempt to show that $\sigma$ may not blow-up, we have
$$
\partial_{t}\sigma
\quad = \quad
\Delta\sigma + u^{1+\p} + v^{1+\q}
\quad\leq \quad
\Delta\sigma + \sigma^{1+\p} + \sigma^{1+\q}
\quad = : \quad \pazocal{L}\sigma.
$$
Thus $\sigma$ serves as a sub-solution to $\partial_{t}=\pazocal{L}$.
Under the assumption $\min\prth{\p,\q}>2/N$, we can find small enough positive values $\eta$ and $R$ to allow the solution $\bara{\sigma}$ to the Cauchy problem
\begin{equation*}
\left\lbrace \begin{array}{llll}
	\partial_{t}\bara{\sigma} = \Delta \bara{\sigma} + 2\bara{\sigma}^{\,1+\min\prth{\p,\q}}, & \qquad &
	t>0, & x\in\mathbb{R}^{N},\\[1mm]
	\bara{\sigma}|_{t=0} = \eta\indicatrice{\pazocal{B}\prth{0, R}}, & \qquad &&
	x\in\mathbb{R}^{N},
\end{array} \right.
\end{equation*}
to be global and such that $\bara{\sigma}\prth{t}\leq 1$ for any $t>0$.
This can be seen from the classical construction of global super-solutions as the product of a
time-dependent function with the solution to the Heat equation — see
\citeeep{QuittnerSuperlinear19}{130} for instance.
Using this estimation on $\bara{\sigma}$, we write
$$
\partial_{t}\bara{\sigma}
\quad\geq \quad
\Delta\bara{\sigma} +
\bara{\sigma}^{\,1+\min\prth{\p,\q}}+\bara{\sigma}^{\,1+\max\prth{\p,\q}}
\quad =\quad
\pazocal{L}\bara{\sigma},
$$
which indicates that $\bara{\sigma}$ is a global super-solution to $\partial_{t}=\pazocal{L}$.
Now,
$\sigma\leq \bara{\sigma}$
as long as $u_{0}$ and $v_{0}$ are chosen small enough to satisfy
$u_{0}+v_{0}\leq \eta\indicatrice{\pazocal{B}\prth{0, R}}$, and with that condition met, this case is resolved.

For the systematic blow-up
(still with $c=d=\mu=\nu=1$)
we examine
\reff{SYS_heat_exchanger}{%
\renewcommand{\gap}{2.155mm}
$
\left\lbrace \begin{array}{l}
	\partial_{t}u = c\Delta u - \mu u + \nu v + u^{1+\p},\\
	\partial_{t}v = d\Delta v + \mu u - \nu v,\\
	\prth{u,v}|_{t=0} = \prth{u_{0},v_{0}}.
\end{array} \right.
$
}$|_{\kappa=0}$,
and observe that
$$
\partial_{t}\delta
\quad = \quad
\Delta\delta-2\delta + u^{1+\p}
\quad\geq\quad
\Delta\delta-2\delta,
$$
which implies that
$\delta$ remains greater than the sub-solution $e^{-2t}\prth{G\prth{t}\ast\prth{u_{0}-v_{0}}}$, that is positive as long as $u_{0}>v_{0}$.
It is important to note that the assumption $u_{0}>v_{0}$ is not a strict requirement for proving systematic blow-up.
Indeed, we may suppose $u_{0}>0$ (even if we need to shift the time for this to hold) and then work with the solution arising from the initial datum $\prth{u_{0},0}$.
With the established order relation $u>v$ which holds for all times, we can determine now that $\sigma$ systematically blows-up when $\p<2/N$ since
$$
\partial_{t}\sigma 
\quad = \quad
\Delta\sigma + u^{1+\p}
\quad \geq \quad
\Delta\sigma + \prth{u/2 +v/2}^{1+\p}
\quad = \quad
\Delta\sigma + \frac{\sigma^{1+\p}}{2^{1+\p}}.
$$

In those cases of identical rates, the key argument that enables progress concisely stands in
$$
c\Delta u \pm d\Delta v
\quad = \quad
c\Delta\prth{u \pm v}
\quad = \quad
d\Delta\prth{u \pm v}.
$$
However, these equalities break down when the diffusion rates differ, making $c\neq d$ one of the main challenges of the problem. In the general case where $c,d,\mu,\nu$ are arbitrary, we cannot rely on our knowledge about Fujita's original problem
\reff{EQ_Fujita}{%
$\left\lbrace \begin{array}{l}
	\partial_{t}u = \Delta u + u^{1+\p},\\
	u|_{t=0} = u_{0}.
\end{array} \right.$}
to draw conclusions on the lifetime of
solutions to
\reff{SYS_heat_exchanger}{%
\renewcommand{\gap}{2.155mm}
$
\left\lbrace \begin{array}{l}
	\partial_{t}u = c\Delta u - \mu u + \nu v + \hspace{\gap}u^{1+\p},\\
	\partial_{t}v = d\Delta v + \mu u - \nu v + \kappa v^{1+\q},\\
	\prth{u,v}|_{t=0} = \prth{u_{0},v_{0}}.
\end{array} \right.
$
}.
Theorem \ref{TH_possible_global_existence} and Theorem \ref{TH_systematic_BU}
fill this gap and are the main contribution of the present paper.

\begin{theorem}[Possible global existence]\label{TH_possible_global_existence}
Let
\begin{equation}\label{PF_possible_extinction}
\frac{N}{2}>
\left\lbrace \begin{array}{ll}
	\frac{1}{\p} &\quad \text{if }\kappa=0, \\[1mm]
	\max\Prth{\frac{1}{\p}, \frac{1}{\q}} &\quad \text{if }\kappa=1. \\
\end{array} \right.
\end{equation}
Then there is $m_{0}>0$ that depends on $N,c,d,\mu,\nu,\p,\q,\kappa$ such that the solution $\prth{u,v}$ to problem
\reff{SYS_heat_exchanger}{%
\renewcommand{\gap}{2.155mm}
$
\left\lbrace \begin{array}{l}
	\partial_{t}u = c\Delta u - \mu u + \nu v + \hspace{\gap}u^{1+\p},\\
	\partial_{t}v = d\Delta v + \mu u - \nu v + \kappa v^{1+\q},\\
	\prth{u,v}|_{t=0} = \prth{u_{0},v_{0}}.
\end{array} \right.
$
}
with non-negative $\prth{u_{0},v_{0}}$
exists for all times as soon as
\begin{equation}\label{CONTROL_pour_avoir_existence_globale}
m : = \vertii{u_{0}}{L^{1}\prth{\mathbb{R}^{N}}} + 
\vertii{v_{0}}{L^{1}\prth{\mathbb{R}^{N}}} + 
\vertii{\?\?\?\?\?\hat{u}_{0}}{L^{1}\prth{\mathbb{R}^{N}}}\skp+\skp
\vertii{\?\?\?\?\hat{v}_{\skp\skp\skp 0}}{L^{1}\prth{\mathbb{R}^{N}}}
< m_{0}.
\end{equation}
Furthermore, under hypothesis
\reff{CONTROL_pour_avoir_existence_globale}{%
$m : = \vertii{u_{0}}{L^{1}\prth{\mathbb{R}^{N}}} + 
\vertii{v_{0}}{L^{1}\prth{\mathbb{R}^{N}}} + 
\vertii{\?\?\?\?\?\hat{u}_{0}}{L^{1}\prth{\mathbb{R}^{N}}}\skp+\skp
\vertii{\?\?\?\?\hat{v}_{\skp\skp\skp 0}}{L^{1}\prth{\mathbb{R}^{N}}}
< m_{0}.$
},
\begin{equation}\label{CONTROL_solution_globale}
\vertii{u\prth{t}}{L^{\infty}\prth{\mathbb{R}^{N}}}
\leq
\frac{M}{\prth{1+t}^{N/2}}
\qquad
\text{and}
\qquad 
\vertii{v\prth{t}}{L^{\infty}\prth{\mathbb{R}^{N}}}
\leq
\frac{M'}{\prth{1+t}^{N/2}},
\end{equation}
for all $t>0$ and some positive constants $M$ and $M'$
depending on $N,c,d,\mu,\nu,\p,\q,\kappa,\text{and }m$.
\end{theorem}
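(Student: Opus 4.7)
The plan is to establish \eqref{CONTROL_solution_globale}, and hence global existence, via a Duhamel-based fixed-point argument applied to the mild formulation of \eqref{SYS_heat_exchanger}:
\begin{equation*}
(u,v)(t) = S(t)(u_0,v_0) + \int_0^t S(t-\tau)\bigl(u^{1+\p}(\tau),\,\kappa v^{1+\q}(\tau)\bigr)\,d\tau,
\end{equation*}
where $S(t)$ denotes the semigroup of the diffusive Heat exchanger \eqref{SYS_heat_exchanger_diff}. Corollary~\ref{CORO_decay_linear_Heat_exchanger} already delivers the target decay $\|S(t)(u_0,v_0)\|_{L^\infty} \le C\,m\,(1+t)^{-N/2}$ for the linear part, so the game is to show that, under \eqref{PF_possible_extinction} and a smallness assumption on $m$, the nonlinear Duhamel integral inherits the same decay with a smaller constant, closing a bootstrap.

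Concretely, I would iterate on the non-negative cone
\begin{equation*}
\pazocal{A} := \Bigl\{(u,v)\ge 0 :\; \|u(t)\|_{L^\infty}+\|v(t)\|_{L^\infty}\le \tfrac{K}{(1+t)^{N/2}},\ \|u(t)\|_{L^1}+\|v(t)\|_{L^1}\le K'\Bigr\},
\end{equation*}
with $K,K'$ chosen as small multiples of $m$ (say $K = 2Cm$). Besides Corollary~\ref{CORO_decay_linear_Heat_exchanger}, I will need three standard properties of $S(t)$: the $L^\infty$-comparison bound $\|S(t)(f,g)\|_{L^\infty}\le C(\|f\|_{L^\infty}+\|g\|_{L^\infty})$ (from the cooperative-positive structure), the $L^1$-contraction $\|S(t)(f,g)\|_{L^1}\le \|f\|_{L^1}+\|g\|_{L^1}$ inherited from the mass identity \eqref{EQ_mass_preseved}, and the $L^1 \to L^\infty$ smoothing $\|S(t)(f,g)\|_{L^\infty}\le C\,t^{-N/2}(\|f\|_{L^1}+\|g\|_{L^1})$ valid for $t\ge 1$, which comes from the kernel representation underpinning Theorem~\ref{TH_asymptotic_linear_Heat_exchanger}.

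Splitting the Duhamel integral as $\int_0^{t/2}+\int_{t/2}^t$: on $[t/2,t]$ the $L^\infty$-bound combined with $\|u^{1+\p}(\tau)\|_{L^\infty}\le K^{1+\p}(1+\tau)^{-N(1+\p)/2}$ gives a contribution bounded by $CK^{1+\p}(1+t)^{1-N(1+\p)/2}$, which fits under $(1+t)^{-N/2}$ precisely when $N\p/2\ge 1$, i.e.\ under the Fujita condition $N/2>1/\p$. On $[0,t/2]$, the smoothing estimate combined with $\|u^{1+\p}(\tau)\|_{L^1}\le K^{\p}K'(1+\tau)^{-N\p/2}$ yields $CK^\p K'\,(t/2)^{-N/2}\int_0^\infty (1+\tau)^{-N\p/2}\,d\tau$, convergent under the same condition. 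The $v^{1+\q}$-term is handled identically and activates $N/2>1/\q$ only when $\kappa=1$, matching \eqref{PF_possible_extinction}. Since these nonlinear contributions carry an extra factor $K^\p\sim m^\p$ (or $K^\q\sim m^\q$) relative to the linear one, choosing $m_0$ small enough makes the Duhamel map send $\pazocal{A}$ into itself with constant $K/2$; a monotone iteration on the cooperative system then produces a fixed point $(u,v)\in\pazocal{A}$ which, by the comparison principle, dominates the maximal solution to \eqref{SYS_heat_exchanger}, forcing it to be global and to obey \eqref{CONTROL_solution_globale}.

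The key obstacle I foresee is propagating the $L^1$ bound $\|u(t)\|_{L^1}+\|v(t)\|_{L^1}\le K'$ self-consistently: the nonlinear reactions break the mass conservation of the linear flow, so the control must be recovered through a Gronwall argument. Integrating the sum of the two equations of \eqref{SYS_heat_exchanger} over $\mathbb{R}^N$ and inserting the ansatz gives
\begin{equation*}
\tfrac{d}{dt}\bigl(\|u\|_{L^1}+\|v\|_{L^1}\bigr) \le K^\p\|u\|_{L^1}(1+t)^{-N\p/2}+\kappa K^\q\|v\|_{L^1}(1+t)^{-N\q/2},
\end{equation*}
whose time-coefficients are integrable precisely under \eqref{PF_possible_extinction}; for $m$ small, the resulting Gronwall factor stays arbitrarily close to $1$, so $K'$ can indeed be chosen of order $m$. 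A last technicality is that the Fourier-data terms $\|\hat{u}_0\|_{L^1}+\|\hat{v}_0\|_{L^1}$ entering $m$ are needed only on the short interval $[0,1]$, via Hausdorff--Young \eqref{INEQUALITY_Hausdorff_Young}, to pass from $L^1$-data to pointwise bounds on $(u,v)$; the long-time analysis rests solely on $L^1$-estimates, consistently with the structure of Corollary~\ref{CORO_decay_linear_Heat_exchanger}.
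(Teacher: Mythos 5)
Your argument is correct in its essentials, but it follows a genuinely different route from the paper. The paper does not use Duhamel at all: it builds an explicit super-solution $\prth{\bara{u},\bara{v}} := \prth{F\prth{t}\,\u, F\prth{t}\,\v}$, where $\prth{\u,\v}$ is the linear Heat-exchanger flow issued from $\prth{u_{0},v_{0}}$ and $F$ is a scalar function of time with $F\prth{0}=1$. Because the \emph{same} multiplier $F$ is applied to both components, the exchange terms $-\mu\bara{u}+\nu\bara{v}$ are reproduced exactly by the linear flow, and the super-solution inequalities collapse to the scalar ODIs $F'\geq F^{1+\p}\u^{\p}$ and $F'\geq\kappa F^{1+\q}\v^{\q}$; inserting the decay of Corollary \ref{CORO_decay_linear_Heat_exchanger} turns these into an explicit Bernoulli ODE whose solution stays bounded precisely when $m<m_{0}$, and the comparison principle then yields \eqref{CONTROL_solution_globale} with explicit $m_{0},M,M'$. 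Your fixed-point scheme buys robustness and standardness (it does not exploit the cooperative structure beyond positivity of the semigroup, and it would survive perturbations of the coupling), at the cost of two ingredients the paper never has to establish. First, the $L^{1}\to L^{\infty}$ smoothing bound $\vertii{S\prth{t}\prth{f,g}}{L^{\infty}\prth{\mathbb{R}^{N}}}\leq C\,t^{-N/2}\prth{\vertii{f}{L^{1}}+\vertii{g}{L^{1}}}$ is \emph{not} what Corollary \ref{CORO_decay_linear_Heat_exchanger} states --- that corollary keeps $\vertii{\hat{u}_{0}}{L^{1}}+\vertii{\hat{v}_{\skp\skp\skp 0}}{L^{1}}$ on the right-hand side to absorb the high frequencies --- so you must prove it as a separate lemma; it does hold, since the eigenvalues satisfy $\lambda_{\pm}\prth{\xi}\leq-\alpha\verti{\xi}^{2}$ globally (one checks $P^{2}-s=cd\verti{\xi}^{4}+\prth{c\nu+d\mu}\verti{\xi}^{2}$ with $P:=\frac{c+d}{2}\verti{\xi}^{2}+\frac{\mu+\nu}{2}$, whence $\lambda_{+}=-\prth{P^{2}-s}/\prth{P+\sqrt{s}}$), and the matrix entries in \eqref{EQ_persistent_part_Fourier_side}--\eqref{EQ_evanescent_part_Fourier_side} are bounded, so $\int e^{t\lambda_{+}}d\xi\leq Ct^{-N/2}$ closes it. Second, your Gr\"onwall step for the $L^{1}$ bound, as written, is applied to the unknown solution and thus presupposes the $L^{\infty}$ decay you are proving; this circularity disappears if you instead estimate $\vertii{T\prth{u,v}\prth{t}}{L^{1}}\leq m+K^{\p}K'\int_{0}^{\infty}\prth{1+\tau}^{-N\p/2}d\tau+\kappa\,K^{\q}K'\int_{0}^{\infty}\prth{1+\tau}^{-N\q/2}d\tau$ directly on the iteration map, which is convergent exactly under \eqref{PF_possible_extinction}. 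With those two repairs your splitting $\int_{0}^{t/2}+\int_{t/2}^{t}$ gives the right powers ($1-N\p/2\leq 0$ on the far piece, integrability of $\prth{1+\tau}^{-N\p/2}$ on the near piece), the extra factors $K^{\p},K^{\q}$ provide the contraction for $m<m_{0}$, and the fixed point is the (unique, hence maximal) solution itself --- no domination argument is needed. The paper's construction is shorter and yields explicit constants; yours is the more portable machine.
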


\begin{theorem}[Systematic blow-up]\label{TH_systematic_BU}
Let
\begin{equation}\label{PF_sytematic_blow_up}
\frac{N}{2}<
\left\lbrace \begin{array}{ll}
	\frac{1}{\p} &\quad \text{if }\kappa=0, \\[1mm]
	\max\Prth{\frac{1}{\p}, \frac{1}{\q}} &\quad \text{if }\kappa=1. \\
\end{array} \right.
\end{equation}
Then any solution $\prth{u,v}$ to problem
\reff{SYS_heat_exchanger}{%
\renewcommand{\gap}{2.155mm}
$
\left\lbrace \begin{array}{l}
	\partial_{t}u = c\Delta u - \mu u + \nu v + \hspace{\gap}u^{1+\p},\\
	\partial_{t}v = d\Delta v + \mu u - \nu v + \kappa v^{1+\q},\\
	\prth{u,v}|_{t=0} = \prth{u_{0},v_{0}}.
\end{array} \right.
$
}
with non-both-trivial and non-negative initial datum
$\prth{u_{0},v_{0}} \in \prth{L^{1}\prth{\mathbb{R}^{N}}\cap L^{\infty}\prth{\mathbb{R}^{N}}}^{2}$
blows-up in finite time.
\end{theorem}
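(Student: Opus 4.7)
The plan is to reduce the theorem to a single archetypal case and then deploy a Kaplan-type test-function argument adapted to the coupled linear semigroup. First, by the comparison principle the $\kappa=1$ solution dominates the $\kappa=0$ one with the same initial data, while the manifest symmetry $(u,v,c,d,\mu,\nu,p)\leftrightarrow(v,u,d,c,\nu,\mu,q)$ of the system interchanges the two unknowns; combining these two observations, it suffices to prove blow-up in the single scenario $\kappa=0$, $1/p>N/2$. After a short time shift --- exploiting the positivity-improving property of the linear semigroup on non-trivial non-negative data --- I may also assume that $u_0$ and $v_0$ are both pointwise bounded below by a common Gaussian.

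Next I would introduce a pair of test functions $(\phi(R,x),\psi(R,x))_{R\ge 0}$ solving the adjoint linear heat exchanger
\[
\partial_R\phi=c\Delta\phi-\mu\phi+\mu\psi,\qquad
\partial_R\psi=d\Delta\psi+\nu\phi-\nu\psi,
\]
starting from the symmetric data $\phi(0,\cdot)=\psi(0,\cdot)=G(\varepsilon,\cdot)$ with a small $\varepsilon>0$. Although this adjoint is not identical to the original system, it remains cooperative and the eigenvalues of its Fourier symbol coincide with those of the original operator, so arguments parallel to the proof of Theorem~\ref{TH_asymptotic_linear_Heat_exchanger} apply. In particular, the combination $\nu\int\phi(R)+\mu\int\psi(R)$ is conserved, while the difference $\int\phi(R)-\int\psi(R)$ obeys a damped scalar ODE; together with the symmetric choice of initial data this forces $\int\phi(R)\equiv\int\psi(R)\equiv 1$ for all $R$. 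Moreover, one obtains an asymptotic description $(\phi(R),\psi(R))\approx(1,1)\,K^{\mathcal{L}}_R$ as $R\to\infty$, with $K^{\mathcal{L}}_R$ behaving at low frequencies like a heat kernel of effective variance $\alpha R$, where $\alpha=(c\nu+d\mu)/(\mu+\nu)$.

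The heart of the argument is the Kaplan functional
\[
F(t):=\int_{\mathbb{R}^{N}}\bigl[\phi(T-t,x)u(t,x)+\psi(T-t,x)v(t,x)\bigr]\,dx,\qquad t\in[0,T),
\]
for a large parameter $T>0$. By design, the four Laplacian contributions cancel through integration by parts, and the four exchange contributions cancel algebraically --- precisely the rationale for choosing the adjoint system --- leaving the clean identity $F'(t)=\int\phi(T-t,x)u(t,x)^{1+p}\,dx$. Jensen's inequality combined with the unit mass of $\phi(R)$ then gives $F'(t)\ge\bigl(\int\phi(T-t)u(t)\bigr)^{1+p}$. The main technical obstacle will be to upgrade this to a genuine Bernoulli inequality $F'\ge c\,F^{1+p}$, that is, to show that $\int\phi(T-t)u(t)$ captures a definite positive fraction of $F(t)$. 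My strategy is to bound $(u,v)$ below by the linear sub-solution $(\underline u,\underline v)$ obtained by dropping the nonlinearity: by Theorem~\ref{TH_asymptotic_linear_Heat_exchanger}, its mass asymptotically splits between the two components in the ratio $\nu:\mu$; the extra mass generated by the reaction $u^{1+p}$ is subsequently redistributed by the exchange in the same ratio; and $\phi(R)\sim\psi(R)$ at large $R$. Combined, these three facts should yield $\int\phi\,u\ge cF$ uniformly on the relevant window $R=T-t\gtrsim T/2$.

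With the inequality $F'\ge c\,F^{1+p}$ in hand, the standard Bernoulli argument forces $F$ to explode in time at most of order $F(0)^{-p}$. The Gaussian-type asymptotics of $(\phi,\psi)$ give $F(0)\gtrsim T^{-N/2}\bigl(\|u_0\|_{L^1}+\|v_0\|_{L^1}\bigr)$, so the predicted blow-up time is of order $T^{Np/2}$. The hypothesis $1/p>N/2$, namely $Np/2<1$, makes $T^{Np/2}<T$ for $T$ sufficiently large, which contradicts the assumed global $L^\infty$-boundedness of $(u,v)$ up to time $T$. This contradiction yields blow-up in finite time and completes the proof.
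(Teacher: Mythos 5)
Your proof takes a genuinely different route from the paper: where the paper convolves $(u,v)$ with a fixed normalized Gaussian $\Phi_{\varepsilon}$, derives a cooperative planar ODI system for the two blurred quantities, and concludes by a phase-plane analysis of the associated ODE system (an invariant blow-up region $\Omega$ whose threshold equilibrium scales like $\lambda^{1/p}$, to be beaten by data scaling like $\varepsilon^{N/2}$), you collapse the system to a single scalar Kaplan functional $F(t)=\int\bigl[\phi(T-t)u+\psi(T-t)v\bigr]$ built on the adjoint heat exchanger. Several of your steps are correct and nicely executed: the adjoint choice does cancel both the diffusive and the exchange contributions; $\int\phi(R)\equiv\int\psi(R)\equiv 1$ does follow from the conservation of $\nu\int\phi+\mu\int\psi$ and the exponential damping of $\int\phi-\int\psi$; Jensen gives $F'\ge\bigl(\int\phi u\bigr)^{1+p}$; and the final scaling comparison $T^{Np/2}<T$ is the correct quantitative form of the hypothesis $1/p>N/2$, the exact analogue of the paper's step where $\varepsilon^{N/2}$ is compared with $\lambda^{1/p}$.

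However, there is a genuine gap at the step you yourself flag as the main technical obstacle: the inequality $\int\phi(T-t)u(t)\ge c\,F(t)$, equivalently $\int\psi(T-t)v(t)\le C\int\phi(T-t)u(t)$ uniformly on your time window. The justification offered is heuristic and does not survive scrutiny. Bounding $(u,v)$ from below by the linear flow yields a lower bound on $\int\phi u$, but what is needed is an upper bound on $\int\psi v$ relative to $\int\phi u$ at the same time $t$; Duhamel expresses $v(t)$ through $e^{(t-s)d\Delta}u(s)$ at earlier times $s$, so no pointwise-in-time comparison follows. The natural attempt to prove $v\le Cu$ plus a controllable remainder via a comparison argument for $Cv-u$ fails precisely because $c\ne d$ prevents the two Laplacians from combining --- which is exactly the difficulty the paper identifies as the heart of the problem and which its phase-plane argument is designed to avoid, since the planar ODI system never requires comparing $u$ with $v$. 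Moreover, the claimed redistribution of reaction-generated mass in the ratio $\nu:\mu$ is an asymptotic statement about the linear semigroup that does not apply to mass created at times close to $t$, and the weighted integrals against $\phi(T-t)$ and $\psi(T-t)$ are not proportional to the plain masses, because solution and weights live on comparable spatial scales $\sqrt{t}\sim\sqrt{T-t}$ in the middle of your window. Until this comparability is proved, the scalar ODI $F'\ge c\,F^{1+p}$ is not established and the argument does not close.
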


\begin{remark}[on Theorem \ref{TH_systematic_BU}]
~\\[-0.7cm]
\begin{itemize}[label=\textbullet , leftmargin=0mm]
\item[]\hspace{-0.5mm}\textbullet\,\textit{Simultaneous versus non-simultaneous blow-up.}
Based on
\reff{BLOW_UP_L_infty}{$\lim\limits_{t \rightarrow T^{-}}
\Prth{
\vertii{u\prth{t}}{L^{\infty}\prth{\mathbb{R}^{N}}} +
\vertii{v\prth{t}}{L^{\infty}\prth{\mathbb{R}^{N}}}
}
= \infty.$},
it is clear that at least one of the two components of $\prth{u,v}$ becomes unbounded at the blowing time.
Nevertheless, it remains uncertain whether only one component or both components tend to infinity as the blowing-up occurs.
The first scenario is called non-simultaneous blow-up, whereas the second is referred to as simultaneous blow-up.
%
Souplet and Tayachi
have explored
similar issues
in 
\citeee{SoupletOptimal04}
for a Fujita-type system with reactions taking the form of $\prth{u^{\alpha}+v^{\beta},u^{\gamma}+v^{\delta}}$.
In their study, they succeeded to separate systematic simultaneous from possible non-simultaneous blow-up with conditions between $\alpha$ and $\gamma$ and between $\beta$ and $\delta$.
As for problem
\reff{SYS_heat_exchanger}{%
\renewcommand{\gap}{2.155mm}
$
\left\lbrace \begin{array}{l}
	\partial_{t}u = c\Delta u - \mu u + \nu v + \hspace{\gap}u^{1+\p},\\
	\partial_{t}v = d\Delta v + \mu u - \nu v + \kappa v^{1+\q},\\
	\prth{u,v}|_{t=0} = \prth{u_{0},v_{0}}.
\end{array} \right.
$
},
we leave this issue as an open question,
but we nevertheless emphasize that the integrability in time
of the $L^{\infty}\prth{\mathbb{R}^{N}}$-norm of the component that goes to infinity could be a decisive factor in determining the behavior of the other component. 
To illustrate this idea, let us consider the simple case
\reff{SYS_heat_exchanger}{%
\renewcommand{\gap}{2.155mm}
$
\left\lbrace \begin{array}{l}
	\partial_{t}u = c\Delta u - \mu u + \nu v + u^{1+\p},\\
	\partial_{t}v = d\Delta v + \mu u - \nu v,\\
	\prth{u,v}|_{t=0} = \prth{u_{0},v_{0}}.
\end{array} \right.
$
}$|_{\kappa=0}$
with $c=d=\mu=\nu=1$ supplemented with non-negative and identically constant datum $\prth{U_{0},V_{0}}$.
In this case the solution $\prth{U,V}$ does not depend on space and thus solves the ODE system
\renewcommand{\gap}{-3mm}
\begin{equation*}
\left\lbrace \begin{array}{llll}
	U' =  -&\hspace{\gap}U + V + U^{1+\p}, & \qquad & t>0,\\
	V' =   &\hspace{\gap}U - V, & \qquad & t>0.
\end{array} \right.
\end{equation*}
Observe that if we suppose $U_{0}>V_{0}$,
then (by subtracting the unknowns) $U>V$,
so that
$$
\prth{U+V}'
\quad = \quad
U^{1+\p}
\quad \geq \quad
\Prth{U/2 + V/2}^{1+\p}
\quad = \quad
\frac{1}{2^{1+\p}}\prth{U+V}^{1+\p},
$$
and $U+V$ blows-up at some finite time $T$ with at least $U$ becoming unbounded at this time.
Now assume the blow-up rate of $U$ to be of magnitude $a>0$, namely,
\begin{equation*}
\barb{\?\?\?\?\? C}\Big/\prth{T-t}^{a} \quad \leq \quad 
U\prth{t} \quad \leq \quad 
\bara{C}\Big/\prth{T-t}^{a},
\end{equation*}
for any $t\in \intervalleoo{0}{T}$ and some positive constants $\barb{\?\?\?\?\? C} \leq \bara{C}$. Turning to $V$, we have
$$
e^{-t}V_{0} + \barb{\?\?\?\?\? C}\int_{0}^{t} \frac{e^{-\prth{t-s}}}{\prth{T-s}^{a}} ds \quad \leq \quad 
V\prth{t} \quad \leq\quad 
e^{-t}V_{0} + \bara{C}\int_{0}^{t} \frac{e^{-\prth{t-s}}}{\prth{T-s}^{a}} ds.
$$
Thus, the boundedness of $V$ at the blowing time is equivalent to the convergence of the integral $\int_{0}^{T}\prth{T-s}^{-a}ds$, which occurs as soon as $a<1$. This conditions express that $U$ must diverge slowly in some sense if we want $V$ to be able to explode with it. Obviously, this is a formal argument and a rigorous proof would require a more detailed analysis of the system. Nevertheless, this example illustrates the importance of the blow-up rate in determining the behavior of solutions near the blowing-up.
\end{itemize}
\end{remark}

\section{Asymptotic behavior of the Heat exchanger system}\label{S3_asymptotic_behaviour_of_the_diffusion}

This section focuses on the large time behavior of the solution $\prth{\u , \v}$ to the Cauchy problem associated with the linear system
\reff{SYS_heat_exchanger_diff}{%
$\left\lbrace\begin{array}{l}
	\partial_{t}\u = c\Delta \u - \mu \u + \nu\v,\\
	\partial_{t}\v = d\Delta \v + \mu \u - \nu\v.
\end{array}\right.$
}.

We begin by providing the proof of Theorem \ref{TH_asymptotic_linear_Heat_exchanger},
which relies on Fourier analysis of the solution.
By doing so, we are able to separate $\prth{\u , \v}$ into two parts,
$\prth{\u_{\infty},\v_{\!\infty}}$
and
$\prth{\u_{e},\v_{\!e}}$,
where an
$L_{x}^{\infty}\prth{\mathbb{R}^{N}}/L^{1}_{\xi}\prth{\mathbb{R}^{N}}$-analysis
shows that the second part decays exponentially fast.
Finally we evaluate the time derivatives of $\,\hatt{\u}_{\infty}$ and $\,\hatt{\v}_{\!\infty}$
which brings us to the formulation of problem\renewcommand{\gap}{-3mm}
\reff{SYS_diff_non_loc_uncoupled}{%
$\begin{array}{ll}
	\partial_{t}\u_{\infty} &\hspace{\gap} = \mathcal{L}\?\u_{\infty},\\[1mm]
	\partial_{t}\v_{\!\infty} &\hspace{\gap} = \mathcal{L}\?\v_{\infty}.\\
\end{array}$
}-\reff{DEF_u0v0_diff_non_loc_uncoupled}{%
$\begin{matrix} 
\hat{u}_{\infty,0} : =
\dfrac{1}{2}
\crochets{
\Prth{1 - \dfrac{r}{\sqrt{s}}} \hat{u}_{0} +
\dfrac{\nu}{\sqrt{s}} \, \hat{v}_{\skp\skp\skp 0}},\\[5mm]
\hat{v}_{\skp\infty,0} : =
\dfrac{1}{2}
\crochets{\dfrac{\mu}{\sqrt{s}} \, \hat{u}_{0} +
\Prth{1 + \dfrac{r}{\sqrt{s}}} \hat{v}_{\skp\skp\skp 0}}. \end{matrix}$
}.

\bigskip

\begin{proof}[Proof of Theorem \ref{TH_asymptotic_linear_Heat_exchanger}]
Applying the Fourier transform to the equations in
\reff{SYS_heat_exchanger_diff}{%
$\left\lbrace\begin{array}{l}
	\partial_{t}\u = c\Delta \u - \mu \u + \nu\v,\\
	\partial_{t}\v = d\Delta \v + \mu \u - \nu\v.
\end{array}\right.$
}, namely
$$
\partial_{t}\u = c\Delta \u - \mu \u + \nu \v
\qquad
\text{and}
\qquad
\partial_{t}\v = d\Delta \v + \mu \u - \nu \v,
$$
we are led to the ODE system
\begin{equation}\label{ODE_Diffusion_Fourier_side}
\partial_{t} \begin{pmatrix} \,\hat{\u}\, \\ \,\hat{\v}\, \end{pmatrix} =
\underbrace{\begin{pmatrix}
-c\verti{\xi}^{2} - \mu & \nu \\ 
\mu & -d\verti{\xi}^{2} - \nu
\end{pmatrix}}
_{= :  A \prth{\xi}}
\begin{pmatrix} \,\hat{\u}\, \\ \,\hat{\v}\, \end{pmatrix}.
\end{equation}
It may be checked that the two eigenpairs,
$\prth{\lambda_{+}\prth{\xi},e_{+}\prth{\xi}}$ and
$\prth{\lambda_{-}\prth{\xi},e_{-}\prth{\xi}}$,
of $A$ are given by
\begin{equation}\label{DEF_eigen_elements_matrix_A}
\lambda_{\pm} =
-\Prth{\frac{c+d}{2}\verti{\xi}^{2}+\frac{\mu+\nu}{2}} \pm \sqrt{s}
\qquad
\text{and}
\qquad
e{\pm} =
\begin{pmatrix} \nu \\ r \pm \sqrt{s} \end{pmatrix},
\end{equation}
where $r$ and $s$ are defined in
\reff{DEF_r_and_s_preuve_th_linear}{%
$\begin{array}{l}
	r\prth{\xi} : = \frac{c-d}{2}\verti{\xi}^{2} + \frac{\mu-\nu}{2},\\[2mm]
	s\prth{\xi} : = \mu\nu + \crochets{r\prth{\xi}}^{2}.
\end{array}$
}.
As a result, we obtain the following expressions for $\hatt{\u}$ and $\hatt{\v}$:
\renewcommand{\gap}{3mm}
\renewcommand{\gapp}{2mm}
\begin{equation*}\label{EQ_diffusion_Fourier_side}
\begin{pmatrix} \,\hat{\u}\prth{t,\xi} \\[\gapp] \,\hat{\v}\prth{t,\xi} \end{pmatrix}
=
\frac{1}{2}
\begin{pmatrix}
\Prth{1 - \frac{r}{\sqrt{s}}}e^{t\lambda_{+}}+\Prth{1 + \frac{r}{\sqrt{s}}}e^{t\lambda_{-}} &
\frac{\nu}{\sqrt{s}}\Prth{e^{t\lambda_{+}}-e^{t\lambda_{-}}}\\[\gap]
\frac{\mu}{\sqrt{s}}\Prth{e^{t\lambda_{+}}-e^{t\lambda_{-}}} &
\Prth{1 + \frac{r}{\sqrt{s}}}e^{t\lambda_{+}}+\Prth{1 - \frac{r}{\sqrt{s}}}e^{t\lambda_{-}}
\end{pmatrix}
\!\!
\begin{pmatrix} \,\hat{u}_{0}\prth{\xi} \\[\gapp] \,\hat{v}_{\skp\skp\skp 0}\prth{\xi} \end{pmatrix}
\!.
\end{equation*}
We split $\hatt{\u}$ and $\hatt{\v}$ into
\begin{equation}\label{EQ_persistent_part_Fourier_side}
\begin{pmatrix} \,\hat{\u}_{\infty}\prth{t,\xi} \\[\gapp] \,\hat{\v}_{\!\infty}\prth{t,\xi} \end{pmatrix}
: =
\frac{e^{t\lambda_{+}}}{2}
\begin{pmatrix}
1 - \frac{r}{\sqrt{s}}&
\frac{\nu}{\sqrt{s}}\\[\gap]
\frac{\mu}{\sqrt{s}} &
1 + \frac{r}{\sqrt{s}}
\end{pmatrix}
\begin{pmatrix} \,\hat{u}_{0}\prth{\xi} \\[\gapp] \,\hat{v}_{\skp\skp\skp 0}\prth{\xi} \end{pmatrix}
\end{equation}
and
\begin{equation}\label{EQ_evanescent_part_Fourier_side}
\begin{pmatrix} \,\hat{\u}_{e}\prth{t,\xi} \\[\gapp] \,\hat{\v}_{\!e}\prth{t,\xi} \end{pmatrix}
: =
\frac{e^{t\lambda_{-}}}{2}
\begin{pmatrix}
1 + \frac{r}{\sqrt{s}}&
-\frac{\nu}{\sqrt{s}}\\[\gap]
-\frac{\mu}{\sqrt{s}} &
1 - \frac{r}{\sqrt{s}}
\end{pmatrix}
\begin{pmatrix} \,\hat{u}_{0}\prth{\xi} \\[\gapp] \,\hat{v}_{\skp\skp\skp 0}\prth{\xi} \end{pmatrix},
\end{equation}
so that
$$
\hat{\u}=\hat{\u}_{\infty}+\hat{\u}_{e}
\qquad
\text{and}
\qquad
\hat{\v}=\hat{\v}_{\!\infty}+\hat{\v}_{\!e},
$$
and the boundedness of the matrices in\renewcommand{\gap}{3mm}\renewcommand{\gapp}{2mm}
\reff{EQ_persistent_part_Fourier_side}{%
$\begin{pmatrix} \,\hat{\u}_{\infty}\prth{t,\xi} \\[\gapp] \,\hat{\v}_{\!\infty}\prth{t,\xi} \end{pmatrix}
: =
\frac{e^{t\lambda_{+}}}{2}
\begin{pmatrix}
1 - \frac{r}{\sqrt{s}}&
\frac{\nu}{\sqrt{s}}\\[\gap]
\frac{\mu}{\sqrt{s}} &
1 + \frac{r}{\sqrt{s}}
\end{pmatrix}
\begin{pmatrix} \,\hat{u}_{0}\prth{\xi} \\[\gapp] \,\hat{v}_{\skp\skp\skp 0}\prth{\xi} \end{pmatrix}.$
}
and\renewcommand{\gap}{3mm}\renewcommand{\gapp}{2mm}
\reff{EQ_evanescent_part_Fourier_side}{%
$\begin{pmatrix} \,\hat{\u}_{e}\prth{t,\xi} \\[\gapp] \,\hat{\v}_{\!e}\prth{t,\xi} \end{pmatrix}
: =
\frac{e^{t\lambda_{-}}}{2}
\begin{pmatrix}
1 + \frac{r}{\sqrt{s}}&
-\frac{\nu}{\sqrt{s}}\\[\gap]
-\frac{\mu}{\sqrt{s}} &
1 - \frac{r}{\sqrt{s}}
\end{pmatrix}
\begin{pmatrix} \,\hat{u}_{0}\prth{\xi} \\[\gapp] \,\hat{v}_{\skp\skp\skp 0}\prth{\xi} \end{pmatrix}.$
}
provides the necessary integrability to go back into the spatial domain.
As a result, we can write
$$
\u=\u_{\infty}+\u_{e}
\qquad
\text{and}
\qquad
\v=\v_{\!\infty}+\v_{\!e}.
$$

The remainder of the proof consists in demonstrating that the evanescent parts
$\u_{e}\prth{t}$ and $\v_{\!e}\prth{t}$
decay exponentially fast in
$L^{\infty}\prth{\mathbb{R}^{N}}$.
Following this, we establish that
$\prth{\u_{\infty},\v_{\!\infty}}$
indeed serves as the solution to the Cauchy problem\renewcommand{\gap}{-3mm}
\reff{SYS_diff_non_loc_uncoupled}{%
$\begin{array}{ll}
	\partial_{t}\u_{\infty} &\hspace{\gap} = \mathcal{L}\?\u_{\infty},\\[1mm]
	\partial_{t}\v_{\!\infty} &\hspace{\gap} = \mathcal{L}\?\v_{\infty}.\\
\end{array}$
}-\reff{DEF_u0v0_diff_non_loc_uncoupled}{%
$\begin{matrix} 
\hat{u}_{\infty,0} : =
\dfrac{1}{2}
\crochets{
\Prth{1 - \dfrac{r}{\sqrt{s}}} \hat{u}_{0} +
\dfrac{\nu}{\sqrt{s}} \, \hat{v}_{\skp\skp\skp 0}},\\[5mm]
\hat{v}_{\skp\infty,0} : =
\dfrac{1}{2}
\crochets{\dfrac{\mu}{\sqrt{s}} \, \hat{u}_{0} +
\Prth{1 + \dfrac{r}{\sqrt{s}}} \hat{v}_{\skp\skp\skp 0}}. \end{matrix}$
}.

\bigskip

\noindent\hspace{-0.5mm}\textbullet\,\textit{Vanishing of $\prth{\u_{e}\prth{t},\v_{\!e}\prth{t}}$.}
Thanks to the Hausdorff-Young inequalities
\reff{INEQUALITY_Hausdorff_Young}{%
$
\begin{array}{l}
	\vertii{f}{L^{\infty}\prth{\mathbb{R}^{N}}} \leq
\prth{2\pi}^{-N}
\vertii{\hat{f}}{L^{1}\prth{\mathbb{R}^{N}}}, \\[2mm]
	\vertii{\hat{f}}{L^{\infty}\prth{\mathbb{R}^{N}}} \leq
\vertii{f}{L^{1}\prth{\mathbb{R}^{N}}}. 
\end{array}
$
},
controlling
$\hatt{\u}_{e}\prth{t}$ and $\hatt{\v}_{\!e}\prth{t}$
in $L^{1}\prth{\mathbb{R}^{N}}$ enables us to obtain estimates on 
$\u_{e}\prth{t}$ and $\v_{\!e}\prth{t}$
in $L^{\infty}\prth{\mathbb{R}^{N}}$.
To accomplish this, we first analyze the components of\renewcommand{\gap}{3mm}\renewcommand{\gapp}{2mm}
\reff{EQ_evanescent_part_Fourier_side}{%
$\begin{pmatrix} \,\hat{\u}_{e}\prth{t,\xi} \\[\gapp] \,\hat{\v}_{\!e}\prth{t,\xi} \end{pmatrix}
: =
\frac{e^{t\lambda_{-}}}{2}
\begin{pmatrix}
1 + \frac{r}{\sqrt{s}}&
-\frac{\nu}{\sqrt{s}}\\[\gap]
-\frac{\mu}{\sqrt{s}} &
1 - \frac{r}{\sqrt{s}}
\end{pmatrix}
\begin{pmatrix} \,\hat{u}_{0}\prth{\xi} \\[\gapp] \,\hat{v}_{\skp\skp\skp 0}\prth{\xi} \end{pmatrix}.$
}
which result in the following estimates for all $\xi\in\mathbb{R}^{N}$:
\begin{equation}\label{CONTROL_lambda_moins}
\lambda_{-} \leq
-\Prth{\frac{c+d}{2}\verti{\xi}^{2} + \frac{\Prth{\sqrt{\mu}+\sqrt{\nu}\,}^{2}}{2}},
\end{equation}
\begin{equation}\label{CONTROL_elts_evanescent_part}
\verti{1\pm\frac{r}{\sqrt{s}}} \leq 2,
\qquad
\qquad
\frac{\nu}{\sqrt{s}} \leq \sqrt{\frac{\nu}{\mu}},
\qquad
\qquad
\frac{\mu}{\sqrt{s}} \leq \sqrt{\frac{\mu}{\nu}},
\end{equation}
and using
\reff{INEQUALITY_Hausdorff_Young}{%
$
\begin{array}{l}
	\vertii{f}{L^{\infty}\prth{\mathbb{R}^{N}}} \leq
\prth{2\pi}^{-N}
\vertii{\hat{f}}{L^{1}\prth{\mathbb{R}^{N}}}, \\[2mm]
	\vertii{\hat{f}}{L^{\infty}\prth{\mathbb{R}^{N}}} \leq
\vertii{f}{L^{1}\prth{\mathbb{R}^{N}}}. 
\end{array}
$
},
\begin{equation}\label{CONTROL_hat_u0_v0}
\Big\vert\,\hat{u}_{0}\prth{\xi}\Big\vert\leq
\vertii{u_{0}}{L^{1}\prth{\mathbb{R}^{N}}}
\qquad
\text{and}
\qquad
\Big\vert\,\hat{v}_{\skp\skp\skp 0}\prth{\xi}\Big\vert\leq 
\vertii{v_{0}}{L^{1}\prth{\mathbb{R}^{N}}}.
\end{equation}
Then, by considering the expression for $\hatt{\u}_{e}\prth{t}$ in\renewcommand{\gap}{3mm}\renewcommand{\gapp}{2mm}
\reff{EQ_evanescent_part_Fourier_side}{%
$\begin{pmatrix} \,\hat{\u}_{e}\prth{t,\xi} \\[\gapp] \,\hat{\v}_{\!e}\prth{t,\xi} \end{pmatrix}
: =
\frac{e^{t\lambda_{-}}}{2}
\begin{pmatrix}
1 + \frac{r}{\sqrt{s}}&
-\frac{\nu}{\sqrt{s}}\\[\gap]
-\frac{\mu}{\sqrt{s}} &
1 - \frac{r}{\sqrt{s}}
\end{pmatrix}
\begin{pmatrix} \,\hat{u}_{0}\prth{\xi} \\[\gapp] \,\hat{v}_{\skp\skp\skp 0}\prth{\xi} \end{pmatrix}.$
},
the estimation of its $L^{1}$-norm gives, for $t>1$,
\begin{align}
 	\vertii{\,\hat{\u}_{e}\prth{t}}{L^{1}\prth{\mathbb{R}^{N}}}
	&= \frac{1}{2}\int_{\mathbb{R}^{N}}^{}\verti{\Prth{1+\frac{r}{\sqrt{s}}}\hat{u}_{0}\prth{\xi} - \frac{\nu}{\sqrt{s}}\,\hat{v}_{\skp\skp\skp 0}\prth{\xi}}e^{t\lambda_{-}} \, d\xi \nonumber\\
 	&\leq \frac{1}{2}\Prth{2\vertii{u_{0}}{L^{1}\prth{\mathbb{R}^{N}}}+\sqrt{\frac{\nu}{\mu}}\vertii{v_{0}}{L^{1}\prth{\mathbb{R}^{N}}}}e^{-t \frac{\Prth{\sqrt{\mu}+\sqrt{\nu}\hspace{0.3mm}}^{2}}{2}}\int_{\mathbb{R}^{N}}^{}e^{-t\frac{c+d}{2}\verti{\xi}^{2}}d\xi \nonumber\\[2mm]
 	&\leq
 	\max\Prth{1,\sqrt{\frac{\nu}{4\mu}}\,}
 	\Prth{\frac{2\pi}{c+d}}^{N/2}
 	\Prth{\vertii{u_{0}}{L^{1}\prth{\mathbb{R}^{N}}}+\vertii{v_{0}}{L^{1}\prth{\mathbb{R}^{N}}}}
 	e^{-t \frac{\Prth{\sqrt{\mu}+\sqrt{\nu}\hspace{0.3mm}}^{2}}{2}},\label{CONTROL_hat_ue_L_1}
\end{align}
where
\reff{CONTROL_lambda_moins}{%
$\lambda_{-} \leq
-\Prth{\frac{c+d}{2}\verti{\xi}^{2} + \frac{\Prth{\sqrt{\mu}+\sqrt{\nu}\,}^{2}}{2}}.$
},
\reff{CONTROL_elts_evanescent_part}{%
$\verti{1\pm\frac{r}{\sqrt{s}}} \leq 2,
\qquad
\frac{\nu}{\sqrt{s}} \leq \sqrt{\frac{\nu}{\mu}},
\qquad
\frac{\mu}{\sqrt{s}} \leq \sqrt{\frac{\mu}{\nu}}.$
}
and
\reff{CONTROL_hat_u0_v0}{%
$\Big\vert\,\hat{u}_{0}\prth{\xi}\Big\vert\leq
\vertii{u_{0}}{L^{1}\prth{\mathbb{R}^{N}}},
\qquad
\Big\vert\,\hat{v}_{\skp\skp\skp 0}\prth{\xi}\Big\vert\leq 
\vertii{v_{0}}{L^{1}\prth{\mathbb{R}^{N}}}.$
}
have been used to go from the first to the second line.
Performing a similar calculation for $\hatt{\v}_{\!e}\prth{t}$ leads to
\begin{equation}\label{CONTROL_hat_ve_L_1}
\vertii{\,\hat{\v}_{\!e}\prth{t}}{L^{1}\prth{\mathbb{R}^{N}}}\leq
\max\Prth{1,\sqrt{\frac{\mu}{4\nu}}\,}
\Prth{\frac{2\pi}{c+d}}^{N/2}
\Prth{\vertii{u_{0}}{L^{1}\prth{\mathbb{R}^{N}}}+\vertii{v_{0}}{L^{1}\prth{\mathbb{R}^{N}}}}e^{-t \frac{\Prth{\sqrt{\mu}+\sqrt{\nu}\hspace{0.3mm}}^{2}}{2}},
\end{equation}
so that, using again
\reff{INEQUALITY_Hausdorff_Young}{%
$
\begin{array}{l}
	\vertii{f}{L^{\infty}\prth{\mathbb{R}^{N}}} \leq
\prth{2\pi}^{-N}
\vertii{\hat{f}}{L^{1}\prth{\mathbb{R}^{N}}}, \\[2mm]
	\vertii{\hat{f}}{L^{\infty}\prth{\mathbb{R}^{N}}} \leq
\vertii{f}{L^{1}\prth{\mathbb{R}^{N}}}. 
\end{array}
$
},
we can retrieve\renewcommand{\gap}{-3mm}
\reff{CONTROL_conv_expo_diff_non_loc_uncoupled}{%
$\begin{array}{ll}
\vertii{\u\prth{t}-\u_{\infty}\prth{t}}{L^{\infty}\prth{\mathbb{R}^{N}}}
\leq
k
&\hspace{\gap}\Prth{\vertii{u_{0}}{L^{1}\prth{\mathbb{R}^{N}}}+\vertii{v_{0}}{L^{1}\prth{\mathbb{R}^{N}}}}
e^{-t \frac{\Prth{\sqrt{\mu}+\sqrt{\nu}\hspace{0.3mm}}^{2}}{2}}, \\[2mm]
\vertii{\v\prth{t}-\v_{\!\infty}\prth{t}}{L^{\infty}\prth{\mathbb{R}^{N}}}
\leq
k'
&\hspace{\gap}\Prth{\vertii{u_{0}}{L^{1}\prth{\mathbb{R}^{N}}}+\vertii{v_{0}}{L^{1}\prth{\mathbb{R}^{N}}}}
e^{-t \frac{\Prth{\sqrt{\mu}+\sqrt{\nu}\hspace{0.3mm}}^{2}}{2}}, \\[2mm]
\text{for all }t>1.
\end{array}$
}
from
\reff{CONTROL_hat_ue_L_1}{%
$\vertii{\,\hat{\u}_{e}\prth{t}}{L^{1}\prth{\mathbb{R}^{N}}}
\leq
\max\Prth{1,\sqrt{\frac{\nu}{4\mu}}\,}
\Prth{\frac{2\pi}{c+d}}^{N/2}
\Prth{\vertii{u_{0}}{L^{1}\prth{\mathbb{R}^{N}}}+\vertii{v_{0}}{L^{1}\prth{\mathbb{R}^{N}}}}
e^{-t \frac{\Prth{\sqrt{\mu}+\sqrt{\nu}\hspace{0.3mm}}^{2}}{2}}.$
}
and
\reff{CONTROL_hat_ve_L_1}{%
$\vertii{\,\hat{\v}_{\!e}\prth{t}}{L^{1}\prth{\mathbb{R}^{N}}}\leq
\max\Prth{1,\sqrt{\frac{\mu}{4\nu}}\,}
\Prth{\frac{2\pi}{c+d}}^{N/2}
\Prth{\vertii{u_{0}}{L^{1}\prth{\mathbb{R}^{N}}}+\vertii{v_{0}}{L^{1}\prth{\mathbb{R}^{N}}}}e^{-t \frac{\Prth{\sqrt{\mu}+\sqrt{\nu}\hspace{0.3mm}}^{2}}{2}}.$
}
with
$$
k : =
\frac{\max\Big(1,\sqrt{\frac{\nu}{4\mu}}\?\?\?\Big)}{\Prth{2\pi\prth{c+d}}^{N/2}}
\qquad 
\text{and}
\qquad 
k' : =
\frac{\max\Prth{1,\sqrt{\frac{\mu}{4\nu}}\?\?\?}}{\Prth{2\pi\prth{c+d}}^{N/2}}.
$$

\bigskip

\noindent\hspace{-0.5mm}\textbullet\,\textit{Problem satisfied by $\prth{\u_{\infty},\v_{\!\infty}}$.}
Taking first $t=0$ in\renewcommand{\gap}{3mm}\renewcommand{\gapp}{2mm}
\reff{EQ_persistent_part_Fourier_side}{%
$\begin{pmatrix} \,\hat{\u}_{\infty}\prth{t,\xi} \\[\gapp] \,\hat{\v}_{\!\infty}\prth{t,\xi} \end{pmatrix}
: =
\frac{e^{t\lambda_{+}}}{2}
\begin{pmatrix}
1 - \frac{r}{\sqrt{s}}&
\frac{\nu}{\sqrt{s}}\\[\gap]
\frac{\mu}{\sqrt{s}} &
1 + \frac{r}{\sqrt{s}}
\end{pmatrix}
\begin{pmatrix} \,\hat{u}_{0}\prth{\xi} \\[\gapp] \,\hat{v}_{\skp\skp\skp 0}\prth{\xi} \end{pmatrix}.$
}
straightly recovers the initial datum
$\prth{\?\?\?\?\hatt{u}_{\infty,0},\hatt{v}_{\skp\infty,0}}$
in
\reff{DEF_u0v0_diff_non_loc_uncoupled}{%
$\begin{matrix} 
\hat{u}_{\infty,0} : =
\dfrac{1}{2}
\crochets{
\Prth{1 - \dfrac{r}{\sqrt{s}}} \hat{u}_{0} +
\dfrac{\nu}{\sqrt{s}} \, \hat{v}_{\skp\skp\skp 0}},\\[5mm]
\hat{v}_{\skp\infty,0} : =
\dfrac{1}{2}
\crochets{\dfrac{\mu}{\sqrt{s}} \, \hat{u}_{0} +
\Prth{1 + \dfrac{r}{\sqrt{s}}} \hat{v}_{\skp\skp\skp 0}}. \end{matrix}$
}.
Then, notice that $\lambda_{+}$ defined in
\reff{DEF_eigen_elements_matrix_A}{%
$\lambda_{\pm} =
-\Prth{\frac{c+d}{2}\verti{\xi}^{2}+\frac{\mu+\nu}{2}} \pm \sqrt{s}.$
}
and $L$ defined in
\reff{DEF_L}{%
$L\prth{\xi} : =
\sqrt{s\prth{\xi}} - \Prth{\frac{c+d}{2}\verti{\xi}^{2} + \frac{\mu+\nu}{2}}.$
}
are actually the same functions.
Hence,
by working on $\partial_{t}\u_{\infty}$ from the frequency domain, we have
$$
{\Fb}\crochets{\partial_{t}\u_{\infty}} \,=\,
\partial_{t}\, \hat{\u}_{\infty}\,=\,
\lambda_{+} \!\skp\times \hat{\u}_{\infty}\,=\,
L \times \hat{\u}_{\infty}\,=\,
{\Fb}\crochets{\mathcal{L}\?\u_{\infty}},
$$
where the last equality precisely comes from the definition of the operator $\mathcal{L}$ in \reff{DEF_cal_L}{%
${\Fb} \crochets{\mathcal{L}f} = L \times {\Fb} \crochets{f},
\qquad 
\text{for all }f \in \pazocal{S}\prth{\mathbb{R}^{N}}.$
}.
The same calculation for $\partial_{t}\v_{\!\infty}$ gives
$$
{\Fb}\crochets{\partial_{t}\v_{\!\infty}} =
{\Fb}\crochets{\mathcal{L}\?\v_{\!\infty}},
$$
and thus,
$\prth{\u_{\infty},\v_{\!\infty}}$
solves\renewcommand{\gap}{-3mm}
\reff{SYS_diff_non_loc_uncoupled}{%
$\begin{array}{ll}
	\partial_{t}\u_{\infty} &\hspace{\gap} = \mathcal{L}\?\u_{\infty},\\[1mm]
	\partial_{t}\v_{\!\infty} &\hspace{\gap} = \mathcal{L}\?\v_{\infty}.\\
\end{array}$
}.
\end{proof}


\bigskip
\medskip

We now proceed with the proof of Corollary
\ref{CORO_decay_linear_Heat_exchanger}
regarding the decay rate.
We start by obtaining uniform controls on
$\u\prth{t}$ and $\v\prth{t}$ for $t>1$, specifically,
\renewcommand{\gap}{-3.5mm}
\begin{equation}\label{CONTROL_heristic_uv_decay_rate_diff}
\begin{array}{ll}
	\vertii{\u\prth{t}}{L^{\infty}\prth{\mathbb{R}^{N}}}
	\leq
	\widetilde{\ell}
	&\hspace{\gap}\Big(
	\vertii{u_{0}}{L^{1}\prth{\mathbb{R}^{N}}}+
	\vertii{v_{0}}{L^{1}\prth{\mathbb{R}^{N}}}+
	\vertii{\?\?\?\?\?\hat{u}_{0}}{L^{1}\prth{\mathbb{R}^{N}}}+
	\vertii{\?\?\?\?\hat{v}_{\skp\skp\skp 0}}{L^{1}\prth{\mathbb{R}^{N}}}\left.\Big)\right/t^{N/2}, \\[2.5mm] 
	\vertii{\v\prth{t}}{L^{\infty}\prth{\mathbb{R}^{N}}}
	\leq
	\widetilde{\ell'}
	&\hspace{\gap}\Big(
	\vertii{u_{0}}{L^{1}\prth{\mathbb{R}^{N}}}+
	\vertii{v_{0}}{L^{1}\prth{\mathbb{R}^{N}}}+
	\vertii{\?\?\?\?\?\hat{u}_{0}}{L^{1}\prth{\mathbb{R}^{N}}}+
	\vertii{\?\?\?\?\hat{v}_{\skp\skp\skp 0}}{L^{1}\prth{\mathbb{R}^{N}}}\left.\Big)\right/t^{N/2}.
\end{array}
\end{equation}
The approach used to derive\renewcommand{\gap}{-3.5mm}
\reff{CONTROL_heristic_uv_decay_rate_diff}{%
$\begin{array}{ll}
	\vertii{\u\prth{t}}{L^{\infty}\prth{\mathbb{R}^{N}}}
	\leq
	\widetilde{\ell}
	&\hspace{\gap}\Big(
	\vertii{u_{0}}{L^{1}\prth{\mathbb{R}^{N}}}+
	\vertii{v_{0}}{L^{1}\prth{\mathbb{R}^{N}}}+
	\vertii{\?\?\?\?\?\hat{u}_{0}}{L^{1}\prth{\mathbb{R}^{N}}}+
	\vertii{\?\?\?\?\hat{v}_{\skp\skp\skp 0}}{L^{1}\prth{\mathbb{R}^{N}}}\left.\Big)\right/t^{N/2}, \\[2.5mm] 
	\vertii{\v\prth{t}}{L^{\infty}\prth{\mathbb{R}^{N}}}
	\leq
	\widetilde{\ell'}
	&\hspace{\gap}\Big(
	\vertii{u_{0}}{L^{1}\prth{\mathbb{R}^{N}}}+
	\vertii{v_{0}}{L^{1}\prth{\mathbb{R}^{N}}}+
	\vertii{\?\?\?\?\?\hat{u}_{0}}{L^{1}\prth{\mathbb{R}^{N}}}+
	\vertii{\?\?\?\?\hat{v}_{\skp\skp\skp 0}}{L^{1}\prth{\mathbb{R}^{N}}}\left.\Big)\right/t^{N/2}.
\end{array}$
}
involves splitting high versus low frequencies of the solution and
recognizing that only the low frequencies contribute algebraically at large times
---
refer to
\citeee{ChasseigneAsymptotic06}
and
\citeee{AlfaroFujita17}
for related phenomena.
Given this observation, it is sufficient to note that $\u\prth{t}$ and $\v\prth{t}$ are bounded for $t\leq 1$ to establish\renewcommand{\gap}{-3.5mm}
\reff{CONTROL_uv_decay_rate_diff}{%
$\begin{array}{ll}
	\vertii{\u\prth{t}}{L^{\infty}\prth{\mathbb{R}^{N}}}
	\leq
	\ell
	&\hspace{\gap}\Big(
	\vertii{u_{0}}{L^{1}\prth{\mathbb{R}^{N}}}\skp+\skp
	\vertii{v_{0}}{L^{1}\prth{\mathbb{R}^{N}}}\skp+\skp
	\vertii{\?\?\?\?\?\hat{u}_{0}}{L^{1}\prth{\mathbb{R}^{N}}}\skp+\skp
	\vertii{\?\?\?\?\hat{v}_{\skp\skp\skp 0}}{L^{1}\prth{\mathbb{R}^{N}}}\left.\Big)\right/\prth{1+t}^{N/2},\\[2.5mm] 
	\vertii{\v\prth{t}}{L^{\infty}\prth{\mathbb{R}^{N}}}
	\leq
	\ell'
	&\hspace{\gap}\Big(
	\vertii{u_{0}}{L^{1}\prth{\mathbb{R}^{N}}}\skp+\skp
	\vertii{v_{0}}{L^{1}\prth{\mathbb{R}^{N}}}\skp+\skp
	\vertii{\?\?\?\?\?\hat{u}_{0}}{L^{1}\prth{\mathbb{R}^{N}}}\skp+\skp
	\vertii{\?\?\?\?\hat{v}_{\skp\skp\skp 0}}{L^{1}\prth{\mathbb{R}^{N}}}\left.\Big)\right/\prth{1+t}^{N/2},\\[2.5mm] 
	\text{for all }t>0.&~
\end{array}$
}
for all times.

\bigskip

\begin{proof}[Proof of Corollary \ref{CORO_decay_linear_Heat_exchanger}]
Recall that
$\u_{e}=\u-\u_{\infty}$ and $\v_{\!e}=\v-\v_{\!\infty}$. Referring to\renewcommand{\gap}{-3mm}
\reff{CONTROL_conv_expo_diff_non_loc_uncoupled}{%
$\begin{array}{ll}
\vertii{\u\prth{t}-\u_{\infty}\prth{t}}{L^{\infty}\prth{\mathbb{R}^{N}}}
\leq
k
&\hspace{\gap}\Prth{\vertii{u_{0}}{L^{1}\prth{\mathbb{R}^{N}}}+\vertii{v_{0}}{L^{1}\prth{\mathbb{R}^{N}}}}
e^{-t \frac{\Prth{\sqrt{\mu}+\sqrt{\nu}\hspace{0.3mm}}^{2}}{2}}, \\[2mm]
\vertii{\v\prth{t}-\v_{\!\infty}\prth{t}}{L^{\infty}\prth{\mathbb{R}^{N}}}
\leq
k'
&\hspace{\gap}\Prth{\vertii{u_{0}}{L^{1}\prth{\mathbb{R}^{N}}}+\vertii{v_{0}}{L^{1}\prth{\mathbb{R}^{N}}}}
e^{-t \frac{\Prth{\sqrt{\mu}+\sqrt{\nu}\hspace{0.3mm}}^{2}}{2}},
\end{array}$
}
we can write the following for $t>1$:
\renewcommand{\gap}{-3mm}
\begin{equation}\label{CONTROL_quick_glance_ue_ve_decay_rate_diff}
\begin{array}{ll}
	\vertii{\u_{e}\prth{t}}{L^{\infty}\prth{\mathbb{R}^{N}}}
	\leq
	\widetilde{\ell}_{e}
	&\hspace{\gap}\left.\Prth{\vertii{u_{0}}{L^{1}\prth{\mathbb{R}^{N}}}+\vertii{v_{0}}{L^{1}\prth{\mathbb{R}^{N}}}}\right/t^{N/2}, \\[2.5mm] 
	\vertii{\v_{\!e}\prth{t}}{L^{\infty}\prth{\mathbb{R}^{N}}}
	\leq
	\widetilde{\ell'}_{\!\!e}
	&\hspace{\gap}\left.\Prth{\vertii{u_{0}}{L^{1}\prth{\mathbb{R}^{N}}}+\vertii{v_{0}}{L^{1}\prth{\mathbb{R}^{N}}}}\right/t^{N/2},
\end{array}
\end{equation}
where $\widetilde{\ell}_{e}$ and $\widetilde{\ell'}_{\!\!e}$ are positive constants depending on $\mu$, $\nu$ and respectively $k$ and $k'$
--- hence
$N,c,d,\mu,\nu$.

We then shift to the consistent part of this proof that concerns the estimations of $\u_{\infty}$ and $\v_{\!\infty}$. Based on the Taylor expansion of $L$ near the origin, specifically,
\begin{equation*}
L\prth{\xi} = - \frac{c\nu+d\mu}{\mu+\nu}\verti{\xi}^{2} + o\Prth{\verti{\xi}^{2}},
\qquad
\text{as }\verti{\xi}\to 0,
\end{equation*}
we can chose a small enough positive constant $a$ (depending on $L$, hence $c,d,\mu,\nu$) so that
\begin{equation}\label{CONTROL_L_low_freq}
L\prth{\xi} \leq - \frac{c\nu+d\mu}{2\prth{\mu+\nu}}\verti{\xi}^{2},
\qquad
\text{as soon as }\verti{\xi}\leq a.
\end{equation}
Basic calculus may be employed to show that $L$ is radially decreasing.
This enables to find a positive constant $\eta$ (depending on $L$ and $a$, and consequently on $c,d,\mu,\nu$)
that guarantees
\begin{equation}\label{CONTROL_L_high_freq}
L\prth{\xi} \leq - \eta,
\qquad
\text{as soon as }\verti{\xi}\geq a.
\end{equation}
In the same manner as we demonstrated the vanishing of $\prth{\u_{e},\v_{\!e}}$, we use the Hausdorff-Young inequalities
\reff{INEQUALITY_Hausdorff_Young}{%
$
\begin{array}{l}
	\vertii{f}{L^{\infty}\prth{\mathbb{R}^{N}}} \leq
\prth{2\pi}^{-N}
\vertii{\hat{f}}{L^{1}\prth{\mathbb{R}^{N}}}, \\[2mm]
	\vertii{\hat{f}}{L^{\infty}\prth{\mathbb{R}^{N}}} \leq
\vertii{f}{L^{1}\prth{\mathbb{R}^{N}}}. 
\end{array}
$
}
to control
$\vertii{\u_{\infty}\prth{t}}{L^{\infty}\prth{\mathbb{R}^{N}}}$ and
$\vertii{\v_{\!\infty}\prth{t}}{L^{\infty}\prth{\mathbb{R}^{N}}}$
through estimates on
$\vertii{\,\hatt{\u}_{\infty}\prth{t}}{L^{1}\prth{\mathbb{R}^{N}}}$ and
$\vertii{\,\hatt{\v}_{\!\infty}\prth{t}}{L^{1}\prth{\mathbb{R}^{N}}}$.
Starting with $\u_{\infty}$, we have
\renewcommand{\gap}{-25mm}
\renewcommand{\gapp}{-22mm}
\begin{align}
 	\vertii{\,\hat{\u}_{\infty}\prth{t}}{L^{1}\prth{\mathbb{R}^{N}}} 
	&\,\leq &\hspace{\gap}\underbrace{\int_{\verti{\xi}\leq a}^{}\verti{\,\hat{\u}_{\infty}\prth{t,\xi}}d\xi}_{} &\,\quad +
	      &\hspace{\gapp}\underbrace{\int_{\verti{\xi}\geq a}^{}\verti{\,\hat{\u}_{\infty}\prth{t,\xi}}d\xi}_{}\label{GO_BACK}\\[-2.5mm]
 	&\,= :  &\hspace{\gap}\vertii{\uLow\prth{t}}{L^{1}\prth{\mathbb{R}^{N}}} &\,\quad +
 	      &\hspace{\gapp}\vertii{\uHigh\prth{t}}{L^{1}\prth{\mathbb{R}^{N}}}.\nonumber
\end{align}
For the high frequencies, we express $\hatt{\u}_{\infty}$ from\renewcommand{\gap}{3mm}\renewcommand{\gapp}{2mm}
\reff{EQ_persistent_part_Fourier_side}{%
$\begin{pmatrix} \,\hat{\u}_{\infty}\prth{t,\xi} \\[\gapp] \,\hat{\v}_{\!\infty}\prth{t,\xi} \end{pmatrix}
= : 
\frac{e^{t\lambda_{+}}}{2}
\begin{pmatrix}
1 - \frac{r}{\sqrt{s}}&
\frac{\nu}{\sqrt{s}}\\[\gap]
\frac{\mu}{\sqrt{s}} &
1 + \frac{r}{\sqrt{s}}
\end{pmatrix}
\begin{pmatrix} \,\hat{u}_{0}\prth{\xi} \\[\gapp] \,\hat{v}_{\skp\skp\skp 0}\prth{\xi} \end{pmatrix}.$
}
with $\lambda_{+}=L$. This yields
\begin{equation*}
\vertii{\uHigh\prth{t}}{L^{1}\prth{\mathbb{R}^{N}}} =
\frac{1}{2} \int_{\verti{\xi}\geq a}^{}\verti{\Prth{1-\frac{r}{\sqrt{s}}}\hat{u}_{0}\prth{\xi} + \frac{\nu}{\sqrt{s}}\,\hat{v}_{\skp\skp\skp 0}\prth{\xi}}e^{tL\prth{\xi}}d\xi.
\end{equation*}
Next, using controls on $L$
\reff{CONTROL_L_high_freq}{%
$L\prth{\xi} \leq - \eta,
\qquad
\text{as soon as }\verti{\xi}\geq a.$
}
and on $1-r/\sqrt{s}$ and ${\nu}/{\sqrt{s}}$
\reff{CONTROL_elts_evanescent_part}{%
$\verti{1\pm\frac{r}{\sqrt{s}}} \leq 2,
\qquad
\frac{\nu}{\sqrt{s}} \leq \sqrt{\frac{\nu}{\mu}},
\qquad
\frac{\mu}{\sqrt{s}} \leq \sqrt{\frac{\mu}{\nu}}.$
},
we eventually get
\begin{equation}\label{CONTROL_hat_u_infty_high}
\vertii{\uHigh\prth{t}}{L^{1}\prth{\mathbb{R}^{N}}} \leq
e^{-t\eta}\Prth{\vertii{\,\hat{u}_{0}}{L^{1}\prth{\mathbb{R}^{N}}}+\sqrt{\frac{\nu}{4\mu}}\vertii{\,\hat{v}_{\skp\skp\skp 0}}{L^{1}\prth{\mathbb{R}^{N}}}}
\end{equation}
which collapses exponentially fast. For the low frequencies, we similarly have
$$
\vertii{\uLow\prth{t}}{L^{1}\prth{\mathbb{R}^{N}}} =
\frac{1}{2} \int_{\verti{\xi}\leq a}^{}\verti{\Prth{1-\frac{r}{\sqrt{s}}}\hat{u}_{0}\prth{\xi} + \frac{\nu}{\sqrt{s}}\,\hat{v}_{\skp\skp\skp 0}\prth{\xi}}e^{tL\prth{\xi}}d\xi.
$$
Using controls on $L$
\reff{CONTROL_L_low_freq}{%
$L\prth{\xi} \leq - \frac{c\nu+d\mu}{2\prth{\mu+\nu}}\verti{\xi}^{2},
\qquad
\text{as soon as }\verti{\xi}\leq a.$
},
on $1-r/\sqrt{s}$ and ${\nu}/{\sqrt{s}}$
\reff{CONTROL_elts_evanescent_part}{%
$\verti{1\pm\frac{r}{\sqrt{s}}} \leq 2,
\qquad
\frac{\nu}{\sqrt{s}} \leq \sqrt{\frac{\nu}{\mu}},
\qquad
\frac{\mu}{\sqrt{s}} \leq \sqrt{\frac{\mu}{\nu}}.$
},
and
on $\hatt{u}_{0}$ and $\hatt{v}_{\skp\skp\skp 0}$
\reff{CONTROL_hat_u0_v0}{%
$\Big\vert\,\hat{u}_{0}\prth{\xi}\Big\vert\leq
\vertii{u_{0}}{L^{1}\prth{\mathbb{R}^{N}}},
\qquad
\Big\vert\,\hat{v}_{\skp\skp\skp 0}\prth{\xi}\Big\vert\leq 
\vertii{v_{0}}{L^{1}\prth{\mathbb{R}^{N}}}.$
},
we find:
\begin{align}
 	\vertii{\uLow\prth{t}}{L^{1}\prth{\mathbb{R}^{N}}} 
 	&\leq
\Prth{\vertii{u_{0}}{L^{1}\prth{\mathbb{R}^{N}}}+\sqrt{\frac{\nu}{4\mu}}\vertii{v_{0}}{L^{1}\prth{\mathbb{R}^{N}}}}
\int_{\verti{\xi}\leq a}^{}e^{-t\frac{c\nu+d\mu}{2\prth{\mu+\nu}}\verti{\xi}^{2}}d\xi \nonumber\\
	&\leq \Prth{\frac{2\pi\prth{\mu+\nu}}{c\nu+d\mu}}^{N/2}
	\left.\Prth{\vertii{u_{0}}{L^{1}\prth{\mathbb{R}^{N}}}+\sqrt{\frac{\nu}{4\mu}}\vertii{v_{0}}{L^{1}\prth{\mathbb{R}^{N}}}}\right/{t^{N/2}}.\label{CONTROL_hat_u_infty_low}
\end{align}
Finally, we combine
\reff{CONTROL_hat_u_infty_high}{%
$\vertii{\uHigh\prth{t}}{L^{1}\prth{\mathbb{R}^{N}}} \leq
e^{-t\eta}\Prth{\vertii{\,\hat{u}_{0}}{L^{1}\prth{\mathbb{R}^{N}}}+\sqrt{\frac{\nu}{4\mu}}\vertii{\,\hat{v}_{\skp\skp\skp 0}}{L^{1}\prth{\mathbb{R}^{N}}}}.$
}
and
\reff{CONTROL_hat_u_infty_low}{%
$\vertii{\uLow\prth{t}}{L^{1}\prth{\mathbb{R}^{N}}}
\leq \Prth{\frac{2\pi\prth{\mu+\nu}}{c\nu+d\mu}}^{N/2}
\left.\Prth{\vertii{u_{0}}{L^{1}\prth{\mathbb{R}^{N}}}+\sqrt{\frac{\nu}{4\mu}}\vertii{v_{0}}{L^{1}\prth{\mathbb{R}^{N}}}}\right/{t^{N/2}}.$
}
and use Hausdorff-Young inequalities
\reff{INEQUALITY_Hausdorff_Young}{%
$
\begin{array}{l}
	\vertii{f}{L^{\infty}\prth{\mathbb{R}^{N}}} \leq
\prth{2\pi}^{-N}
\vertii{\hat{f}}{L^{1}\prth{\mathbb{R}^{N}}}, \\[2mm]
	\vertii{\hat{f}}{L^{\infty}\prth{\mathbb{R}^{N}}} \leq
\vertii{f}{L^{1}\prth{\mathbb{R}^{N}}}. 
\end{array}
$
}
to control $\u_{\infty}\prth{t}$ in $L^{\infty}\prth{\mathbb{R}^{N}}$.
By returning to\renewcommand{\gap}{-25mm}\renewcommand{\gapp}{-22mm}
\reff{GO_BACK}{%
$\vertii{\,\hat{\u}_{\infty}\prth{t}}{L^{1}\prth{\mathbb{R}^{N}}} \leq
\vertii{\uLow\prth{t}}{L^{1}\prth{\mathbb{R}^{N}}} +
\vertii{\uHigh\prth{t}}{L^{1}\prth{\mathbb{R}^{N}}}.$
},
we can follow the same approach to estimate $\v_{\!\infty}\prth{t}$ as well.
As a consequence, we can identify two positive constants
$\widetilde{\ell}_{\infty}$ and
$\widetilde{\ell'}_{\!\!\infty}$,
which depend on
$N,c,d,\mu,\nu$,
and satisfy the following inequalities for $t > 1$:
\renewcommand{\gap}{-3.5mm}
\begin{equation}\label{CONTROL_u_infty_v_infty_decay_rate_diff}
\!\!\!\begin{array}{ll}
	\vertii{\u_{\infty}\prth{t}}{L^{\infty}\prth{\mathbb{R}^{N}}}
	\leq
	\widetilde{\ell}_{\infty}
	&\hspace{\gap}\Big(
	\vertii{u_{0}}{L^{1}\prth{\mathbb{R}^{N}}}+
	\vertii{v_{0}}{L^{1}\prth{\mathbb{R}^{N}}}+
	\vertii{\?\?\?\?\?\hat{u}_{0}}{L^{1}\prth{\mathbb{R}^{N}}}+
	\vertii{\?\?\?\?\hat{v}_{\skp\skp\skp 0}}{L^{1}\prth{\mathbb{R}^{N}}}\left.\Big)\right/t^{N/2},\\[2.5mm] 
	\vertii{\v_{\!\infty}\prth{t}}{L^{\infty}\prth{\mathbb{R}^{N}}}
	\leq
	\widetilde{\ell'}_{\!\!\infty}
	&\hspace{\gap}\Big(
	\vertii{u_{0}}{L^{1}\prth{\mathbb{R}^{N}}}+
	\vertii{v_{0}}{L^{1}\prth{\mathbb{R}^{N}}}+
	\vertii{\?\?\?\?\?\hat{u}_{0}}{L^{1}\prth{\mathbb{R}^{N}}}+
	\vertii{\?\?\?\?\hat{v}_{\skp\skp\skp 0}}{L^{1}\prth{\mathbb{R}^{N}}}\left.\Big)\right/t^{N/2}.
\end{array}
\end{equation}
To complete the control of $\prth{\u,\v}$ for $t>1$, we combine the estimations on
$\prth{\u_{e},\v_{\!e}}$
\reff{CONTROL_quick_glance_ue_ve_decay_rate_diff}{%
$\begin{array}{ll}
	\vertii{\u_{e}\prth{t}}{L^{\infty}\prth{\mathbb{R}^{N}}}
	\leq
	\widetilde{\ell}_{e}
	&\hspace{\gap}\left.\Prth{\vertii{u_{0}}{L^{1}\prth{\mathbb{R}^{N}}}+\vertii{v_{0}}{L^{1}\prth{\mathbb{R}^{N}}}}\right/t^{N/2}, \\[2.5mm] 
	\vertii{\v_{\!e}\prth{t}}{L^{\infty}\prth{\mathbb{R}^{N}}}
	\leq
	\widetilde{\ell'}_{\!\!e}
	&\hspace{\gap}\left.\Prth{\vertii{u_{0}}{L^{1}\prth{\mathbb{R}^{N}}}+\vertii{v_{0}}{L^{1}\prth{\mathbb{R}^{N}}}}\right/t^{N/2}.
\end{array}$
}
and
$\prth{\u_{\infty},\v_{\!\infty}}$\renewcommand{\gap}{-3.5mm}
\reff{CONTROL_u_infty_v_infty_decay_rate_diff}{%
$
\begin{array}{ll}
	\vertii{\u_{\infty}\prth{t}}{L^{\infty}\prth{\mathbb{R}^{N}}}
	\leq
	\widetilde{\ell}_{\infty}
	&\hspace{\gap}\Big(
	\vertii{u_{0}}{L^{1}\prth{\mathbb{R}^{N}}}+
	\vertii{v_{0}}{L^{1}\prth{\mathbb{R}^{N}}}+
	\vertii{\?\?\?\?\?\hat{u}_{0}}{L^{1}\prth{\mathbb{R}^{N}}}+
	\vertii{\?\?\?\?\hat{v}_{\skp\skp\skp 0}}{L^{1}\prth{\mathbb{R}^{N}}}\left.\Big)\right/t^{N/2},\\[2.5mm] 
	\vertii{\v_{\!\infty}\prth{t}}{L^{\infty}\prth{\mathbb{R}^{N}}}
	\leq
	\widetilde{\ell'}_{\!\!\infty}
	&\hspace{\gap}\Big(
	\vertii{u_{0}}{L^{1}\prth{\mathbb{R}^{N}}}+
	\vertii{v_{0}}{L^{1}\prth{\mathbb{R}^{N}}}+
	\vertii{\?\?\?\?\?\hat{u}_{0}}{L^{1}\prth{\mathbb{R}^{N}}}+
	\vertii{\?\?\?\?\hat{v}_{\skp\skp\skp 0}}{L^{1}\prth{\mathbb{R}^{N}}}\left.\Big)\right/t^{N/2}.
\end{array}$
}
to obtain\renewcommand{\gap}{-3.5mm}
\reff{CONTROL_heristic_uv_decay_rate_diff}{%
$\begin{array}{ll}
	\vertii{\u\prth{t}}{L^{\infty}\prth{\mathbb{R}^{N}}}
	\leq
	\widetilde{\ell}
	&\hspace{\gap}\Big(
	\vertii{u_{0}}{L^{1}\prth{\mathbb{R}^{N}}}+
	\vertii{v_{0}}{L^{1}\prth{\mathbb{R}^{N}}}+
	\vertii{\?\?\?\?\?\hat{u}_{0}}{L^{1}\prth{\mathbb{R}^{N}}}+
	\vertii{\?\?\?\?\hat{v}_{\skp\skp\skp 0}}{L^{1}\prth{\mathbb{R}^{N}}}\left.\Big)\right/t^{N/2}, \\[2.5mm] 
	\vertii{\v\prth{t}}{L^{\infty}\prth{\mathbb{R}^{N}}}
	\leq
	\widetilde{\ell'}
	&\hspace{\gap}\Big(
	\vertii{u_{0}}{L^{1}\prth{\mathbb{R}^{N}}}+
	\vertii{v_{0}}{L^{1}\prth{\mathbb{R}^{N}}}+
	\vertii{\?\?\?\?\?\hat{u}_{0}}{L^{1}\prth{\mathbb{R}^{N}}}+
	\vertii{\?\?\?\?\hat{v}_{\skp\skp\skp 0}}{L^{1}\prth{\mathbb{R}^{N}}}\left.\Big)\right/t^{N/2}.
\end{array}$
}
with
$$
\widetilde{\ell} : =
\widetilde{\ell}_{e}+
\widetilde{\ell}_{\infty}
\qquad 
\text{and}
\qquad 
\widetilde{\ell'} : =
\widetilde{\ell'}_{\!\!e}+
\widetilde{\ell'}_{\!\!\infty}.
$$

Now, for $t\leq 1$, the comparison principle ensures that $\prth{\u,\v}$ stays below the solution
$\prth{\bara{\u},\bara{\v}}$
to
\reff{SYS_heat_exchanger_diff}{%
$\left\lbrace\begin{array}{l}
	\partial_{t}\u = c\Delta \u - \mu \u + \nu\v,\\
	\partial_{t}\v = d\Delta \v + \mu \u - \nu\v.
\end{array}\right.$
}
with initial condition
$\prth{\bara{u}_{\skp\skp 0},\bara{v}_{\!0}}\equiv \prth{\vertii{u_{0}}{L^{\infty}\prth{\mathbb{R}^{N}}},\vertii{v_{0}}{L^{\infty}\prth{\mathbb{R}^{N}}}}$.
Notably,
$\prth{\bara{\u},\bara{\v}\?\?\?}$
actually does not depend on the space variable and therefore solves the ODE system
\begin{equation*}
\renewcommand{\gap}{-3mm}
\left\lbrace
\begin{array}{llll}
	\partial_{t}\bara{\u} = - &\hspace{\gap}\mu\bara{\u} + \nu\bara{\v}, & \qquad & t>0,\\
	\partial_{t}\bara{\v} =   &\hspace{\gap}\mu\bara{\u} - \nu\bara{\v}, & \qquad & t>0.
\end{array}
\right.
\end{equation*}
Letting
$\bara{\sigmaSR} : = \bara{\u}+\bara{\v}$,
we clearly have
$\partial_{t}\bara{\sigmaSR} \equiv 0$,
so that,
\begin{align*}
\max\prth{\u , \v} \leq 
\max\prth{\bara{\u},\bara{\v}} \leq 
\bara{\sigmaSR} &\equiv
\vertii{u_{0}}{L^{\infty}\prth{\mathbb{R}^{N}}}+\vertii{v_{0}}{L^{\infty}\prth{\mathbb{R}^{N}}}\\[1mm]
&\leq
\prth{2\pi}^{-N}
\Big(\vertii{\?\?\?\?\?\hat{u}_{0}}{L^{1}\prth{\mathbb{R}^{N}}}+
\vertii{\?\?\?\?\hat{v}_{\skp\skp\skp 0}}{L^{1}\prth{\mathbb{R}^{N}}}\Big),
\end{align*}
where we used the Hausdorff-Young inequalities
\reff{INEQUALITY_Hausdorff_Young}{%
$
\begin{array}{l}
	\vertii{f}{L^{\infty}\prth{\mathbb{R}^{N}}} \leq
\prth{2\pi}^{-N}
\vertii{\hat{f}}{L^{1}\prth{\mathbb{R}^{N}}}, \\[2mm]
	\vertii{\hat{f}}{L^{\infty}\prth{\mathbb{R}^{N}}} \leq
\vertii{f}{L^{1}\prth{\mathbb{R}^{N}}}. 
\end{array}
$
}
to obtain the last line.
As a consequence, we have,
\renewcommand{\gap}{-3.5mm}
\begin{equation}\label{CONTROL_t_leq_one_uv_decay_rate_diff}
\!\!\begin{array}{ll}
	\vertii{\u\prth{t}}{L^{\infty}\prth{\mathbb{R}^{N}}}
	\leq
	\prth{2\pi}^{-N}
	&\hspace{\gap}\Big(
	\vertii{u_{0}}{L^{1}\prth{\mathbb{R}^{N}}}+
	\vertii{v_{0}}{L^{1}\prth{\mathbb{R}^{N}}}+
	\vertii{\?\?\?\?\?\hat{u}_{0}}{L^{1}\prth{\mathbb{R}^{N}}}+
	\vertii{\?\?\?\?\hat{v}_{\skp\skp\skp 0}}{L^{1}\prth{\mathbb{R}^{N}}}\Big),\\[2.5mm] 
	\vertii{\v\prth{t}}{L^{\infty}\prth{\mathbb{R}^{N}}}
	\leq
	\prth{2\pi}^{-N}
	&\hspace{\gap}\Big(
	\vertii{u_{0}}{L^{1}\prth{\mathbb{R}^{N}}}+
	\vertii{v_{0}}{L^{1}\prth{\mathbb{R}^{N}}}+
	\vertii{\?\?\?\?\?\hat{u}_{0}}{L^{1}\prth{\mathbb{R}^{N}}}+
	\vertii{\?\?\?\?\hat{v}_{\skp\skp\skp 0}}{L^{1}\prth{\mathbb{R}^{N}}}\Big),
\end{array}
\end{equation}
for all $t\leq 1$.

To complete the proof, we need to find suitable values for $\ell$ and $\ell'$ such that\renewcommand{\gap}{-3.5mm}
\reff{CONTROL_uv_decay_rate_diff}{%
$\begin{array}{ll}
	\vertii{\u\prth{t}}{L^{\infty}\prth{\mathbb{R}^{N}}}
	\leq
	\ell
	&\hspace{\gap}\Big(
	\vertii{u_{0}}{L^{1}\prth{\mathbb{R}^{N}}}\skp+\skp
	\vertii{v_{0}}{L^{1}\prth{\mathbb{R}^{N}}}\skp+\skp
	\vertii{\?\?\?\?\?\hat{u}_{0}}{L^{1}\prth{\mathbb{R}^{N}}}\skp+\skp
	\vertii{\?\?\?\?\hat{v}_{\skp\skp\skp 0}}{L^{1}\prth{\mathbb{R}^{N}}}\left.\Big)\right/\prth{1+t}^{N/2},\\[2.5mm] 
	\vertii{\v\prth{t}}{L^{\infty}\prth{\mathbb{R}^{N}}}
	\leq
	\ell'
	&\hspace{\gap}\Big(
	\vertii{u_{0}}{L^{1}\prth{\mathbb{R}^{N}}}\skp+\skp
	\vertii{v_{0}}{L^{1}\prth{\mathbb{R}^{N}}}\skp+\skp
	\vertii{\?\?\?\?\?\hat{u}_{0}}{L^{1}\prth{\mathbb{R}^{N}}}\skp+\skp
	\vertii{\?\?\?\?\hat{v}_{\skp\skp\skp 0}}{L^{1}\prth{\mathbb{R}^{N}}}\left.\Big)\right/\prth{1+t}^{N/2},\\[2.5mm] 
	\text{for all }t>0.&~
\end{array}$
}
satisfies both estimates\renewcommand{\gap}{-3.5mm}
\reff{CONTROL_t_leq_one_uv_decay_rate_diff}{%
$\begin{array}{ll}
	\vertii{\u\prth{t}}{L^{\infty}\prth{\mathbb{R}^{N}}}
	\leq
	\prth{2\pi}^{-N}
	&\hspace{\gap}\Big(
	\vertii{u_{0}}{L^{1}\prth{\mathbb{R}^{N}}}+
	\vertii{v_{0}}{L^{1}\prth{\mathbb{R}^{N}}}+
	\vertii{\?\?\?\?\?\hat{u}_{0}}{L^{1}\prth{\mathbb{R}^{N}}}+
	\vertii{\?\?\?\?\hat{v}_{\skp\skp\skp 0}}{L^{1}\prth{\mathbb{R}^{N}}}\Big),\\[2.5mm] 
	\vertii{\v\prth{t}}{L^{\infty}\prth{\mathbb{R}^{N}}}
	\leq
	\prth{2\pi}^{-N}
	&\hspace{\gap}\Big(
	\vertii{u_{0}}{L^{1}\prth{\mathbb{R}^{N}}}+
	\vertii{v_{0}}{L^{1}\prth{\mathbb{R}^{N}}}+
	\vertii{\?\?\?\?\?\hat{u}_{0}}{L^{1}\prth{\mathbb{R}^{N}}}+
	\vertii{\?\?\?\?\hat{v}_{\skp\skp\skp 0}}{L^{1}\prth{\mathbb{R}^{N}}}\Big).
\end{array}$
}
(for $t\leq 1$) and\renewcommand{\gap}{-3.5mm}
\reff{CONTROL_heristic_uv_decay_rate_diff}{%
$\begin{array}{ll}
	\vertii{\u\prth{t}}{L^{\infty}\prth{\mathbb{R}^{N}}}
	\leq
	\widetilde{\ell}
	&\hspace{\gap}\Big(
	\vertii{u_{0}}{L^{1}\prth{\mathbb{R}^{N}}}+
	\vertii{v_{0}}{L^{1}\prth{\mathbb{R}^{N}}}+
	\vertii{\?\?\?\?\?\hat{u}_{0}}{L^{1}\prth{\mathbb{R}^{N}}}+
	\vertii{\?\?\?\?\hat{v}_{\skp\skp\skp 0}}{L^{1}\prth{\mathbb{R}^{N}}}\left.\Big)\right/t^{N/2}, \\[2.5mm] 
	\vertii{\v\prth{t}}{L^{\infty}\prth{\mathbb{R}^{N}}}
	\leq
	\widetilde{\ell'}
	&\hspace{\gap}\Big(
	\vertii{u_{0}}{L^{1}\prth{\mathbb{R}^{N}}}+
	\vertii{v_{0}}{L^{1}\prth{\mathbb{R}^{N}}}+
	\vertii{\?\?\?\?\?\hat{u}_{0}}{L^{1}\prth{\mathbb{R}^{N}}}+
	\vertii{\?\?\?\?\hat{v}_{\skp\skp\skp 0}}{L^{1}\prth{\mathbb{R}^{N}}}\left.\Big)\right/t^{N/2}.
\end{array}$
}
(for $t>1$).
It can be shown that choosing
$$
\ell : =
2^{N/2}\max\Prth{\prth{2\pi}^{-N} , \widetilde{\ell}\,\?\?\?}
\qquad 
\text{and}
\qquad 
\ell' : =
2^{N/2}\max\Prth{\prth{2\pi}^{-N} , \widetilde{\ell'}\?\?\?}
$$
is the optimal solution, so that these values inherit the parameters dependency specified in
Corollary \ref{CORO_decay_linear_Heat_exchanger}.
\end{proof}

\section{Possible global existence}\label{S4_possible_global_existence}

In this section we show the possible existence of global solutions for problem
\reff{SYS_heat_exchanger}{%
\renewcommand{\gap}{2.155mm}
$
\left\lbrace \begin{array}{l}
	\partial_{t}u = c\Delta u - \mu u + \nu v + \hspace{\gap}u^{1+\p},\\
	\partial_{t}v = d\Delta v + \mu u - \nu v + \kappa v^{1+\q},\\
	\prth{u,v}|_{t=0} = \prth{u_{0},v_{0}}.
\end{array} \right.
$
}
when $N/2$ is larger than both $1/\p$ and $\kappa/\q$, as stated in Theorem \ref{TH_possible_global_existence}.

The proof involves constructing a global super-solution $\prth{\bara{u},\bara{v}}$ for problem
\reff{SYS_heat_exchanger}{%
\renewcommand{\gap}{2.155mm}
$
\left\lbrace \begin{array}{l}
	\partial_{t}u = c\Delta u - \mu u + \nu v + \hspace{\gap}u^{1+\p},\\
	\partial_{t}v = d\Delta v + \mu u - \nu v + \kappa v^{1+\q},\\
	\prth{u,v}|_{t=0} = \prth{u_{0},v_{0}}.
\end{array} \right.
$
}
that drives $u$ and $v$ to $0$.
More precisely, we try
\begin{equation}\label{DEF_guess_sur_sol}
\bara{u}\prth{t,x} : = F\prth{t}\times\u\prth{t,x}
\qquad 
\text{and}
\qquad 
\bara{v}\prth{t,x} : = F\prth{t}\times\v\prth{t,x},
\end{equation}
where $\prth{\u,\v}$ is the solution to the pure diffusive Heat exchanger 
\reff{SYS_heat_exchanger_diff}{%
$\left\lbrace\begin{array}{l}
	\partial_{t}\u = c\Delta \u - \mu \u + \nu\v,\\
	\partial_{t}\v = d\Delta \v + \mu \u - \nu\v.
\end{array}\right.$
}
with initial condition $\prth{u_{0},v_{0}}$, and $F$ is a positive, bounded and continuously differentiable function
to be determined.
We then search for sufficient conditions on $F$ that guarantee $\prth{\bara{u},\bara{v}}$ is a global super-solution.
This leads to
\reff{CONTROL_pour_avoir_existence_globale}{%
$m : = \vertii{u_{0}}{L^{1}\prth{\mathbb{R}^{N}}} + 
\vertii{v_{0}}{L^{1}\prth{\mathbb{R}^{N}}} + 
\vertii{\?\?\?\?\?\hat{u}_{0}}{L^{1}\prth{\mathbb{R}^{N}}}\skp+\skp
\vertii{\?\?\?\?\hat{v}_{\skp\skp\skp 0}}{L^{1}\prth{\mathbb{R}^{N}}}
< m_{0}.$
}
which requires the datum $\prth{u_{0},v_{0}}$ to be small in some sense.
Finally, the upper control $\prth{u,v}\leq \sup\prth{F}\times\prth{\u,\v}$ retrieves
\reff{CONTROL_solution_globale}{%
$\begin{array}{l}
	\vertii{u\prth{t}}{L^{\infty}\prth{\mathbb{R}^{N}}}
\leq
\frac{M}{\prth{1+t}^{N/2}},
	\qquad
	\vertii{v\prth{t}}{L^{\infty}\prth{\mathbb{R}^{N}}}
\leq
\frac{M'}{\prth{1+t}^{N/2}},\\[2mm]
	\text{for all }t>0.
\end{array}$
}.

Observe that our primary focus is on problem
\reff{SYS_heat_exchanger}{%
\renewcommand{\gap}{2.155mm}
$
\left\lbrace \begin{array}{l}
	\partial_{t}u = c\Delta u - \mu u + \nu v + u^{1+\p},\\
	\partial_{t}v = d\Delta v + \mu u - \nu v + v^{1+\q},\\
	\prth{u,v}|_{t=0} = \prth{u_{0},v_{0}}.
\end{array} \right.
$
}$|_{\kappa=1}$
to prove Theorem \ref{TH_possible_global_existence}.
However, we also investigate the simpler case
\reff{SYS_heat_exchanger}{%
\renewcommand{\gap}{2.155mm}
$
\left\lbrace \begin{array}{l}
	\partial_{t}u = c\Delta u - \mu u + \nu v + u^{1+\p},\\
	\partial_{t}v = d\Delta v + \mu u - \nu v,\\
	\prth{u,v}|_{t=0} = \prth{u_{0},v_{0}}.
\end{array} \right.
$
}$|_{\kappa=0}$
to obtain more accurate values for the constants $m_{0}$, $M$ and $M'$ when $\kappa=0$.

\bigskip

\begin{proof}[Proof of Theorem \ref{TH_possible_global_existence}]
Consider $\prth{\bara{u},\bara{v}}$ as defined in
\reff{DEF_guess_sur_sol}{%
$\bara{u}\prth{t,x} : = F\prth{t}\times\u\prth{t,x}
\qquad 
\text{and}
\qquad 
\bara{v}\prth{t,x} : = F\prth{t}\times\v\prth{t,x}.$
}.
First, we set $F\prth{0}=1$ ensuring that
$\prth{\bara{u},\bara{v}}$ and $\prth{u,v}$ have the same initial data.
Next, we require the following expressions to be positive for all $t>0$ and $x\in \mathbb{R}^{N}$:
$$
\partial_{t}\bara{u} - c\Delta \bara{u} + \mu \bara{u} - \nu \bara{v} - \bara{u}^{1+\p}
\qquad 
\text{and}
\qquad 
\partial_{t}\bara{v} - d\Delta \bara{v} - \mu \bara{u} + \nu \bara{v} - \kappa \bara{v}^{1+\q}.
$$
This leads to
\begin{equation}\label{a1}
F'\geq F^{1+\p} \times \u^{\p}
\qquad 
\text{and}
\qquad 
F'\geq \kappa \times F^{1+\q} \times \v^{\?\?\?\q}.
\end{equation}
Note that $\u$ and $\v$ can be replaced in
\reff{a1}{%
$F'\geq F^{1+\p} \times \u^{\p}
\qquad 
\text{and}
\qquad 
F'\geq \kappa \times F^{1+\q} \times \v^{\?\?\?\q}.$
}
by their $L^{\infty}\prth{\mathbb{R}^{N}}$-norms and any control from above of them.
Therefore, with Corollary \ref{CORO_decay_linear_Heat_exchanger} providing uniform controls on $\u$ and $\v$, we can say that satisfying the following inequalities is sufficient to recover
\reff{a1}{%
$F'\geq F^{1+\p} \times \u^{\p}
\qquad 
\text{and}
\qquad 
F'\geq \kappa \times F^{1+\q} \times \v^{\?\?\?\q}.$
}:
\begin{equation}\label{a2}
F'\geq F^{1+\p} \times \Prth{\frac{\ell m}{\prth{1+t}^{N/2}}}^{\p}
\qquad 
\text{and}
\qquad 
F'\geq \kappa \times F^{1+\q} \times \Prth{\frac{\ell' m}{\prth{1+t}^{N/2}}}^{\q}.
\end{equation}
Moving forward, we split the discussion into two parts based on whether $\kappa$ is $0$ or $1$.

\bigskip

\noindent\hspace{-0.5mm}\textbullet\,\textit{The case $\kappa=0$.}
This case is the simplest of the two since it is sufficient to ask
\begin{equation}\label{a3}
F' = F^{1+\p} \times \frac{\prth{\ell m}^{\p}}{\prth{1+t}^{N\p/2}}
\end{equation}
if we require
\reff{a2}{%
$F'\geq F^{1+\p} \times \Prth{\frac{\ell m}{\prth{1+t}^{N/2}}}^{\p}
\qquad 
\text{and}
\qquad 
F'\geq \kappa \times F^{1+\q} \times \Prth{\frac{\ell' m}{\prth{1+t}^{N/2}}}^{\q}$
}.
By solving the ODE
\reff{a3}{%
$F' = F^{1+\p} \times \frac{\prth{\ell m}^{\p}}{\prth{1+t}^{N\p/2}}.$
} with $F\prth{0}=1$,
we obtain
\begin{equation}\label{a4}
F\prth{t} = \Bigg[\underbrace{1 - \frac{2\p\prth{\ell m}^{\p}}{N\p-2}\Prth{1-\frac{1}{\prth{1+t}^{\prth{N\p/2}-1}}}}_{=: G_{0}\prth{t}.}\Bigg]^{-1/\p}
\end{equation}
It remains to ensure that $F$ exists for all times which is permitted if and only if $G_{0}$ does not collide $0$.
To achieve this, we first need to assume that we are in the regime
\reff{PF_possible_extinction}{%
$\frac{N}{2}>
\left\lbrace \begin{array}{ll}
	\frac{1}{\p} &\quad \text{if }\kappa=0, \\[1mm]
	\max\Prth{\frac{1}{\p}, \frac{1}{\q}} &\quad \text{if }\kappa=1. \\
\end{array} \right.$}
to guarantee the vanishing of
$1/\prth{1+t}^{\prth{N\p/2}-1}$ in
\reff{a4}{%
$F\prth{t} = \Bigg[\underbrace{1 - \frac{2\p\prth{\ell m}^{\p}}{N\p-2}\Prth{1-\frac{1}{\prth{1+t}^{\prth{N\p/2}-1}}}}_{\text{Call that }G_{0}\prth{t}}\Bigg]^{-1/\p}$
}.
Then, because
\begin{equation}\label{a4inf}
\inf\limits_{t\geq 0} \crochets{G_{0}\prth{t}} = 1-\frac{2\p\prth{\ell m}^{\p}}{N\p-2},
\end{equation}
it suffices to chose
$$
m<m_{0} : = \Prth{\frac{N\p-2}{2\p\ell^{\p}}}^{1/\p}
$$
to make $G_{0}$ positive and so
$\prth{\bara{u},\bara{v}}$
global.

To eventually retrieve the controls in
\reff{CONTROL_solution_globale}{%
$\begin{array}{l}
	\vertii{u\prth{t}}{L^{\infty}\prth{\mathbb{R}^{N}}}
\leq
\frac{M}{\prth{1+t}^{N/2}},
	\qquad
	\vertii{v\prth{t}}{L^{\infty}\prth{\mathbb{R}^{N}}}
\leq
\frac{M'}{\prth{1+t}^{N/2}},\\[2mm]
	\text{for all }t>0.
\end{array}$
},
we combine
\reff{a4}{%
$F\prth{t} = \Bigg[\underbrace{1 - \frac{2\p\prth{\ell m}^{\p}}{N\p-2}\Prth{1-\frac{1}{\prth{1+t}^{\prth{N\p/2}-1}}}}_{\text{Call that }G_{0}\prth{t}}\Bigg]^{-1/\p}$
}-\reff{a4inf}{%
$\inf\limits_{t\geq 0} \crochets{G_{0}\prth{t}} = 1-\frac{2\p\prth{\ell m}^{\p}}{N\p-2}.$
}
and\renewcommand{\gap}{-3.5mm}
\reff{CONTROL_uv_decay_rate_diff}{%
$\begin{array}{ll}
	\vertii{\u\prth{t}}{L^{\infty}\prth{\mathbb{R}^{N}}}
	\leq
	\ell
	&\hspace{\gap}\Big(
	\vertii{u_{0}}{L^{1}\prth{\mathbb{R}^{N}}}\skp+\skp
	\vertii{v_{0}}{L^{1}\prth{\mathbb{R}^{N}}}\skp+\skp
	\vertii{\?\?\?\?\?\hat{u}_{0}}{L^{1}\prth{\mathbb{R}^{N}}}\skp+\skp
	\vertii{\?\?\?\?\hat{v}_{\skp\skp\skp 0}}{L^{1}\prth{\mathbb{R}^{N}}}\left.\Big)\right/\prth{1+t}^{N/2},\\[2.5mm] 
	\vertii{\v\prth{t}}{L^{\infty}\prth{\mathbb{R}^{N}}}
	\leq
	\ell'
	&\hspace{\gap}\Big(
	\vertii{u_{0}}{L^{1}\prth{\mathbb{R}^{N}}}\skp+\skp
	\vertii{v_{0}}{L^{1}\prth{\mathbb{R}^{N}}}\skp+\skp
	\vertii{\?\?\?\?\?\hat{u}_{0}}{L^{1}\prth{\mathbb{R}^{N}}}\skp+\skp
	\vertii{\?\?\?\?\hat{v}_{\skp\skp\skp 0}}{L^{1}\prth{\mathbb{R}^{N}}}\left.\Big)\right/\prth{1+t}^{N/2},\\[2.5mm] 
	\text{for all }t>0.&~
\end{array}$
}
in
Corollary \ref{CORO_decay_linear_Heat_exchanger}, which lead us to set
$$
M : = \Prth{\frac{\prth{N\p-2}\prth{\ell m}^{\p}}{N\p-2-2\p\prth{\ell m}^{\p}}}^{1/\p}
\qquad 
\text{and}
\qquad 
M' : = \Prth{\frac{\prth{N\p-2}\prth{\ell' m}^{\p}}{N\p-2-2\p\prth{\ell m}^{\p}}}^{1/\p},
$$
completing the proof in this case.

\bigskip

\noindent\hspace{-0.5mm}\textbullet\,\textit{The case $\kappa=1$.}
Similar to the previous case, we aim to set an ODE on $F$,
like
\reff{a3}{%
$F' = F^{1+\p} \times \frac{\prth{\ell m}^{\p}}{\prth{1+t}^{N\p/2}}.$
},
that would satisfy both ODIs in
\reff{a2}{%
$F'\geq F^{1+\p} \times \Prth{\frac{\ell m}{\prth{1+t}^{N/2}}}^{\p}
\qquad 
\text{and}
\qquad 
F'\geq \kappa \times F^{1+\q} \times \Prth{\frac{\ell' m}{\prth{1+t}^{N/2}}}^{\q}$
}.
With the loose assumption $m<1$, it is clear that
$$
\max\crochets{
\Prth{\frac{m}{\prth{1+t}^{N/2}}}^{\p},
\Prth{\frac{m}{\prth{1+t}^{N/2}}}^{\q}\,
}
\leq
\Prth{\frac{m}{\prth{1+t}^{N/2}}}^{\min\prth{\p,\q}}.
$$
Moreover,
\reff{a1}{%
$F'\geq F^{1+\p} \times \u^{\p}
\qquad 
\text{and}
\qquad 
F'\geq \kappa \times F^{1+\q} \times \v^{\?\?\?\q}.$
}
requires $F'$ to be non-negative, so $F\prth{t}\geq F\prth{0} = 1$ implies
$$
\max\Prth{
F^{1+\p},
F^{1+\q}
}
\leq
F^{1+\max\prth{\p,\q}}.
$$
As a result, it is sufficient to ask
\begin{equation}\label{a5}
F' = F^{1+\max\prth{\p,\q}}
\times \max\prth{\ell^{\p},\ell'^{\q}}
\times \Prth{\frac{m}{\prth{1+t}^{N/2}}}^{\min\prth{\p,\q}}
\end{equation}
for $F$ to satisfy
\reff{a2}{%
$F'\geq F^{1+\p} \times \Prth{\frac{\ell m}{\prth{1+t}^{N/2}}}^{\p}
\qquad 
\text{and}
\qquad 
F'\geq \kappa \times F^{1+\q} \times \Prth{\frac{\ell' m}{\prth{1+t}^{N/2}}}^{\q}$
}.
The remainder of the proof is nearly identical to the case $\kappa=0$. Solving
\reff{a5}{%
$F' = F^{1+\max\prth{\p,\q}}
\times \max\prth{\ell^{\p},\ell'^{\q}}
\times \Prth{\frac{m}{\prth{1+t}^{N/2}}}^{\min\prth{\p,\q}}.$
}
yields
\begin{equation}\label{a6}
\!\!\!\!\!\!\!
F\prth{t} \!=\! \Bigg[\underbrace{1 - \frac{2\max\prth{\p,\q}\max\prth{\ell^{\p},\ell'^{\q}}m^{\min\prth{\p,\q}}}{N\min\prth{\p,\q}-2}\Prth{1-\frac{1}{\prth{1+t}^{\prth{N\min\prth{\p,\q}/2}-1}}}}_{ = :G_{1}\prth{t}.}\Bigg]^{-1/\max\prth{\p,\q}}\!\!\!\!\!\!\!\!
\end{equation}
To ensure the global existence of $F$, we first need to be in the regime
\reff{PF_possible_extinction}{%
$\frac{N}{2}>
\left\lbrace \begin{array}{ll}
	\frac{1}{\p} &\quad \text{if }\kappa=0, \\[1mm]
	\max\Prth{\frac{1}{\p}, \frac{1}{\q}} &\quad \text{if }\kappa=1. \\
\end{array} \right.$}
to guarantee the vanishing of
$1/\prth{1+t}^{\prth{N\min\prth{\p,\q}/2}-1}$ in
\reff{a6}{%
$F\prth{t} \!=\! \Bigg[\underbrace{1 - \frac{2\max\prth{\p,\q}\max\prth{\ell^{\p},\ell'^{\q}}m^{\min\prth{\p,\q}}}{N\min\prth{\p,\q}-2}\Prth{1-\frac{1}{\prth{1+t}^{\prth{N\min\prth{\p,\q}/2}-1}}}}_{\text{Call that }G_{1}\prth{t}}\Bigg]^{-1/\max\prth{\p,\q}}$
}.
Then, because
\begin{equation}\label{a6inf}
\inf\limits_{t\geq 0} \crochets{G_{1}\prth{t}} = 1-\frac{2\max\prth{\p,\q}\max\prth{\ell^{\p},\ell'^{\q}}m^{\min\prth{\p,\q}}}{N\min\prth{\p,\q}-2},
\end{equation}
it suffices to chose
$$
m<m_{0} : = \min\crochets{1,\Prth{\frac{N\min\prth{\p,\q}-2}{2\max\prth{\p,\q}\max\prth{\ell^{\p},\ell'^{\q}}}}^{1/\min\prth{\p,\q}}}
$$
to make $G_{1}$ positive and thus
$\prth{\bara{u},\bara{v}}$
global.

It eventually remains to recover the controls in
\reff{CONTROL_solution_globale}{%
$\begin{array}{l}
	\vertii{u\prth{t}}{L^{\infty}\prth{\mathbb{R}^{N}}}
\leq
\frac{M}{\prth{1+t}^{N/2}},
	\qquad
	\vertii{v\prth{t}}{L^{\infty}\prth{\mathbb{R}^{N}}}
\leq
\frac{M'}{\prth{1+t}^{N/2}},\\[2mm]
	\text{for all }t>0.
\end{array}$
}.
To do this, we combine
\reff{a6}{%
$F\prth{t} \!=\! \Bigg[\underbrace{1 - \frac{2\max\prth{\p,\q}\max\prth{\ell^{\p},\ell'^{\q}}m^{\min\prth{\p,\q}}}{N\min\prth{\p,\q}-2}\Prth{1-\frac{1}{\prth{1+t}^{\prth{N\min\prth{\p,\q}/2}-1}}}}_{\text{Call that }G_{1}\prth{t}}\Bigg]^{-1/\max\prth{\p,\q}}$
}-\reff{a6inf}{%
$\inf\limits_{t\geq 0} \crochets{G_{1}\prth{t}} = 1-\frac{2\max\prth{\p,\q}\max\prth{\ell^{\p},\ell'^{\q}}m^{\min\prth{\p,\q}}}{N\min\prth{\p,\q}-2}.$
}
and\renewcommand{\gap}{-3.5mm}
\reff{CONTROL_uv_decay_rate_diff}{%
$\begin{array}{ll}
	\vertii{\u\prth{t}}{L^{\infty}\prth{\mathbb{R}^{N}}}
	\leq
	\ell
	&\hspace{\gap}\Big(
	\vertii{u_{0}}{L^{1}\prth{\mathbb{R}^{N}}}\skp+\skp
	\vertii{v_{0}}{L^{1}\prth{\mathbb{R}^{N}}}\skp+\skp
	\vertii{\?\?\?\?\?\hat{u}_{0}}{L^{1}\prth{\mathbb{R}^{N}}}\skp+\skp
	\vertii{\?\?\?\?\hat{v}_{\skp\skp\skp 0}}{L^{1}\prth{\mathbb{R}^{N}}}\left.\Big)\right/\prth{1+t}^{N/2},\\[2.5mm] 
	\vertii{\v\prth{t}}{L^{\infty}\prth{\mathbb{R}^{N}}}
	\leq
	\ell'
	&\hspace{\gap}\Big(
	\vertii{u_{0}}{L^{1}\prth{\mathbb{R}^{N}}}\skp+\skp
	\vertii{v_{0}}{L^{1}\prth{\mathbb{R}^{N}}}\skp+\skp
	\vertii{\?\?\?\?\?\hat{u}_{0}}{L^{1}\prth{\mathbb{R}^{N}}}\skp+\skp
	\vertii{\?\?\?\?\hat{v}_{\skp\skp\skp 0}}{L^{1}\prth{\mathbb{R}^{N}}}\left.\Big)\right/\prth{1+t}^{N/2},\\[2.5mm] 
	\text{for all }t>0.&~
\end{array}$
}
in Corollary \ref{CORO_decay_linear_Heat_exchanger} which leads us to take
$$
M : = \Prth{\frac{\prth{N\min\prth{\p,\q}-2}\prth{\ell m}^{\max\prth{\p,\q}}}{N\min\prth{\p,\q}-2-2\max\prth{\p,\q}\max\prth{\ell^{\p},\ell'^{\q}}m^{\min\prth{\p,\q}}}}^{1/\max\prth{\p,\q}}
$$
and
$$
M' : = \Prth{\frac{\prth{N\min\prth{\p,\q}-2}\prth{\ell' m}^{\max\prth{\p,\q}}}{N\min\prth{\p,\q}-2-2\max\prth{\p,\q}\max\prth{\ell^{\p},\ell'^{\q}}m^{\min\prth{\p,\q}}}}^{1/\max\prth{\p,\q}}
$$
which concludes the proof of this second case.
\end{proof}

\section{Systematic blow-up}\label{S5_systematic_blow_up}

In this section, we tackle the proof of Theorem \ref{TH_systematic_BU} which states the blowing-up
of the non-negative solutions to
\reff{SYS_heat_exchanger}{%
\renewcommand{\gap}{2.155mm}
$
\left\lbrace \begin{array}{l}
	\partial_{t}u = c\Delta u - \mu u + \nu v + \hspace{\gap}u^{1+\p},\\
	\partial_{t}v = d\Delta v + \mu u - \nu v + \kappa v^{1+\q},\\
	\prth{u,v}|_{t=0} = \prth{u_{0},v_{0}}.
\end{array} \right.
$
}
when at least one of the two $1/\p$ and $\kappa/\q$ is greater than $N/2$.

We cover both cases of $\kappa=0$ and $\kappa=1$ even though the first one would suffice. To do this we need the slight hypothesis $\p<2/N$ which is transparent for the theorem assumptions, up to put $\kappa$ before $u^{1+\p}$ rather than $v^{1+\q}$ whenever $\q<2/N\leq \p$. Notice that $u$ and $v$ exchange their roles in this case.

Our method involves passing the solution
$\prth{u,v}$ through a Gaussian blur whose intensity is adjusted via a positive parameter $\varepsilon$.
As $\varepsilon$ decreases, the blur increases.
We denote the resulting blurred solution observed at point $x=0$ as $\prth{\pazocalU,\pazocal{V}}$.
We first observe that
the blowing-up of $\prth{\pazocalU,\pazocal{V}}$ implies that of
$\prth{u,v}$.
Then, we show that
$\prth{\pazocalU,\pazocal{V}}$ satisfies an ODI system
which ensures its blowing-up if $\varepsilon$ is chosen sufficiently small.

\bigskip

\begin{proof}[Proof of Theorem \ref{TH_systematic_BU}]
For technical reasons we start by adjusting the datum $\prth{u_{0},v_{0}}$
to give it a specific shape.
This step is non-limiting, thanks to the comparison principle.
First, note that (by shifting time if necessary) we can assume that $u_{0}$ and $v_{0}$ are positive.
As a consequence, there exists small enough $\eta, R>0$ such that
$$
\eta\indicatrice{\pazocal{B}\prth{0,R}}\leq u_{0}
\qquad 
\text{and}
\qquad 
\frac{\mu}{2\nu}\times\eta\indicatrice{\pazocal{B}\prth{0,R}}\leq v_{0}
$$
almost everywhere in $\mathbb{R}^{N}$. Therefore, we can assume without loss of generality that the datum
$\prth{u_{0},v_{0}}$ takes the form
\begin{equation}\label{DEF_shaped_data}
\prth{u_{0},v_{0}} \equiv 
\Prth{\eta\indicatrice{\pazocal{B}\prth{0,R}},\frac{\mu}{2\nu}\times\eta\indicatrice{\pazocal{B}\prth{0,R}}}.
\end{equation}

For $\varepsilon>0$, we define the family of Gaussian kernels $\prth{\Phi_{\varepsilon}}_{\varepsilon>0}$ for all $x\in \mathbb{R}^{N}$ as
\begin{equation}\label{DEF_Gaussian_blur}
\Phi_{\varepsilon}\prth{x} : =  C\prth{\varepsilon}\times e^{-\varepsilon\verti{x}^{2}},
\end{equation}
where $C\prth{\varepsilon} : = \prth{\varepsilon/\pi}^{N/2}$ ensures that $\vertii{\Phi_{\varepsilon}}{L^{1}\prth{\mathbb{R}^{N}}}=1$ for any $\varepsilon>0$. By computing for $\lambda>0$,
\begin{align*}
 	\prth{\Delta+\lambda}\Phi_{\varepsilon}
	&= \prth{4\varepsilon^{2}\verti{x}^{2}-2\varepsilon N+\lambda}\Phi_{\varepsilon}\\
 	&\geq \prth{-2\varepsilon N+\lambda}\Phi_{\varepsilon},
\end{align*}
it becomes evident that
\begin{equation}\label{DEF_sous_fonction_propre}
\Delta\Phi_{\varepsilon}\geq -\lambda\Phi_{\varepsilon}
\end{equation}
if we choose
$\lambda = 2N\varepsilon$, an equality that is maintained throughout the proof.
Now we use $\Phi_{\varepsilon}$ to blur the solution $\prth{u,v}$ by convolution, and we denote the result evaluated at time $t$ and point $x=0$ as $\prth{\pazocalU,\pazocal{V}}$.
More precisely,
\renewcommand{\gap}{-3mm}
\renewcommand{\gapp}{-2.5mm}
\begin{equation}\label{DEF_blurred_solutions}
\begin{array}{lll}
	\pazocalU = \pazocalU\prth{t} &\hspace{\gap}: =
\crochets{\Phi_{\varepsilon}\ast u\prth{t}}\prth{0} &\hspace{\gapp}=
\int_{\mathbb{R}^{N}}^{}\Phi_{\varepsilon}\prth{z}
u\prth{t,z}dz, \\[2mm] 
	\pazocal{V} = \pazocal{V}\prth{t} &\hspace{\gap}: =
\crochets{\Phi_{\varepsilon}\ast v\prth{t}}\prth{0} &\hspace{\gapp}=
\int_{\mathbb{R}^{N}}^{}\Phi_{\varepsilon}\prth{z}
v\prth{t,z}dz.
\end{array}
\end{equation}
Since $\Phi_{\varepsilon}$ is chosen with unit mass, we have, while $\prth{u,v}$ exists in
$\prth{L^{\infty}\prth{\mathbb{R}^{N}}}^{2}$,
\begin{equation}\label{HOLDER_blow_up}
\pazocalU\prth{t} +
\pazocal{V}\prth{t} \leq
\vertii{u\prth{t}}{L^{\infty}\prth{\mathbb{R}^{N}}}+
\vertii{v\prth{t}}{L^{\infty}\prth{\mathbb{R}^{N}}}.
\end{equation}
As a consequence of this inequality,
the blowing-up of
$\prth{\pazocalU,\pazocal{V}}$
yields that of
$\prth{u,v}$
---
possibly at an earlier time.

The objective is now to make
$\prth{\pazocalU,\pazocal{V}}$
blow-up.
Differentiating $\pazocalU$ with respect to time yields (the parameters dependency is locally dropped for better visibility)
\begin{align*}
 	\pazocalU'
 	&= \Phi_{\varepsilon}\ast\Prth{c\Delta u - \mu u + \nu v + u^{1+\p}}\\
 	&= c\crochets{\Delta\Phi_{\varepsilon}\ast u} - \mu \crochets{\Phi_{\varepsilon}\ast u} + 
 	\nu \crochets{\Phi_{\varepsilon}\ast v} + [\Phi_{\varepsilon}\ast u^{1+\p}\?\?\?\?]\\
 	&\geq -c\lambda\crochets{\Phi_{\varepsilon}\ast u} - \mu \crochets{\Phi_{\varepsilon}\ast u} + 
 	\nu \crochets{\Phi_{\varepsilon}\ast v} + \crochets{\Phi_{\varepsilon}\ast u}^{1+\p}\\
 	&= -\prth{\mu+c\lambda}\pazocalU + \nu \pazocal{V} + \pazocalU^{1+\p},
\end{align*}
where we use an integration by part to go from the first to the second line and
\reff{DEF_sous_fonction_propre}{%
$\Delta\Phi_{\varepsilon}\geq -\lambda\Phi_{\varepsilon}.$
}
along Jensen inequality
to go from the second to the third line.
Applying the same approach to
$\pazocal{V}$ leads us the following ODI system
\begin{equation}\label{ODI_system_U_V_eps}
\left\lbrace \begin{array}{lll}
	\pazocalU' \geq  -\prth{\mu+c\lambda}\pazocalU + \nu \pazocal{V} + \pazocalU^{1+\p}, & \qquad & t>0,\\
	\pazocal{V}' \geq \mu \pazocalU -\prth{\nu+d\lambda} \pazocal{V} + \kappa \pazocal{V}^{1+\q}, & \qquad & t>0.
\end{array} \right.
\end{equation}
Due to its cooperative structure, the system
\reff{ODI_system_U_V_eps}{%
$\left\lbrace \begin{array}{l}
	\pazocalU' =  -\prth{\mu+c\lambda}\pazocalU + \nu \pazocal{V} + \pazocalU^{1+\p},\\
	\pazocal{V}' = \mu \pazocalU -\prth{\nu+d\lambda} \pazocal{V} + \kappa \pazocal{V}^{1+\q}.\\
\end{array} \right.$
}
enjoys the comparison principle.
As a result, $\pazocalU$ and $\pazocal{V}$ are respectively above $U$ and $V$ which solve the associated ODE system
\begin{equation}\label{ODE_system_U_V}
\left\lbrace \begin{array}{lll}
	U' =  -\prth{\mu+c\lambda}U + \nu V + U^{1+\p}, & \qquad & t>0,\\
	V' = \mu U -\prth{\nu+d\lambda} V, & \qquad & t>0,\\
\end{array} \right.
\end{equation}
as long as $\prth{\pazocalU,\pazocal{V}}$ and $\prth{U,V}$ start from the same initial datum, that is
\begin{equation}\label{DEF_data_U0_V0}
\prth{U\prth{0},V\prth{0}} =
\Prth{
\int_{\mathbb{R}^{N}}^{}\Phi_{\varepsilon}\prth{z}
u_{0}\prth{z}dz,
\int_{\mathbb{R}^{N}}^{}\Phi_{\varepsilon}\prth{z}
v_{0}\prth{z}dz
}
= :
\prth{U_{0},V_{0}}.
\end{equation}

From this point, the rest of the demonstration consists in showing that $\prth{U,V}$ blows-up if the blur parameter $\varepsilon$ $\prth{=\lambda/2N}$ is chosen sufficiently small. In Figure \ref{FIG_plan_de_phases} below, we present the phase plane associated with ODE system
\reff{ODE_system_U_V}{%
$\left\lbrace \begin{array}{l}
	U' =  -\prth{\mu+c\lambda}U + \nu V + U^{1+\p},\\
	V' = \mu U -\prth{\nu+d\lambda} V.\\
\end{array} \right.$
},
and we take this opportunity to introduce some notations that can be understood by looking at Figure \ref{FIG_plan_de_phases}. We begin by finding the isocline curves for system
\reff{ODE_system_U_V}{%
$\left\lbrace \begin{array}{l}
	U' =  -\prth{\mu+c\lambda}U + \nu V + U^{1+\p},\\
	V' = \mu U -\prth{\nu+d\lambda} V.\\
\end{array} \right.$
}
that are
\begin{equation}\label{DEF_isoclines}
\begin{array}{l}
	\acco{U' = 0} = \Acco{V = \frac{U}{\nu}\prth{\mu+c\lambda-U^{\p}}},
\\[1.5mm] 
	\acco{V' = 0} = \Acco{V = \frac{\mu}{\nu+d\lambda} U}.
\end{array}
\end{equation}
These curves intersect each other at the equilibria
$O : = \prth{0,0}$
and
$E_{1} : = \prth{\chi, \frac{\mu}{\nu+d\lambda}\chi}$, where
\begin{equation}\label{DEF_chi}
\chi := \Prth{\mu+c\lambda-\frac{\mu \nu}{\nu+d\lambda}}^{1/\p}.
\end{equation}
Next, we define
$E_{0} : = \prth{\mu^{1/\p},0}$, and for any $\alfa\in \intervalleff{0}{1}$,
$$
E_{\alfa} : =
E_{0} + \alfa \, \overrightarrow{E_{0}E_{1}} =
\Prth{\mu^{1/\p}\prth{1-\alfa}+\alfa\chi , \; \frac{\mu}{\nu+d\lambda}\alfa\chi}.
$$
We consider then the open set $\Omega$ which is the region above $\acco{V=0}$, below $\acco{V'=0}$, and on the right-hand side of the line $\prth{E_{0}E_{1}}$. More precisely,
\begin{equation}\label{DEF_Omega}
\Omega : =
\Acco{V>0}\cap
\Acco{V<\frac{\mu}{\nu+d\lambda} U} \cap
\Acco{V>\frac{\mu\chi}{\prth{\nu+d\lambda}\prth{\chi-\mu^{1/\p}}} \prth{U-\mu^{1/\p}}}.
\end{equation}
Finally, we name $\pazocal{P}$ and $\pazocal{Q}$ the two components of vector field associated with ODE system
\reff{ODE_system_U_V}{%
$\left\lbrace \begin{array}{l}
	U' =  -\prth{\mu+c\lambda}U + \nu V + U^{1+\p},\\
	V' = \mu U -\prth{\nu+d\lambda} V.\\
\end{array} \right.$
},
namely,
$$
\begin{array}{l}
	\pazocal{P}\prth{U,V} : = -\prth{\mu+c\lambda}U + \nu V + U^{1+\p},\\[1mm] 
	\pazocal{Q}\prth{U,V} : = \mu U -\prth{\nu+d\lambda} V,
\end{array}
$$
and we denote $\overrightarrow{\gamma_{\alfa}}$ the evaluation of the field $\prth{\pazocal{P},\pazocal{Q}}$ at point $E_{\alfa}$.\\[-1mm]
\refstepcounter{FIGURE}\label{FIG_plan_de_phases}
\begin{center}
\includegraphics[scale=1]{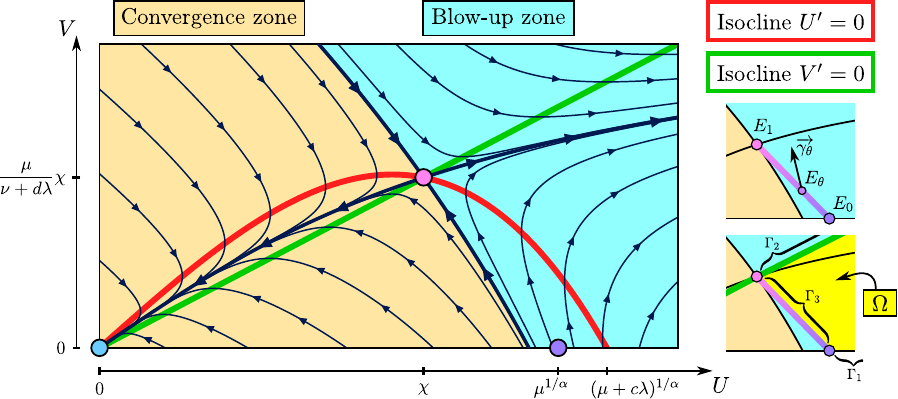}\\[3mm]
\begin{minipage}[c]{14.6079cm}
\textsl{\textbf{Figure \theFIGURE} --- Phase plane associated with ODE system
\reff{ODE_system_U_V}{%
$\left\lbrace \begin{array}{l}
	U' =  -\prth{\mu+c\lambda}U + \nu V + U^{1+\p},\\
	V' = \mu U -\prth{\nu+d\lambda} V.\\
\end{array} \right.$
}.
The equilibria are located at $O = \prth{0,0} = $ \raisebox{-0.35mm}{\includegraphics[scale=1]{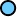}} and $E_{1} = \prth{\chi, \frac{\mu}{\nu+d\lambda}\chi} = $ \raisebox{-0.35mm}{\includegraphics[scale=1]{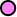}}. The plane splits into two zones depending on whether the trajectories converge to $O$ or blow-up in finite time. As $\lambda$ approaches zero, the equilibrium $E_{1}$ moves towards $O$ until they merge at the limit, and the blow-up zone \glmt{fills} the area below the isocline $\acco{V'=0}$, which is defined by $\acco{V'>0}$.
To characterize the blow-up zone, we define $E_{0} = \prth{\mu^{1/\p}, 0} = $ \raisebox{-0.35mm}{\includegraphics[scale=1]{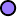}}, and we show that all the points in the region $\Omega$, defined as the intersection of the right-hand side of $\prth{E_{0}E_{1}}$, the upper side of $\acco{V=0}$ and the lower side of $\acco{V'=0}$ are indeed in the blow-up zone provided that $\lambda$ has been sufficiently reduced.}
\end{minipage}
\end{center}

\bigskip

The continuation of our discussion is divided into three steps:
\begin{itemize}[label=\textbullet]
\refstepcounter{ProofTHBUS}\label{ProofTHBUS_i}
	\item [$(\theProofTHBUS)$] We show that by choosing
	$\lambda$ $\prth{=2N\varepsilon}$
	smaller, the region $\Omega$ remains stable under ODE system
	\reff{ODE_system_U_V}{%
$\left\lbrace \begin{array}{l}
	U' =  -\prth{\mu+c\lambda}U + \nu V + U^{1+\p},\\
	V' = \mu U -\prth{\nu+d\lambda} V..
\end{array} \right.$
}.
\refstepcounter{ProofTHBUS}\label{ProofTHBUS_ii}\vspace{-1mm}
	\item [$(\theProofTHBUS)$] We deduce that any solution to
	\reff{ODE_system_U_V}{%
$\left\lbrace \begin{array}{l}
	U' =  -\prth{\mu+c\lambda}U + \nu V + U^{1+\p},\\
	V' = \mu U -\prth{\nu+d\lambda} V.\\
\end{array} \right.$
}
	starting at initial time in $\Omega$ blows-up in finite time.
\refstepcounter{ProofTHBUS}\label{ProofTHBUS_iii}
	\item [$(\theProofTHBUS)$] Finally, we find an $\varepsilon>0$ that places $\prth{U_{0},V_{0}}$ in $\Omega$, leading to the blowing-up of $\prth{U,V}$.
\end{itemize}

\bigskip

\noindent\hspace{-0.5mm}\textbullet\,\textit{Step (\ref{ProofTHBUS_i}).} To demonstrate that $\Omega$ is stable under the ODE system
\reff{ODE_system_U_V}{%
$\left\lbrace \begin{array}{l}
	U' =  -\prth{\mu+c\lambda}U + \nu V + U^{1+\p},\\
	V' = \mu U -\prth{\nu+d\lambda} V.\\
\end{array} \right.$
},
we must establish that the vector field
$\prth{\pazocal{P},\pazocal{Q}}|_{\partial \Omega}$ points inwards $\Omega$. We decompose $\partial \Omega$ into
$\Gamma_{1} \cup \Gamma_{2} \cup \Gamma_{3}$, as illustrated in Figure \ref{FIG_plan_de_phases}:
$$
\partial \Omega = \underbrace{\acco{U>\mu^{1/\p},V=0}}_{\Gamma_{1}} \cup \underbrace{\acco{U>\chi, V'=0}}_{\Gamma_{2}} \cup \underbrace{\crochets{E_{0}E_{1}}}_{\Gamma_{3}}.
$$
We observe that
$\prth{\pazocal{P},\pazocal{Q}}|_{\Gamma_{1}}$
as well as
$\prth{\pazocal{P},\pazocal{Q}}|_{\Gamma_{2}}$
both point in the correct direction since
\begin{itemize}[label=\textbullet]
	\item $\pazocal{Q}>0$ along  
$\Gamma_{1}$, and
	\item $\pazocal{Q}=0<\pazocal{P}$ along
$\Gamma_{2}$.
\end{itemize}
To address the last portion of $\partial\Omega$, which is $\Gamma_{3} = \crochets{E_{0}E_{1}}$, we define the matrices
$$
\pazocal{M}_{\alfa} : =
\begin{pmatrix}
\overrightarrow{\gamma_{\alfa}} & \overrightarrow{E_{0}E_{1}}
\end{pmatrix},
$$
and our goal is to prove $det\prth{\pazocal{M}_{\alfa}}\geq 0$ for all $\alfa\in \intervalleff{0}{1}$,
indicating that
$\overrightarrow{\gamma_{\alfa}}$
points towards the right-hand side of
$\overrightarrow{E_{0}E_{1}}$.
Without going into the details of algebraic computations, we find
\begin{multline*}
\hspace{-2.725mm}det\prth{\pazocal{M}_{\alfa}} =
	\frac{\mu}{\nu+d\lambda}\chi
	\crochets{-\Big(\mu+c\lambda\Big)\Big(\mu^{1/\p}\prth{1-\alfa}+\alfa\chi\Big)
	+\frac{\alfa\mu\nu}{\nu+d\lambda}\chi+\Big(\mu^{1/\p}\prth{1-\alfa}+\alfa\chi\Big)^{1+\p}\,}\\
	+\Big(\mu^{1/\p}-\chi\Big)\mu^{1+1/\p}\Big(1-\alfa\Big),
\end{multline*}
where we can verify that $det\prth{\pazocal{M}_{1}} = 0$, what is consistent with
$\overrightarrow{\gamma_{1}} = \prth{0,0}$
at the equilibrium $E_{1}$.
Now, differentiating $det\prth{\pazocal{M}_{\alfa}}$ with respect to $\alfa$ yields
\begin{multline*}
\hspace{-3,4mm}\partial_{\alfa}\,det\prth{\pazocal{M}_{\alfa}} \!=\!
	\frac{\mu}{\nu+d\lambda}\chi
	\bigg[\Big(\mu+c\lambda\Big)\!\Big(\mu^{1/\p}-\chi\Big)
	+\frac{\mu\nu}{\nu+d\lambda}\chi
	-\Big(1+\p\Big)\!\Big(\mu^{1+\p}-\chi\Big)\!\Big(\mu^{1+\p}-\alfa\prth{\mu^{1+\p}-\chi}\Big)^{\p}\bigg]\\
	-\mu^{1+1/\p}\Big(\mu^{1/\p}-\chi\Big),
\end{multline*}
and by considering that $\chi$, defined in
\reff{DEF_chi}{%
$\chi := \Prth{\mu+c\lambda-\frac{\mu \nu}{\nu+d\lambda}}^{1/\p}.$
},
vanishes as $\lambda$ approaches $0$, we have
$$
\lim\limits_{\lambda \rightarrow 0}
\Prth{
\sup\limits_{\theta\in \intervalleff{0}{1}}
\verti{\partial_{\alfa}\,det\prth{\pazocal{M}_{\alfa}} + \mu^{1+2/\p}}
}
= 0.
$$
From this we deduce that there exists a positive $\lambda_{0}$ such that $\partial_{\alfa}\,det\prth{\pazocal{M}_{\alfa}}<0$ for all
$\lambda\in\intervalleoo{0}{\lambda_{0}}$
and all
$\theta\in \intervalleff{0}{1}$.
This implies
$$
det\prth{\pazocal{M}_{\alfa}} >
det\prth{\pazocal{M}_{1}}=0,
$$
which concludes the step.

\bigskip

\noindent\hspace{-0.5mm}\textbullet\,\textit{Step (\ref{ProofTHBUS_ii}).}
Suppose we have
$\prth{U_{0},V_{0}}\in \Omega$. Then, thanks to step
$(\ref{ProofTHBUS_i})$, we know that
$\prth{U,V}$ remains in $\Omega$ as long as it exists.
Observe that $\pazocal{Q}>0$ in $\Omega$ which causes $V$ to increases.
Since the set
$
\Omega\?\cap\?\acco{U'<0}
$
does not contain any asymptotically stable trajectories, there exists a time
$t_{0}>0$
for which $\prth{U,V}$ crosses the curve $\acco{U'=0}$. After this point,
$$
\prth{U,V} \in \Omega\cap\acco{U'>0},
\qquad
\text{for all }t\in \intervalleoo{t_{0}}{T},
$$
where $T$ denotes the lifetime of $\prth{U,V}$.
From the moment $t_{0}$, $U$ is therefore increasing.
If $U$ were bounded, it would converge, so would $V$ in view of the phase plane, which is impossible.
Consequently, $U$ is unbounded and we can find a time $t_{1}>t_{0}$
that is large enough to perform
$$
-\prth{\mu+c\lambda}U + U^{1+\p} >
\frac{1}{2} U^{1+\p},
\qquad
\text{for all }t\in \intervalleoo{t_{1}}{T}.
$$
Finally, by plugging the latter inequality in first line of
\reff{ODE_system_U_V}{%
$\left\lbrace \begin{array}{l}
	U' =  -\prth{\mu+c\lambda}U + \nu V + U^{1+\p},\\
	V' = \mu U -\prth{\nu+d\lambda} V.\\
\end{array} \right.$
},
we find
$$
U'
>
\frac{1}{2} U^{1+\p} + \nu V
>
\frac{1}{2} U^{1+\p},
\qquad
\text{for all }t\in \intervalleoo{t_{1}}{T},
$$
from which the blowing-up of $U$ is evident.

\bigskip

\noindent\hspace{-0.5mm}\textbullet\,\textit{Step (\ref{ProofTHBUS_iii}).}
The aim of this step is to show that
$\prth{U_{0},V_{0}}$ is in $\Omega$
if we reduce $\varepsilon$ $\prth{=\lambda/2N}$. To achieve this, we must show that
$\prth{U_{0},V_{0}}$
belongs to each set constituting the intersection $\Omega$ in
\reff{DEF_Omega}{%
$\Omega : =
\Acco{V>0}\cap
\Acco{V<\frac{\mu}{\nu+d\lambda} U} \cap
\Acco{V>\frac{\mu\chi}{\prth{\nu+d\lambda}\prth{\chi-\mu^{1/\p}}} \prth{U-\mu^{1/\p}}}.$
}.

At the beginning of the proof, we assume that the datum
$\prth{u_{0},v_{0}}$
takes the form given in
\reff{DEF_shaped_data}{%
$\prth{u_{0},v_{0}} \equiv 
\Prth{\eta\indicatrice{\pazocal{B}\prth{0,R}},\frac{\mu}{2\nu}\times\eta\indicatrice{\pazocal{B}\prth{0,R}}}$
},
where we have
$v_{0} = \frac{\mu}{2\nu}u_{0}$.
Observing the definition of $\prth{U_{0},V_{0}}$ in
\reff{DEF_data_U0_V0}{%
$\prth{U\prth{0},V\prth{0}} =
\Prth{
\int_{\mathbb{R}^{N}}^{}\Phi_{\varepsilon}\prth{z}
u_{0}\prth{z}dz,
\int_{\mathbb{R}^{N}}^{}\Phi_{\varepsilon}\prth{z}
v_{0}\prth{z}dz
}
= :
\prth{U_{0},V_{0}}.$
}
reveals that
$V_{0} = \frac{\mu}{2\nu}U_{0}$ as well.
Therefore, on the phase plane of
Figure \ref{FIG_plan_de_phases},
the datum $\prth{U_{0},V_{0}}$ is located on the line
$\acco{V = \frac{\mu}{2\nu}U}$ which lies below the isocline $\acco{V' = 0}$ if $\lambda$ is chosen sufficiently small --- check
\reff{DEF_isoclines}{%
$\begin{array}{l}
	\acco{U' = 0} = \Acco{V = \frac{U}{\nu}\prth{\mu+c\lambda-U^{\p}}},
\\[1.5mm] 
	\acco{V' = 0} = \Acco{V = \frac{\mu}{\nu+d\lambda} U}.
\end{array}$
}
for confirmation. As a result, there exists $\lambda_{1}>0$ such that
\begin{equation}\label{BEING_IN_OMEGA_1}
\prth{U_{0},V_{0}}\in
\Acco{V<\frac{\mu}{\nu+d\lambda} U},
\qquad
\text{for all }\lambda<\lambda_{1}.
\end{equation}

Next, to place $\prth{U_{0},V_{0}}$ on the right-hand side of
$\prth{E_{0}E_{1}}$,
we must ensure that
\begin{equation}\label{PROOF_BU_1}
V_{0}>\frac{\mu\chi}{\prth{\nu+d\lambda}\prth{\chi-\mu^{1/\p}}} \prth{U_{0}-\mu^{1/\p}}.
\end{equation}
Since $\chi$ vanishes as $\lambda$ approaches to $0$, we may check that the quantity
$\frac{\mu\chi}{\prth{\nu+d\lambda}\prth{\chi-\mu^{1/\p}}}$
is negative if $\lambda$ is sufficiently small. Therefore,
\reff{PROOF_BU_1}{%
$V_{0}>\frac{\mu\chi}{\prth{\nu+d\lambda}\prth{\chi-\mu^{1/\p}}} \prth{U_{0}-\mu^{1/\p}}.$
}
is satisfied if
$$
V_{0}>\frac{\mu^{1+1/\p}\chi}{\prth{\nu+d\lambda}\prth{\mu^{1/\p}-\chi}}.
$$
We can also remove $d\lambda$ and simply require
\begin{align*}	
	V_{0} &>\frac{\mu^{1+1/\p}\chi}{\nu\prth{\mu^{1/\p}-\chi}}\\[2mm]
	&=\frac{\mu}{\nu}\chi + o\prth{\chi}.
\end{align*}
By further reducing $\lambda$ (and consequently $\chi$), it is thus sufficient to have
\begin{align*}
 	V_{0} &> \frac{2\mu}{\nu}\chi\\
	&= \frac{2\mu}{\nu}
\crochets{\mu+c\lambda-\frac{\mu \nu}{\nu+d\lambda}}^{1/\p}\\
	&= \frac{2\mu}{\nu}
\crochets{\Prth{c+\frac{d\mu}{\nu}}\lambda + o\prth{\lambda}}^{1/\p}
\end{align*}
which we can simplified in
\begin{equation}\label{PROOF_BU_2}
V_{0} >
\underbrace{\frac{2^{1+1/\p}\mu}{\nu}\Prth{c+\frac{d\mu}{\nu}}^{1/\p}}_{= : h} \lambda^{1/\p}.
\end{equation}
Returning to the definition of $V_{0}$ in
\reff{DEF_data_U0_V0}{%
$\prth{U\prth{0},V\prth{0}} =
\Prth{
\int_{\mathbb{R}^{N}}^{}\Phi_{\varepsilon}\prth{z}
u_{0}\prth{z}dz,
\int_{\mathbb{R}^{N}}^{}\Phi_{\varepsilon}\prth{z}
v_{0}\prth{z}dz
}
= :
\prth{U_{0},V_{0}}.$
}
with $v_{0}$ in
\reff{DEF_shaped_data}{%
$\prth{u_{0},v_{0}} \equiv 
\Prth{\eta\indicatrice{\pazocal{B}\prth{0,R}},\frac{\mu}{2\nu}\times\eta\indicatrice{\pazocal{B}\prth{0,R}}}$
}
and $\Phi_{\varepsilon}$ in
\reff{DEF_Gaussian_blur}{%
$\Phi_{\varepsilon}\prth{x} : =  C\prth{\varepsilon}\times e^{-\varepsilon\verti{x}^{2}}.$
},
the latter inequality
\reff{PROOF_BU_2}{%
$V_{0} >
\underbrace{\frac{2^{1+1/\p}\mu}{\nu}\Prth{c+\frac{d\mu}{\nu}}^{1/\p}}_{= : h} \lambda^{1/\p}$
}
can be expressed as
$$
\Prth{\frac{\varepsilon}{\pi}}^{N/2}
\frac{\eta\mu}{2\nu}
\int_{\pazocal{B}\prth{0,R}}^{}
e^{-\varepsilon\verti{z}^{2}}
dz >
h\Prth{2N\varepsilon}^{1/\p},
$$
or with an adequate constant $\widetilde{h}$ that depends on
$N,c,d,\mu,\nu,\p,\eta$, 
\begin{equation}\label{PROOF_BU_3}
\int_{\pazocal{B}\prth{0,R}}^{}
e^{-\varepsilon\verti{z}^{2}}
dz >
\widetilde{h} \times \varepsilon^{\prth{1/\p}-\prth{N/2}}.
\end{equation}
Now as $\varepsilon$ approaches $0$, the left-hand side of
\reff{PROOF_BU_3}{%
$\int_{\pazocal{B}\prth{0,R}}^{}
e^{-\varepsilon\verti{z}^{2}}
dz >
\widetilde{h} \times \varepsilon^{\prth{1/\p}-\prth{N/2}}.$
}
converges towards the Lebesgue measure of the ball $\pazocal{B}\prth{0,R}$ that is positive.
Meanwhile, on the right-hand side, 
$\varepsilon^{\prth{1/\p}-\prth{N/2}}$ collapses since $\p$ has been chosen smaller than $2/N$. As a result, by taking
$\varepsilon$ $\prth{=\lambda/2N}$
sufficiently small, we can achieve
\reff{PROOF_BU_3}{%
$\int_{\pazocal{B}\prth{0,R}}^{}
e^{-\varepsilon\verti{z}^{2}}
dz >
\widetilde{h} \times \varepsilon^{\prth{1/\p}-\prth{N/2}}.$
}
and thus 
\reff{PROOF_BU_1}{%
$V_{0}>\frac{\mu\chi}{\prth{\nu+d\lambda}\prth{\chi-\mu^{1/\p}}} \prth{U_{0}-\mu^{1/\p}}.$
},
meaning there exists $\lambda_{2}>0$ such that
\begin{equation}\label{BEING_IN_OMEGA_2}
\prth{U_{0},V_{0}}\in
\Acco{V>\frac{\mu\chi}{\prth{\nu+d\lambda}\prth{\chi-\mu^{1/\p}}} \prth{U-\mu^{1/\p}}},
\qquad
\text{for all }\lambda\in\intervalleoo{0}{\lambda_{2}}.
\end{equation}

Lastly, noting the positivity of $V_{0}$ and considering
\reff{DEF_Omega}{%
$\Omega : =
\Acco{V>0}\cap
\Acco{V<\frac{\mu}{\nu+d\lambda} U} \cap
\Acco{V>\frac{\mu\chi}{\prth{\nu+d\lambda}\prth{\chi-\mu^{1/\p}}} \prth{U-\mu^{1/\p}}}.$
},
\reff{BEING_IN_OMEGA_1}{%
$\prth{U_{0},V_{0}}\in
\Acco{V<\frac{\mu}{\nu+d\lambda} U},
\qquad
\text{for all }\lambda<\lambda_{1}.$
}
and
\reff{BEING_IN_OMEGA_2}{%
$\prth{U_{0},V_{0}}\in
\Acco{V>\frac{\mu\chi}{\prth{\nu+d\lambda}\prth{\chi-\mu^{1/\p}}} \prth{U-\mu^{1/\p}}},
\qquad
\text{for all }\lambda\in\intervalleoo{0}{\lambda_{2}}.$
},
we can conclude that there exists a positive $\lambda$ $\prth{=2N\varepsilon}$ for which
$\prth{U_{0},V_{0}}\in\Omega$.

\bigskip

Gathering the elements of the proof from steps
(\ref{ProofTHBUS_i}),
(\ref{ProofTHBUS_ii}) and
(\ref{ProofTHBUS_iii}),
we demonstrated that we can find a positive $\varepsilon$ for which $\prth{U,V}$ blows-up in finite time, consequently inducing the blowing-up of
$\prth{\pazocalU,\pazocal{V}}$
and
$\prth{u,v}$.
This concludes the proof.
\end{proof}

\bigskip

\noindent\textbf{Acknowledgement.}
The author would like to acknowledge his supervisor
\href{https://alfaro.perso.math.cnrs.fr/}{Matthieu Alfaro}
for his advices and his reviews,
\href{https://www.math.univ-paris13.fr/~souplet/}{Philippe Souplet}
for pointing out the references
\citeee{CuiGlobal98}, 
\citeee{SoupletOptimal04}, 
\citeee{CastilloCritical15}, 
the \href{https://lmrs.univ-rouen.fr/en}{\textit{Laboratoire de Mathématiques Raphaël Salem}} for the Maple\texttrademark ~licence which helped to investigate around linear system
\reff{ODE_Diffusion_Fourier_side}{%
$\partial_{t} \begin{pmatrix} \,\hat{\u}\, \\ \,\hat{\v}\, \end{pmatrix} =
\begin{pmatrix}
-c\verti{\xi}^{2} - \mu & \nu \\ 
\mu & -d\verti{\xi}^{2} - \nu
\end{pmatrix}
\begin{pmatrix} \,\hat{\u}\, \\ \,\hat{\v}\, \end{pmatrix}.$
},
and the {\it Région Normandie} for the financial support of his PhD.

\printbibliography


\newpage
\thispagestyle{empty}

\end{document}